\definecolor{green1}{rgb}{0,0.314,0} 
\definecolor{brown1}{rgb}{0.545,0.270,0.0745} 
\definecolor{Saddle}{HTML}{8b4513}
\definecolor{NewGrey}{HTML}{85888D}
\newtheorem{theorem}{Theorem}[section]
\newcommand{\cP}{\mathcal{P}}
\renewcommand{\epsilon}{\varepsilon}
\renewcommand{\leq}{\leqslant}
\renewcommand{\geq}{\geqslant}
\setlist[enumerate]{leftmargin=.5in}
\setlist[itemize]{leftmargin=.5in}
\begin{document}
\title{Dynamics of a Mathematical Hematopoietic Stem-Cell Population Model}
\author{Daniel C. De Souza}
\email[]{daniel.desouza@ed.ac.uk}
\affiliation{Department of Mathematics and Statistics, McGill University,
Montreal, Quebec, H3A 0B9, Canada.}
\affiliation{Currently at Institute of Immunology and Infection Research, University of Edinburgh, Ashworth Labs, Edinburgh, EH9 3JT, Scotland.}

\author{Antony R. Humphries}
\email[]{tony.humphries@mcgill.ca}
\homepage[]{http://www.math.mcgill.ca/humphries/}
\affiliation{Department of Mathematics and Statistics,
McGill University, Montreal, Quebec, H3A 0B9, Canada}
\affiliation{Department of Physiology, McGill University,
Montreal, Quebec, H3G 1Y6, Canada}

\date{\today}

\begin{abstract}
We explore the bifurcations and dynamics of a scalar differential equation with a single constant delay which models the population of human hematopoietic stem cells in the bone marrow. One parameter continuation reveals that with a delay of just a few days,
stable periodic dynamics can be generated of all periods from about one week up to one decade!
The long period orbits seem to be generated by several mechanisms, one of which is a canard explosion, for which we approximate the dynamics near the slow manifold.
Two-parameter continuation reveals parameter regions with even more exotic dynamics including quasi-periodic and phase-locked tori, and chaotic solutions. The
panoply of dynamics that we find in the model
demonstrates that instability in the stem cell dynamics
could be sufficient to generate the
rich behaviour seen in dynamic hematological diseases.
\end{abstract}


\maketitle


\section{Introduction}\label{sec.introduction}

We study the dynamics and bifurcations of the delay-differential equation (DDE)
\begin{equation}\label{Qprime}
Q^{\prime}(t) = -(\kappa+\beta(Q(t)))Q(t)+A\beta(Q(t-\tau))Q(t-\tau),
\end{equation}
where $Q(t)\geq0$ represents the concentration of hematopoietic stem cells (HSCs) in the
bone marrow, $A\in(1,2)$ is the amplification factor for cells undergoing division, $\tau>0$
is the division time, the rate at which cells enter division, $\beta(Q)$, is a monotonically
decreasing function of $Q$ with $\lim_{Q\to\infty}\beta(Q)=0$, and $\kappa>0$ is the rate that
the stem cells differentiate to the progenitors of circulating blood cells.

The DDE~\eqref{Qprime} represents the $G_0$ cell proliferation model
of Burns and Tannock~\cite{Burns_1970}.
A full derivation of the DDE can be found in Mackey and Rudnicki~\cite{Mackey_1994}, though it
was first stated in the form~\eqref{Qprime} in Mackey~\cite{Mackey_1978}.
In Bernard \textit{et al}~\cite{Bernard_2003} equation~\eqref{Qprime}
was used to describe the stem cell dynamics as one component of a larger model describing the regulation of circulating
neutrophil concentrations.
Since then many mathematical models have appeared which contain~\eqref{Qprime}, or a variant, within
larger hematopoiesis models for the production and regulation of neutrophils, erythrocytes and platelets~\cite{Adimy_2006a,Colijn_2005a,Colijn_2005b,Craig_2016,Langlois2017}.
Multiple versions of~\eqref{Qprime} have
also been coupled together to model discrete levels of stem cell maturity~\cite{Adimy_2006c,Qu_2010}.

The human hematopoietic system produces about $10^{11}$ blood cells of various
types per day~\cite{Kaushansky_2016}, of which erythrocytes (red blood cells), neutrophils
(a type of white blood cell) and platelets are the most common, in a production process
which is tightly regulated by a myriad of feedback loops. In dynamical diseases including cyclic neutropenia (CN), cyclic thrombocytopenia (CT) and
periodic chronic myelogenous leukemia (PCML), oscillations are observed
in the circulating concentrations of one or more of the cell lines~\cite{Foley_2009a}.

In mathematical models of hematopoiesis these oscillations arise through Hopf bifurcations as one or more parameters are varied in the model. Two principle mechanisms have been proposed to drive the oscillations in different dynamical diseases~\cite{Bernard_2003}. There can be an instability in the production of the HSCs themselves, with the oscillating HSC numbers then leading to oscillations in the concentrations of peripheral cells. This occurs in PCML where leukemic HSCs typically present a chromosome abnormality~\cite{Pujo_Menjouet_2005}. An
alternative mechanism is that oscillations in one cell line can be created through an abnormality in the production of precursor cells in that cell lineage, with the feedback loops from that lineage causing oscillations in the numbers of HSCs differentiating into other cell lines creating concomitant oscillations in the other cell lineages. This occurs in CN, for which a mutation in the ELANE gene that encodes neutrophil
elastase leads to increased apoptosis in the neutrophil
progenitor cells during mitosis~\cite{Dale_2002}.

In other dynamical diseases it remains an open question whether the oscillations
are driven by an inherent instability in the HSCs, or whether an instability
in production of one of the blood cell
lineages is creating the oscillations seen in circulating concentrations.
The second possibility is difficult to investigate directly, due to the complexity of the
hematopoietic models, with for example the granulopoiesis model of Craig \textit{et al}~\cite{Craig_2016} having five equations, over twenty parameters, and state-dependent delays.
In the current
work, motivated by the first possibility, we investigate the dynamics of the simple HSC model~\eqref{Qprime} as parameters are varied and explore the dynamics that arise. Such an approach alone will not definitively answer the question
of whether or not the oscillations in specific dynamical diseases
are driven by inherent instability in the HSC dynamics.
However, equation~\eqref{Qprime} is often incorporated in more
complicated hematopoietic models, and if the HSCs can oscillate in the decoupled equation~\eqref{Qprime},
these oscillations could drive oscillations in the production rates of the mature circulating blood cells which are all produced from the HSCs. Thus, studying the dynamics of equation~\eqref{Qprime} allows us to determine when instabilities in the HSC dynamics may arise, and hence if it is feasible for these to drive oscillations in the circulating blood cells concentrations.

Although~\eqref{Qprime} has been used and studied in numerous models, the codimension-one bifurcation analysis is incomplete, and little is known about codimension-two bifurcations.
In Section~\ref{sec.decoup.hsc} we review the model~\eqref{Qprime} and its basic dynamical properties including existence and positivity of solutions, non-dimensionalised formulation, homeostasis (the stable state of an organism maintained by physiological processes) parameter values and existence and stability of steady states.
In Section~\ref{sec.stab.bound} we discuss the stability boundary of the steady state with respect to the delay $\tau$.

In Section~\ref{sec.bifurc.hsc} we carry out a
numerical bifurcation analysis of Eq.~\eqref{Qprime} by
performing parameter continuation on solutions as three of the parameters that control the dynamics are varied individually or pairwise. The one parameter continuations reveal sub- and supercritical Hopf bifurcations, fold bifurcations of periodic orbits and period-doubling bifurcations. This results in bistability between a stable periodic orbit and a steady state, and bistability of two periodic orbits. We also find limit cycles of periods ranging from a week to over 9 years, and an apparent canard explosion~\cite{Benoit1981}. The two-parameter continuations allow us to map out the curves of Hopf, period-doubling and fold bifurcations to determine regions of parameter space for which interesting dynamics occur, and also reveal torus (or Neimark-Sacker) bifurcations.

In the following sections we explore some of the more interesting dynamics in more detail.
In Section~\ref{sec.longp.hsc} we study a canard explosion for which the period of solutions increases from about 50 days to over 700 days over an exponentially small parameter interval. We show how to approximate the slow manifold associated with these solutions and show that this manifold has both stable and unstable components.
In Section~\ref{sec.chaos.hsc} we consider non-periodic and chaotic solutions.
First in Section~\ref{sec.torus} we investigate the torus bifurcations found in Section~\ref{sec.bifurc.hsc}, and find a stable invariant torus in the dynamics. We compute Lyapunov exponents and a Poincar\'e section to show that the dynamics do indeed correspond to a quasi-periodic orbit which envelopes the unstable-periodic orbit from which the torus bifurcated. We also find parameter values for which there is phase-locking on the torus and present the resulting stable periodic orbits.
In Section~\ref{sec.chaos} we study the dynamics between the period-doublings and find period doubling cascades leading to chaos. The chaotic nature of the dynamics is
verified numerically by showing that the leading Lyapunov exponent is positive, and visualisations of the attractor which reveal some of its fractal structure.
Parameter continuation in opposite directions reveals hysteresis with parameter intervals for which stable chaotic dynamics can co-exist with a stable periodic orbit, or even co-exist with a second chaotic attractor.
We also find parameter values for which there appears to be transient chaos.
In Section~\ref{sec.snaking} we present an example of a branch of periodic orbits
which snakes in parameter space resulting in a small parameter region in which over 50 limit cycles co-exist. Period-doubling cascades either side of this region lead to additional parameter regions of chaotic dynamics.

In Section~\ref{sec.physiol} we discuss the physiological plausibility and implications of the results with regards to periodic hematological disorders,
and Section~\ref{sec.conc} includes further discussion and conclusions.

%
\section{Hematopoietic Stem Cell Equation} \label{sec.decoup.hsc}
HSC dynamics can be described by the classic $G_{0}$ cell-cycle model of Burns and Tannock~\cite{Burns_1970}.
The HSCs are distinguished between two phases, the proliferating phase and the resting or $G_0$ phase.
We denote the concentration of HSCs in the resting phase by $Q$.
From the resting phase HSCs may enter the proliferating phase at a
rate $\beta(Q)$, or differentiate at a constant rate $\kappa$, or remain in the resting phase.
Once HSCs enter the proliferating phase they are lost by apoptosis with a constant rate $\gamma$ or undergo mitosis. The time to complete the cell cycle is $\tau$.
After mitosis cells return to the $G_{0}$ resting phase, from whence the cycle may begin again.
In the resting phase HSCs are quiescent, while in the proliferating phase they are active and
distinguished between four subphases: $G_{1}$, $S$, $G_{2}$ and $M$. Cells at gap $G_{1}$ increase in size and
are committed to go through the cell cycle and undergo mitosis. At $S$ phase DNA synthesis occurs, at gap $G_{2}$
cells continue to grow, while in the mitotic phase $M$ cells stop growing and undergo cell division.
\begin{figure}[t]
\centering
\begin{overpic}[scale=0.3]{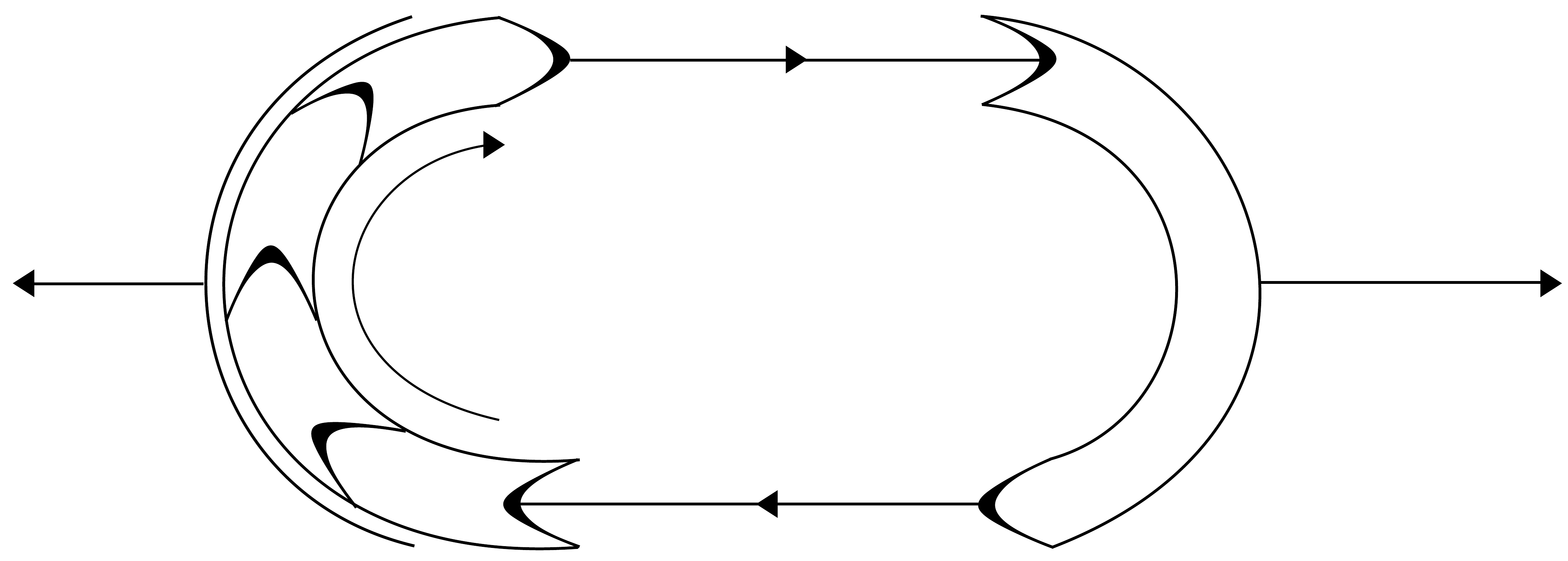}
\put(6.5,18.9){$\gamma$}
\put(2.1,15.4){\footnotesize{Apoptosis}}
\put(5.2,12.6){\footnotesize{Rate}}
\put(17.2,23.8){$G_{2}$}
\put(26.9,30.5){$M$}
\put(24.5,19){\footnotesize{Cell}}
\put(24,16.5){\footnotesize{Cycle}}
\put(25.9,14.0){$\tau$}
\put(17.5,11.2){$S$}
\put(26.0,3.8){$G_1$}
\put(43.4,33.2){\footnotesize{Self Renewal}}
\put(43.6,29.3){\footnotesize{Quiescence/}}
\put(44.0,26.8){\footnotesize{Senescence}}
\put(46.8,24.5){\footnotesize{Entry}}
\put(46.5,5.5){$\beta(Q)$}
\put(41.5,1.1){\footnotesize{Cell Cycle Entry}}
\put(65.8,17.9){\footnotesize{Resting}}
\put(66.6,15.4){\footnotesize{Phase}}
\put(76.1,16.5){$G_{0}$}
\put(88.5,18.4){$\kappa$}
\put(81.0,15.4){\footnotesize{Differentiation/}}
\put(83.0,13){\footnotesize{Death Rate}}
\end{overpic}
\vspace*{-3mm}
\caption{Schematic representation of the classical $G_{0}$ model for HSCs. The proliferating phase of the cell cycle is divided between 4 subphases:  gap one $G_{1}$, synthesis $S$, gap two $G_{2}$, and mitosis $M$. Cells in the resting phase (gap zero $G_{0}$), may differentiate with rate $\kappa$ or entry the cell cycle with rate $\beta(Q)$. Cells in the proliferation phase may be lost by apoptosis with rate $\gamma$, otherwise they re-enter the resting phase after mitosis, $\tau$ time units after they left the resting phase.}
\label{fig_SchematicHSC}
\end{figure}

A schematic of the model is presented in Figure~\ref{fig_SchematicHSC}.
Following Mackey~\cite{Mackey_1978,Mackey_1994} this model can be stated as the DDE~\eqref{Qprime} where $\beta(Q)$ is a Hill function defined by
\vspace{-0.0mm}
\begin{equation}\label{beta2}
\beta(Q) = f\frac{\theta^{s}}{\theta^{s}+Q^{s}},
\end{equation}
and the HSC amplification factor $A$ is given by
\begin{equation}\label{AQ2}
A = 2\mathnormal{e}^{-\gamma\tau}.
\end{equation}
The parameters $\kappa$, $\gamma$, $\tau$, $\theta$, $f$ and $s$ are all strictly positive,
and we are interested in non-negative solutions $Q(t)\geq0$,
since $Q(t)$ represents a blood cell population.


Aspects of the dynamics of the HSC model~\eqref{Qprime} have been studied
by a number of authors, mainly concentrating on the existence and stability of the steady states, and one parameter continuation of some of the periodic orbits that
arise~\cite{Andersen_2001,Bernard_2004,Mackey_1994}.
A special case of~\eqref{Qprime}
with $\beta(Q)$ replaced by a step function, corresponding to the limit
as $s\to\infty$ in~\eqref{beta2}, allows explicit stable periodic solutions to be constructed~\cite{Pujo_Menjouet_2005,Pujo_Menjouet_2004}.
The existence of stable periodic solutions for $s$ large was subsequently established~\cite{Pujo_Menjouet_2006}, as a perturbation of the $s=\infty$ solutions.
In contrast, we will study the dynamics of~\eqref{Qprime} with $\beta(Q)$ a Hill function, with $s$ small, as is usually considered to be the case
in hematopoiesis models.


In order to solve the DDE~\eqref{Qprime} for $t\geq0$
it is necessary to define an initial function $Q(t)=\varphi(t)$ for $t\in[-\tau,0]$. For
$\varphi\in C\coloneqq C([-\tau,0],[0,\infty))$, that is continuous, bounded and non-negative, from the following
theorem the solution of~\eqref{Qprime} is also bounded and non-negative. It follows that the DDE~\eqref{Qprime} can be considered as an infinite dimensional dynamical system with phase space $C$~\cite{Smith_2010}.
%
%
%
%
\begin{theorem}\label{theorem.bound}
If $Q(t)=\varphi(t)$ for $t\in[-\tau,0]$ where $\varphi\in C([-\tau,0],[0,\infty))$, then
equation~\eqref{Qprime} has a unique solution $Q(t)$ defined for all $t\geq0$ and which satisfies $Q(t)\in[0,M]$ for all $t\geq0$ for some $M<\infty$.
\end{theorem}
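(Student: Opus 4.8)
The plan is to combine the classical method of steps with an a priori bound obtained from a running‑maximum argument. For the local theory, I would first observe that although $\beta$ need not be Lipschitz at $Q=0$ (this happens when $s<1$), the product $B(Q):=\beta(Q)Q=f\theta^{s}Q/(\theta^{s}+Q^{s})$ is $C^{1}$ on $[0,\infty)$ with $B(0)=0$, hence extends to a locally Lipschitz function on $\mathbb{R}$. On $[0,\tau]$ the DDE~\eqref{Qprime} reduces to the ODE $Q'(t)=-\kappa Q(t)-B(Q(t))+A\beta(\varphi(t-\tau))\varphi(t-\tau)$, whose right‑hand side is continuous in $t$ and locally Lipschitz in $Q$, so Picard--Lindel\"of gives a unique solution on a maximal subinterval of $[0,\tau]$; iterating this construction over $[n\tau,(n+1)\tau]$ for $n=1,2,\dots$ yields a unique maximal continuous solution $Q$ on some interval $[0,T)$.

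Non‑negativity I would obtain by induction on the steps. Assuming $Q\ge0$ on $[-\tau,n\tau]$, the forcing term $h_{n}(t):=A\beta(Q(t-\tau))Q(t-\tau)$ is continuous and non‑negative on $[n\tau,(n+1)\tau)$, and rewriting the equation there as a linear ODE with coefficient $\kappa+\beta(Q(t))$ and forcing $h_{n}$ gives the integrating‑factor representation
\[
Q(t)=Q(n\tau)\,e^{-\int_{n\tau}^{t}(\kappa+\beta(Q(r)))\,dr}+\int_{n\tau}^{t}e^{-\int_{u}^{t}(\kappa+\beta(Q(r)))\,dr}\,h_{n}(u)\,du,
\]
which displays $Q$ as a sum of non‑negative terms since $Q(n\tau)\ge0$ and $h_{n}\ge0$. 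This closes the induction, so $Q\ge0$ throughout $[0,T)$ (and in particular $Q$ genuinely solves~\eqref{Qprime} on each step).

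The a priori upper bound is the one genuinely delicate point: when $s<1$ the nonlinearity $B(Q)=\beta(Q)Q$ is \emph{unbounded}, so no linear majorant of $B$ can close a Gronwall estimate. The remedy exploits $\lim_{Q\to\infty}\beta(Q)=0$ together with the running maximum. Fix $\epsilon\in(0,\kappa/A)$ and pick $R_{\epsilon}$ with $\beta(Q)<\epsilon$ for $Q\ge R_{\epsilon}$, so that $B(Q)\le fR_{\epsilon}+\epsilon Q$ for every $Q\ge0$. Given $t_{1}\in(0,T)$, let $t_{0}\in[-\tau,t_{1}]$ realise $\max_{[-\tau,t_{1}]}Q$. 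If $t_{0}\le0$ this maximum is at most $\|\varphi\|_{\infty}$; otherwise $t_{0}>0$ is a maximum of $Q$ relative to earlier times, forcing $Q'(t_{0})\ge0$ and $Q(t_{0}-\tau)\le Q(t_{0})$, so~\eqref{Qprime} yields $0\le Q'(t_{0})\le-\kappa Q(t_{0})+A\bigl(fR_{\epsilon}+\epsilon Q(t_{0})\bigr)$ and hence $Q(t_{0})\le AfR_{\epsilon}/(\kappa-A\epsilon)$. Either way $Q(t_{1})\le M:=\max\{\|\varphi\|_{\infty},\,AfR_{\epsilon}/(\kappa-A\epsilon)\}$, a constant independent of $t_{1}$; thus $Q$ stays in the compact set $[0,M]$ on $[0,T)$, and since the right‑hand side of each step ODE is continuous on $[n\tau,(n+1)\tau]\times[0,M]$, the standard continuation theorem forces $T=\infty$ and $Q(t)\in[0,M]$ for all $t\ge0$. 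With the method of steps and the integrating‑factor positivity argument being routine, this running‑maximum estimate — in particular the recognition that the unbounded nonlinearity must be tamed through $\beta(Q)\to0$ rather than by bounding $B$ linearly — is where the real work lies.
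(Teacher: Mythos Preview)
Your proof is correct and, in fact, considerably more self-contained than the paper's. The paper disposes of existence/uniqueness by invoking the method of steps, handles positivity by a brief barrier argument (observing that $Q'(t)>0$ whenever $Q(t)\in(-\theta,0)$ and $Q(t-\tau)\ge0$, so the solution cannot cross below zero), and for the upper bound simply cites Mackey and Rudnicki's result that $\limsup_{t\to\infty}Q(t)<\infty$ for non-negative solutions with monotone $\beta$. Your integrating-factor representation for positivity is cleaner and avoids any discussion of what~\eqref{Qprime} means for negative $Q$ (the paper's barrier argument tacitly evaluates $\beta$ at negative arguments, which is awkward when $s$ is not an even integer); your running-maximum estimate for boundedness replaces an external citation with an explicit constant $M=\max\{\|\varphi\|_\infty,\,AfR_\epsilon/(\kappa-A\epsilon)\}$, and correctly isolates the only place where the structure $\beta(Q)\to0$ (rather than boundedness of $Q\beta(Q)$) is genuinely needed. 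One small presentational point: in the integrating-factor step you write the coefficient as $\kappa+\beta(Q(t))$ before non-negativity is established, so strictly speaking you should use the continuous quotient $g(Q):=\tilde B(Q)/Q$ (with $g(0)=\tilde B'(0)=f$) coming from your Lipschitz extension $\tilde B$ of $B$; once $Q\ge0$ is shown, $g$ coincides with $\beta$ and the formula reads as written.
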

\begin{proof}
As already noted by other authors, uniqueness and local existence of solutions follows from the method of steps. It suffices to show that solutions are bounded to obtain global existence and complete the proof~\cite{Pujo_Menjouet_2006,Pujo_Menjouet_2005,Pujo_Menjouet_2004}.

To show positivity of the solution, let $t^*\geq0$ such that $Q(t)\geq0$ for $t\in[-\tau,t^*]$. It follows easily from
\eqref{Qprime} that for all $t\in(t^*,t^*+\tau)$ for which $Q(t)\in(-\theta,0)$ that $Q'(t)>0$. This leads to a contradiction unless $Q(t)\geq0$ for all $t\in(t^*,t^*+\tau)$. Hence $Q(t)\geq0$ for all $t\geq0$. The existence of an upper bound $Q(t)\leq M$ follows from $\limsup_{t\to\infty}Q(t)$ being bounded, which was shown
by Mackey and Rudnicki~\cite{Mackey_1994}
for non-negative solutions with a general class of monotonic functions $\beta(Q)$ that includes~\eqref{beta2}.
\end{proof}

In~\eqref{Qprime}-\eqref{AQ2}
there are six parameters $\{\kappa,\gamma,\tau,\theta,f,s\}$, but we can reduce these to four by non-dimensionalising the equations. Let
$\hat{t}\coloneqq t/\tau$,  $\hat{f}\coloneqq\tau f$$, \hat{\kappa}\coloneqq \kappa/f$, $\hat{\gamma}\coloneqq \gamma\tau$, and $\hat{Q}(\hat t)\coloneqq Q(t)/\theta$, then
$\hat{Q}(\hat t-1)= Q(t-\tau)/\theta$.
It is also convenient to define
$\hat{A}\coloneqq 2\mathnormal{e}^{-\hat{\gamma}}=2\mathnormal{e}^{-\gamma\tau}=A$,
and notice that
$\beta(\hat{Q})=f/(1+\hat{Q}^{s}).$
Then
\eqref{Qprime} becomes
%
\begin{equation}\label{dQdt_hat}
\frac{1}{\hat{f}}\frac{d\hat{Q}}{d\hat{t}}(\hat t)= -\hat{\kappa}\hat{Q}(\hat t)
-\frac{\hat{Q}(\hat t)}{1+\hat{Q}(\hat t)^{s}}+\hat{A}\frac{\hat{Q}(\hat t-1)}{1+\hat{Q}(\hat t-1)^{s}},
\end{equation}
which depends on the four parameters $\hat{f}$, $\hat{\kappa}$, $s$ and $\hat{A}$ (or $\hat\gamma$). Although many mathematicians would prefer to study the non-dimensionalised DDE~\eqref{dQdt_hat} instead of~\eqref{Qprime}, we chose to work with~\eqref{Qprime} so that the solutions and bifurcations that we find have direct physiological interpretations.
But, as suggested by the non-dimensionalisation, we need only vary four parameters in~\eqref{Qprime}.
It is easily seen that varying the four parameters $\gamma$, $\kappa$, $\tau$ and $s$ in~\eqref{Qprime}, with the other parameters held constant, we can reproduce all possible values of $\hat{f}$, $\hat{\kappa}$, $s$ and $\hat{A}$ in~\eqref{dQdt_hat}, and hence we will only need to consider the variation of parameters from amongst these four.


In Table~\ref{tab.model.par} we state homeostatic values of the parameters for the model~\eqref{Qprime}-\eqref{AQ2}. The values in Table~\ref{tab.model.par} are all taken from
Craig \textit{et al}~\cite{Craig_2016}. The first two parameters in the table,
$Q^{h}$ the homeostatic concentration of HSCs, and
$\beta(Q^h)$ the homeostatic rate that cells enter the cell cycle, do not appear explicitly in
the model~\eqref{Qprime}-\eqref{AQ2}, but are used to calculate the last two parameters.
To ensure that $Q^{h}$ is exactly the homeostatic concentration of HSCs in the model~\eqref{Qprime}-\eqref{AQ2}, the last three parameters are computed in double precision. While $A$ is given directly
by~\eqref{AQ2}, rearranging ~\eqref{Qprime} and~\eqref{beta2} at homeostasis implies that
\begin{equation} \label{homeoparas}
\theta=Q^h\left(\frac{f}{\beta(Q^h)} - 1\right)^{-1/s},
\qquad
\kappa=(A-1)\beta(Q^h).
\end{equation}
We will use the parameters of Table~\ref{tab.model.par}
as a starting point for our bifurcation studies.
\begin{table}[t]
\begin{center}
\begin{tabular}{|c|c|c|c|c|}
\hline
Name & Interpretation &Value & Units\\
\hline
$Q^{h}$ & HSC homeostasis concentration & $1.1$ & ${10}^{6}\mathrm{cells/kg}$\\
$\beta(Q^h)$ & homeostasis cell cycle entry rate  & $0.043$ & $\mathrm{days}^{-1}$ \\
\hline
$\gamma$ & HSC apoptosis rate & $0.1$ & $\mathrm{days}^{-1}$\\
$\tau$ & Time for HSC re-entry & $2.8$ & $\mathrm{days}$\\
$f$ & Maximal HSC re-entry rate & $8$ &  $\mathrm{days}^{-1}$\\
$s$ & HSC re-entry Hill coefficient & $2$ & $-$\\
\hline
$A$ & HSC Amplification Factor & $1.512^*$ & $-$\\
$\theta$ & Half-effect HSC concentration & $0.08086^*$ & ${10}^{6}\mathrm{cells/kg}$\\
$\kappa$ & HSC differentiation rate to all lines & $0.022^*$ & $\mathrm{days}^{-1}$\\
\hline
\end{tabular}
\end{center}
\vspace*{2mm}
\caption{Homeostasis values of the parameters for the mathematical granulopoiesis model~\eqref{Qprime}-\eqref{AQ2},
which are all taken from Craig \textit{et al}~\cite{Craig_2016}.
The last three parameters with values denoted by $^*$, are only stated to 4 significant figures here, but actual values are
computed to full double precision in MATLAB~\cite{Matlab} using Eqs.~\eqref{AQ2} and~\eqref{homeoparas}.}
\label{tab.model.par}
\end{table}


For general parameter values, from~\eqref{Qprime},
steady states satisfy
\begin{equation}\label{dQdt.star}
0 = [(A-1)\beta(Q)-\kappa]Q.
\end{equation}
Hence the DDE~\eqref{Qprime} has the trivial steady state $Q(t)=0$ for all values of the parameters. The trivial steady state has been shown to be globally asymptotically stable if $\kappa>f(A-1)$~\cite{Mackey_1994}.
%
%
Eq.~\eqref{dQdt.star} has another solution $Q^*$ which satisfies
\begin{equation}\label{betaQ.star}
\beta(Q^{*}) = \frac{\kappa}{(A-1)}.
\end{equation}
Since $\beta(Q)$ is monotonic, this defines a unique nontrivial steady state $Q^*$.
Using~\eqref{beta2} we have
\begin{equation}\label{Q.star}
Q^{*} = \theta\left[f\frac{(A-1)}{\kappa} - 1\right]^{1/s}.
\end{equation}
From~\eqref{Q.star}, along with the relation~\eqref{AQ2} we obtain that $Q^*>0$ if and only if
the upper bounds
\begin{equation}\label{kgt}
\kappa < f(A-1),\qquad \gamma\tau < \ln\left(\frac{2f}{\kappa+f}\right),
\end{equation}
on the parameters $\kappa$, $\gamma$ and $\tau$ are satisfied. In the rest of this work,  we will consider the case where~\eqref{kgt} holds and
there are two steady states, $Q=0$ and $Q^*>0$. From~\eqref{AQ2} and~\eqref{kgt} we require $A\in(1+\kappa/f,2)$ for $Q^*>0$ to exist. Regarding the steady state $Q^*>0$ as a function of the
differentiation rate $\kappa$, the death rate $\gamma$, or the cell cycle duration $\tau$,
it is easy to see from~\eqref{Q.star} that $Q^*$ is a monotonically decreasing function with respect to each of these parameters, as would be expected from a physiological point of view. Furthermore
$Q^*\to0$ as equality is approached in~\eqref{kgt}.
We denote the non-zero steady state by $Q^h$ only when the parameters take their homeostasis values from Table~\ref{tab.model.par}, and by $Q^*$ otherwise. Unless otherwise stated, all the values in this paper are given in the same units as in Table~\ref{tab.model.par}.


%
%

\subsection{Stability Boundary}
\label{sec.stab.bound}

To determine the stability of the steady state $Q^*$,
linearise the DDE~\eqref{Qprime} around $Q^{*}$, with $z(t)\coloneqq Q(t)-Q^{*}$ to get
\begin{equation}\label{dQdtLin}
z^{\prime}(t)=az(t)+bz(t-\tau).
\end{equation}
It is convenient to define
\begin{equation}\label{hx}
h(x)\coloneqq x\beta(x),
\end{equation}
then the parameters $a$ and $b$ can be written as
\begin{equation}\label{ab}
a =-\kappa-\beta(Q^{*})-Q^{*}\beta^{\prime}(Q^{*})=-\kappa-h'(Q^*),\qquad
b =  A\left(\beta(Q^{*})+Q^{*}\beta^{\prime}(Q^{*})\right)=Ah'(Q^*).
\end{equation}
Seeking a nontrivial solution $z(t)=z_{0}\mathnormal{e}^{\lambda t}$ for equation~\eqref{dQdtLin},
we get the characteristic equation
\begin{equation}\label{hayes}
p(\lambda)\coloneqq  \lambda - a  - b\mathnormal{e}^{-\lambda\tau} = 0,
\end{equation}
first studied by Hayes~\cite{Hayes_1950}.
According to the \textit{Principle of Linearised Stability}~\cite{Smith_2010}
the stability analysis of the steady states for the nonlinear DDE~\eqref{Qprime} is reduced to the stability analysis of the steady state of the linearised equation~\eqref{dQdtLin}. Stability analysis of equation~\eqref{dQdtLin} is a standard example in DDEs, and can be found in~\cite{Breda_2015,Hale_1993,Insperger_2011,Smith_2010}.
The steady state is unstable if $b>-a$ with a characteristic value $\lambda>0$, and it is asymptotically stable if
$a<-|b|<0$, which is sometimes called the delay-independent stability region~\cite{Kolmanovskii_1999}. The interesting parameter region is for $b\leq-|a|<0$, where the steady state is asymptotically stable for
\begin{equation} \label{tau1}
\tau<\tau_1(a,b)\coloneqq\cos^{-1}(-a/b)/\sqrt{b^2-a^2}
\end{equation}
and unstable if $\tau>\tau_1(a,b)$. The curve $C_0$ described by $\tau_1(a,b)$ is contained in the region
$a\leq1/\tau$ and $b\leq-1/\tau$ and can be parameterised~\cite{Smith_2010} as
\begin{equation} \label{C0}
C_0=\bigl\{(a,b)=(\omega\cot(\omega)/\tau,-\omega\csc(\omega)/\tau), \quad \omega\in[0,\pi)\bigr\}.
\end{equation}
On this curve the characteristic equation~\eqref{hayes} has an imaginary solution $\lambda=\pm i\omega$.
The parameter region $(a\tau,b\tau)\in\mathbb{R}^2$ for which $Q^*$ is stable is illustrated in
Figure~\ref{fig:Hayesabstab}.

In the context of the DDE~\eqref{Qprime}
the trivial steady state $Q=0$ is unstable when $Q^*>0$~\cite{Qu_2010}. For the stability of $Q^*$, from~\eqref{betaQ.star} and~\eqref{ab} we have
$a+b=(A-1)Q^*\beta'(Q^*),$
and hence using~\eqref{beta2} and~\eqref{Q.star} we find that $-s\kappa<-s\kappa(1-\kappa/[f(A-1)])=a+b<0.$
Thus when $Q^*>0$ we have $a+b<0$, and $Q^*$ can only lose stability if $(a,b)$ crosses the curve $C_0$ as parameters are varied.

The steady state $Q^*>0$ is stable for all $\tau$ sufficiently small, since $\lambda=-(a+b)<0$ when $\tau=0$.
It is also stable for all $\tau$ sufficiently large. This follows from noting that
$b>0$ when $h'(Q^*)>0$, which holds
whenever $\tau>\tau_2$  where $\tau_2$ is defined by \eqref{tau12} below. Then, for
$\tau>\tau_2$ the parameters $(a,b)$ are in the upper half of the delay-independent stability region
$a<-|b|<0$.

The steady state $Q^*$ may lose stability in a Hopf bifurcation if the parameters cross the curve $C_0$ in the $(a,b)$ parameter space, but since the steady state is stable for $\tau$ small and for $\tau$ large, such Hopf bifurcations will occur in pairs, corresponding to crossings of $C_0$ in opposite directions.
For many DDEs 
the steady state gains ever more characteristic values
with positive real part as the delay $\tau$ is increased~\cite{Longtin1998}. The Burns-Tannock DDE~\eqref{Qprime} does not behave like that because of the exponential term in $A=2e^{-\gamma\tau}$ representing mortality during the cell cycle.

For the homeostasis parameter values in Table~\ref{tab.model.par} we have
$f(A-1)=4.093>\kappa=0.022$ so \eqref{kgt} is satisfied and there
exists a unique positive steady state, $Q^h=1.1>0$. Furthermore, at homeostasis
$a=0.020540$ and  $b=-0.064298$ and $\tau=2.8$ which is inside the stability region, as
illustrated in Figure~\ref{fig:Hayesabstab},
so the homeostasis steady state $Q^h$ is asymptotically stable.

If $\tau$ is varied and all the other parameters
are at their homeostasis values from Table~\ref{tab.model.par}, we find that
$Q^*>0$ is stable for $\tau\in[0,\tau_1^-)$ and for $\tau\in(\tau_1^+,\tau_{\max})$ where
\begin{equation} \label{hstabtau}
0 < \tau^{h} = 2.8 <  \tau_1^- = 5.74851 < \tau_1^+ = 6.87437 <  \tau_2 = 6.87662 < \tau_{\max} = 6.90401.
\end{equation}
Here, $\tau_{\max}$ is given by~\eqref{kgt}, while
\begin{equation} \label{tau12}
\tau_1^\pm=\frac{\cos^{-1}((\kappa+h'(Q^*))/A h'(Q^*))}{\sqrt{(A h'(Q^*))^2-(\kappa+h'(Q^*))^2}}, \qquad
\tau_2=\frac{1}{\gamma}\ln\left(\frac12\left[1+\frac{\kappa s}{f(s-1)}\right]\right)^{-1}.
\end{equation}
The formula for $\tau_2$ is found by using the expressions for $h(Q)$ and $Q^*$ and solving for $b=0$.
This formula was already stated in
Pujo Menjouet \textit{et al}~\cite{Pujo_Menjouet_2005,Pujo_Menjouet_2004}, where it was
erroneously claimed that $\tau=\tau_2$ was a stability boundary. As noted above already, and as shown in
Figure~\ref{fig:Hayesabstab}, the parameters corresponding to $\tau=\tau_2$ are in the interior of the stability region.

\sloppy{The expression for $\tau_1^\pm$ in \eqref{tau12}
is obtained from~\eqref{tau1} on substituting the values from~\eqref{ab}.
The two values for $\tau_1$ correspond to the parameters $(a,b)$ defined by~\eqref{ab} crossing the curve $C_0$ twice as $\tau$ is increased. The equation $\tau=\tau_1$ from \eqref{tau1}
can have
two solutions, because in the expression for $\tau_1$ the value of $A$ itself depends on
$\tau$, making the equation $\tau=\tau_1$ implicit in $\tau$.}
The locus of the parameters $(a,b)$ and the corresponding $\tau$ values from~\eqref{hstabtau} are illustrated in Figure~\ref{fig:Hayesabstab}.

\begin{figure}[t]
\centering
\includegraphics[width=0.55\textwidth]{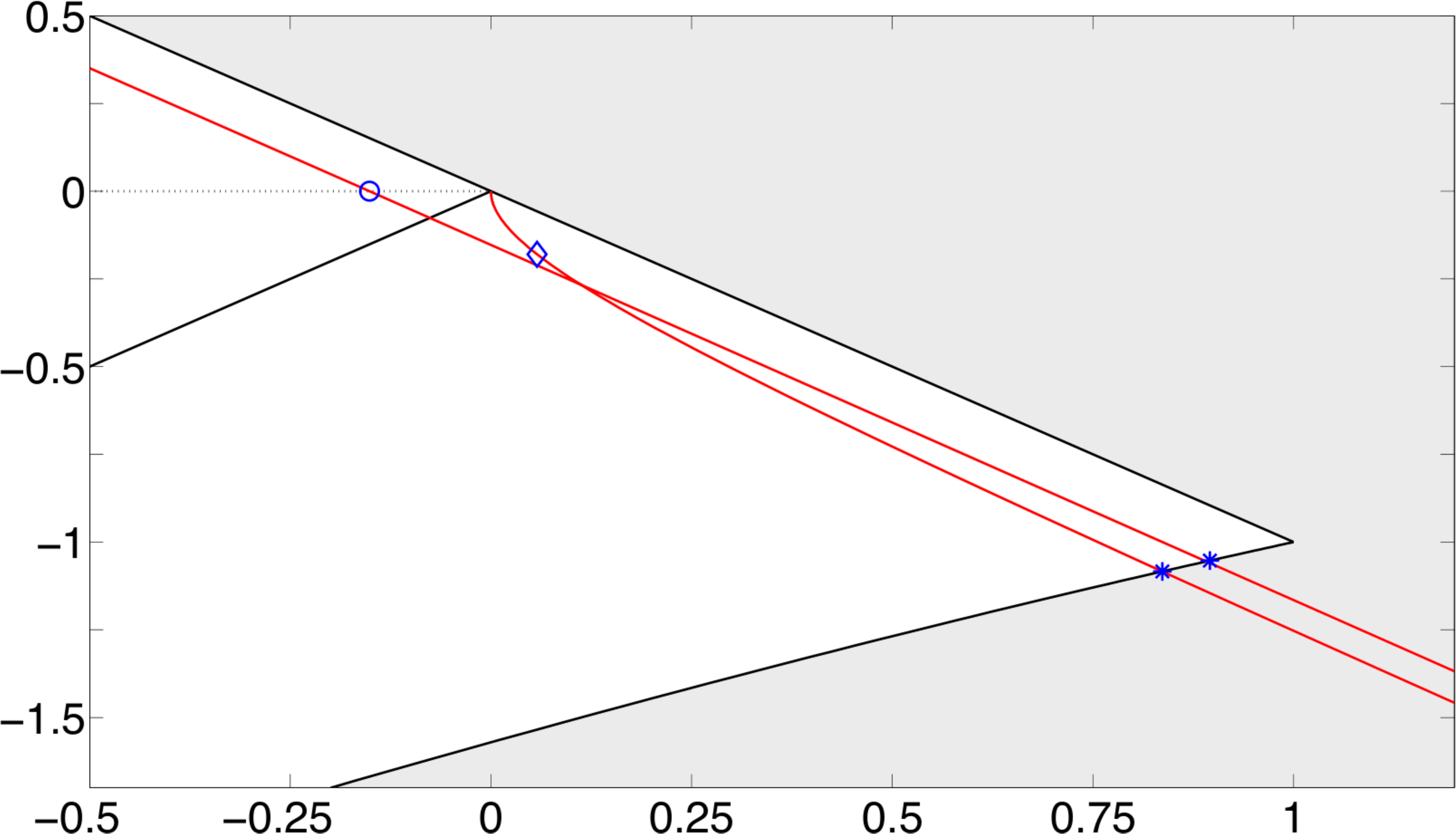}
\put(-10,0){$a\tau$}
\put(-275,135){$b\tau$}
\put(-65,41){$\tau_1^-$}
\put(-38,48.5){$\tau_1^+$}
\put(-217,117){$\tau_2$}
\put(-189,102){$\tau^h$}
\put(-115,25){$C_{0}$}
\vspace*{-1mm}
\caption{The parameter regions for which the steady state of equation~\eqref{dQdtLin} is stable (white) and unstable (shaded). Also shown (red) is the locus in $(a,b)$ of the parameters defined by
\eqref{ab} as $\tau$ is varied with the other parameters all at their values from Table~\ref{tab.model.par}. The values of $\tau$ from~\eqref{hstabtau} are indicated on this curve.}
\label{fig:Hayesabstab}
\end{figure}

If parameters are varied so that $\tau$ crosses the boundary $\tau_1$, either by varying $\tau$ itself, or by varying parameters to change the values of $\tau_1$, then the stability of $Q^*$ changes
and a periodic orbit with period $2\pi/\omega$
is created in a Hopf bifurcation.
The periodic orbits thus created have been explored to some extent~\cite{Fowler_Mackey_2002,Pujo_Menjouet_2006,Pujo_Menjouet_2005,Pujo_Menjouet_2004},
but the details of the Hopf bifurcation and its normal form have only been studied more recently~\cite{Qu_2010}.

Because~\eqref{hayes} has infinitely many roots, it is possible for additional pairs of complex conjugate
characteristic values to cross the imaginary axis resulting in additional Hopf bifurcations. These will occur
on curves $C_{2n}$, which are defined by \eqref{C0} but for $\omega\in[2n\pi,(2n+1)\pi)$~\cite{Smith_2010}. None of these curves intersect, so there are no double-Hopf bifurcations in which two pairs of characteristic values cross the imaginary axis at the same time.

\section{Bifurcations of the HSC Equation}
\label{sec.bifurc.hsc}

In this section we vary parameters from their homeostasis values so that the perturbed steady state $Q^*$ becomes unstable, and survey the bifurcations and dynamics that arise.
We begin our numerical bifurcation analysis of the DDE~\eqref{Qprime}, by studying codimension-one bifurcations
performing parameter continuation on $\tau$, $\gamma$ and $\kappa$, one at a time. We will also study
codimension-two bifurcations carrying out a two-parameter continuation for each pair of these three parameters.
Our numerical bifurcation diagrams are constructed using the well-established DDEBiftool
package~\cite{Engelborghs_2002,DDEBiftool15}, which runs under MATLAB~\cite{Matlab}.
This software finds periodic orbits using a boundary value approach, and is able to find stable and
unstable solutions, and continue the solutions as parameter(s) are varied and detect stability and
bifurcations. 

As noted after~\eqref{dQdt_hat}, the dynamics of the HSC DDE~\eqref{Qprime} only depend on the four
parameters $\tau$, $\gamma$, $\kappa$ and $s$.
The dependence
of the dynamics on the parameter $s$ has previously been studied for small integer values of $s$ (through numerical simulation)
and analytically in the limit as $s\to\infty$ (in which case the Hill function~\eqref{beta2} simplifies
to a Heaviside function)~\cite{Pujo_Menjouet_2006,Pujo_Menjouet_2005,Pujo_Menjouet_2004}. In the current work we will keep $s=2$ fixed and equal to its value
in Craig \textit{et al}~\cite{Craig_2016},
and we will only vary the parameters $\tau$, $\gamma$, and $\kappa$. In the following, unless mentioned otherwise,
all parameters take the values stated in Table~\ref{tab.model.par}.
Recall that the homeostasis steady state $Q^h$ is stable.

\subsection{One Parameter Continuation}
\label{sec.1p}

\begin{figure}[t]
\includegraphics[width=0.49\textwidth,height=44mm]{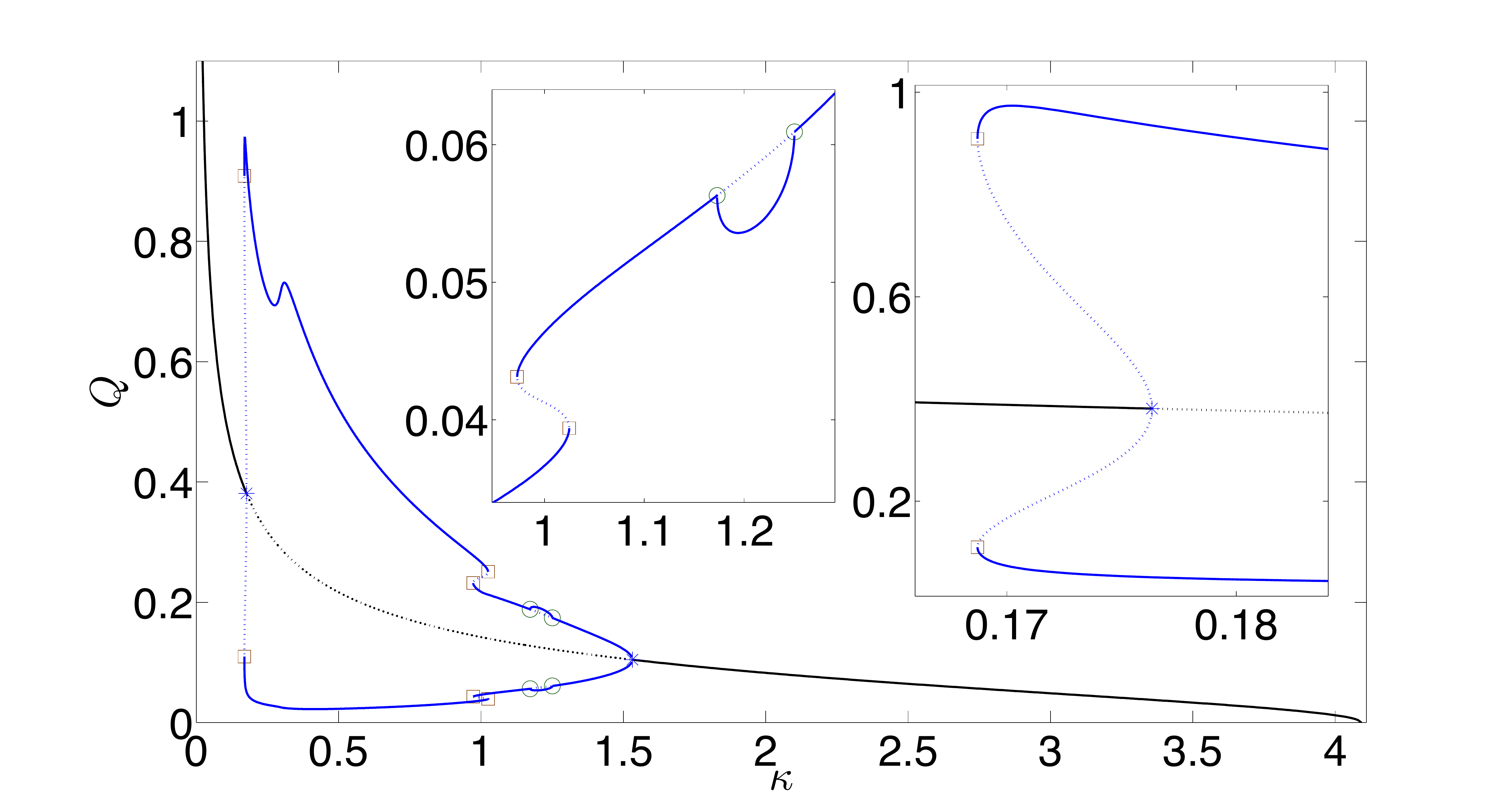}
\put(-245,2.5){\footnotesize{\textit{(i)}}}
\hspace*{1mm}
\includegraphics[width=0.49\textwidth,height=44mm]{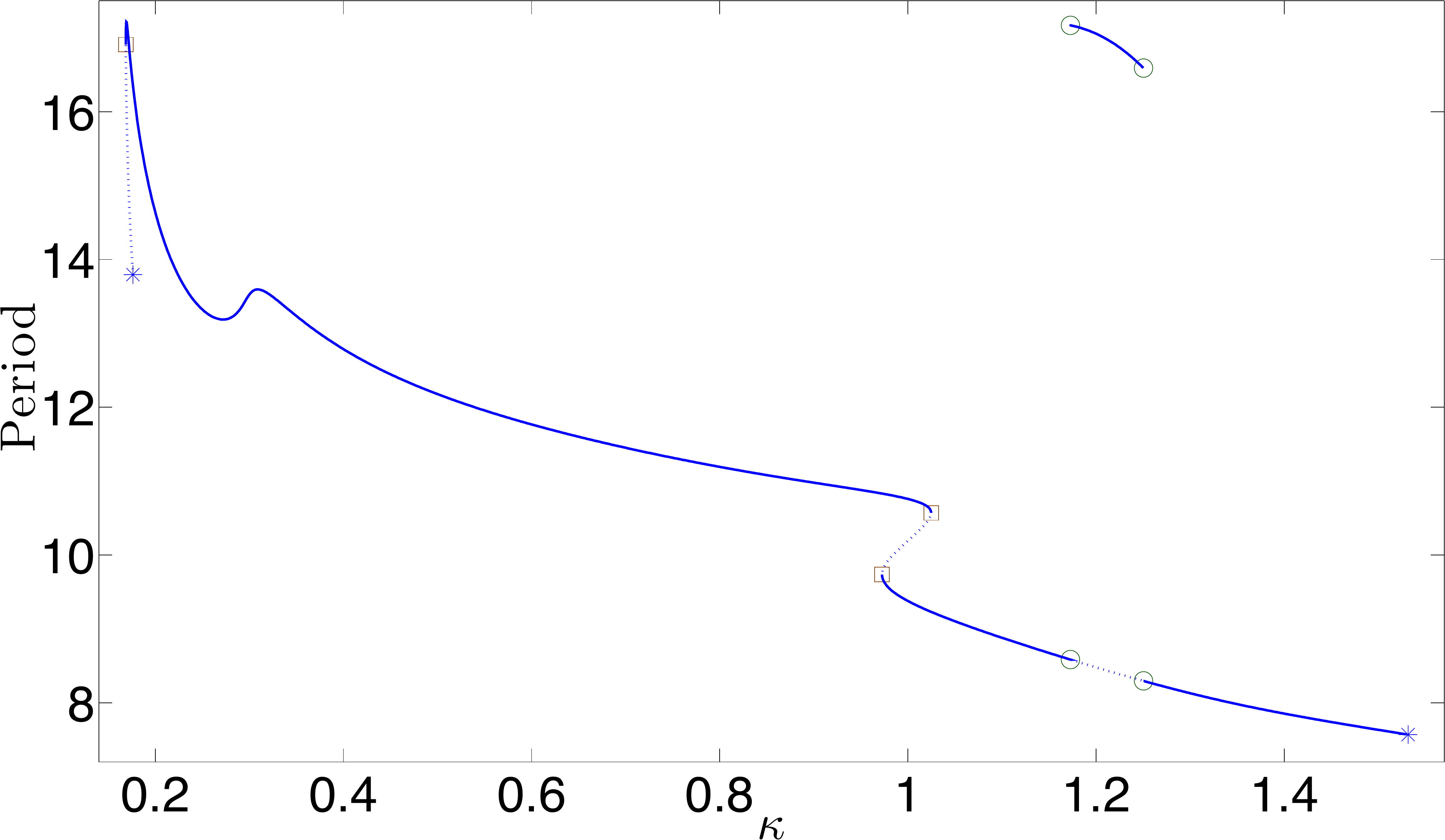}
\put(-245,2.5){\footnotesize{\textit{(ii)}}}
\vspace*{-03mm}
\caption{(i) Bifurcation diagram showing stability of the steady state $Q^*$, along with the branch of periodic orbits which bifurcates from $Q^*$ at the Hopf bifurcation points.
All parameters except $\kappa$ take values from Table~\ref{tab.model.par}.
In all bifurcation diagrams stable (unstable) steady states are represented by solid (dotted) black lines while the stable (unstable) limit cycles are represented by solid (dotted) blue lines.
For periodic orbits we plot both the maximum and minimum value of $Q$ over the period, so the upper and lower curve both represent the same periodic orbit. Hopf bifurcation points \textlarger[2]{$\textcolor{blue}{*}$}, saddle-node bifurcation of limit cycles points \textlarger[0]{$\textcolor{brown1}{\Box}$},  and period-doubling bifurcation points \textlarger[-1]{$\textcolor{green1}{\Circle}$},
are indicated, and also highlighted in insets.
The saddle-node bifurcation in the right inset creates an interval for $\kappa\in(0.16872,0.17632)$ of bistability between the periodic orbit and the steady state $Q^*$, while the pair of steady fold bifurcations seen in the left inset create an interval for $\kappa\in(0.97254,1.0247)$ of bistability between two different periodic orbits.
(ii) Shows the period of the periodic orbits seen in (i).
Two period doubling bifurcations and an interval for $\kappa\in(1.173,1.2506)$ of period-doubled solutions are also shown. These can also be seen  in the left inset in (i).}
\label{fig:KappaCont}
\end{figure}


We begin by varying the differentiation rate $\kappa$ (with all the other parameters held constant), with the bifurcation diagram presented in Figure~\ref{fig:KappaCont}.
The non-trivial steady state $Q^*$ (given by~\eqref{Q.star})
is seen to be unstable for an interval of $\kappa$ values between two Hopf bifurcation points, and stable for $\kappa$
outside this interval.
The Hopf bifurcation point at $\kappa\approx0.17632$ is subcritical leading to a branch of unstable periodic orbits
which becomes stable at $\kappa\approx0.16872$ in a fold bifurcation of periodic orbits. This creates a small interval of
bistability between the stable steady state $Q^*$ and the stable periodic orbit of amplitude close to $1$.
Similar bistability has been observed before in hematopoiesis models.
Bernard \textit{et al}~\cite{Bernard_2003,Bernard_2004} studied a model for white blood cell (WBC) production which incorporated~\eqref{Qprime}
to model the stem cell dynamics, and found bistability for WBCs between a stable steady state and a stable periodic
orbit. But in that model the bistability was seen as an amplification parameter in the WBC proliferation was varied,
and was not associated with the variation of any parameter in the HSC equation. We are not aware of bistability
having been observed previously in a stand-alone model for HSC dynamics.

There is a pair of fold bifurcations of periodic orbits which creates
an interval for $\kappa\in(0.97254,1.0247)$ of bistability of periodic orbits. Bistability is interesting, as it allows the possibility for a short term perturbation of the system (such as during treatment) to cause the solution to switch between one solution and another, and for the new stable dynamics to persist indefinitely. The two instances of bistability observed here occur for relatively small parameter intervals far from the homeostasis parameters and so are unlikely to be of great direct physiological relevance for healthy subjects. However the existence of bistability is interesting in the context of dynamical diseases which are related to bifurcations that occur when parameters in the system are varied.
There is also a pair of period-doubling bifurcations on the branch of periodic orbits illustrated in
Figure~\ref{fig:KappaCont}.
This leads to an interval for $\kappa\in(1.173,1.2506)$ where a period-doubled orbit is stable.

The amplitude of the periodic orbits on the main branch tends to decrease along the branch, and the periodic orbits disappear in
a super-critical Hopf bifurcation at $\kappa\approx1.5317$. For $\kappa$ between this value and its bound given by~\eqref{kgt},
the steady-state $Q^*$ is stable.
The periods of the orbits shown in Figure~\ref{fig:KappaCont}(ii)
strongly correlate with the amplitude of the orbits. The periods are all larger than a week, much larger than the delay $\tau=2.8$ days.

\begin{figure}[t]
\includegraphics[width=0.49\textwidth,height=44mm]{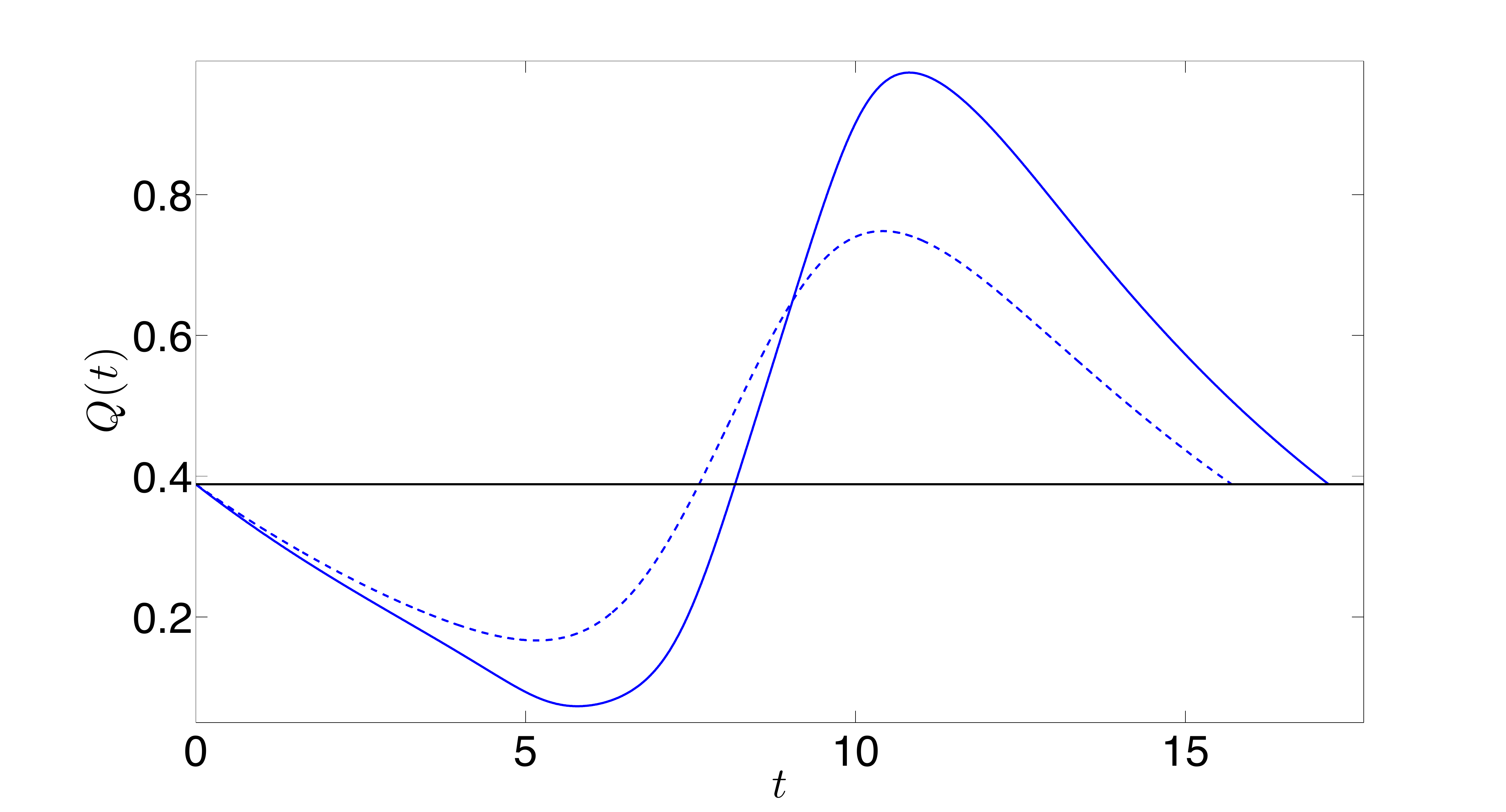}
\put(-244,2.5){\footnotesize{\textit{(i)}}}
\includegraphics[width=0.49\textwidth,height=44mm]{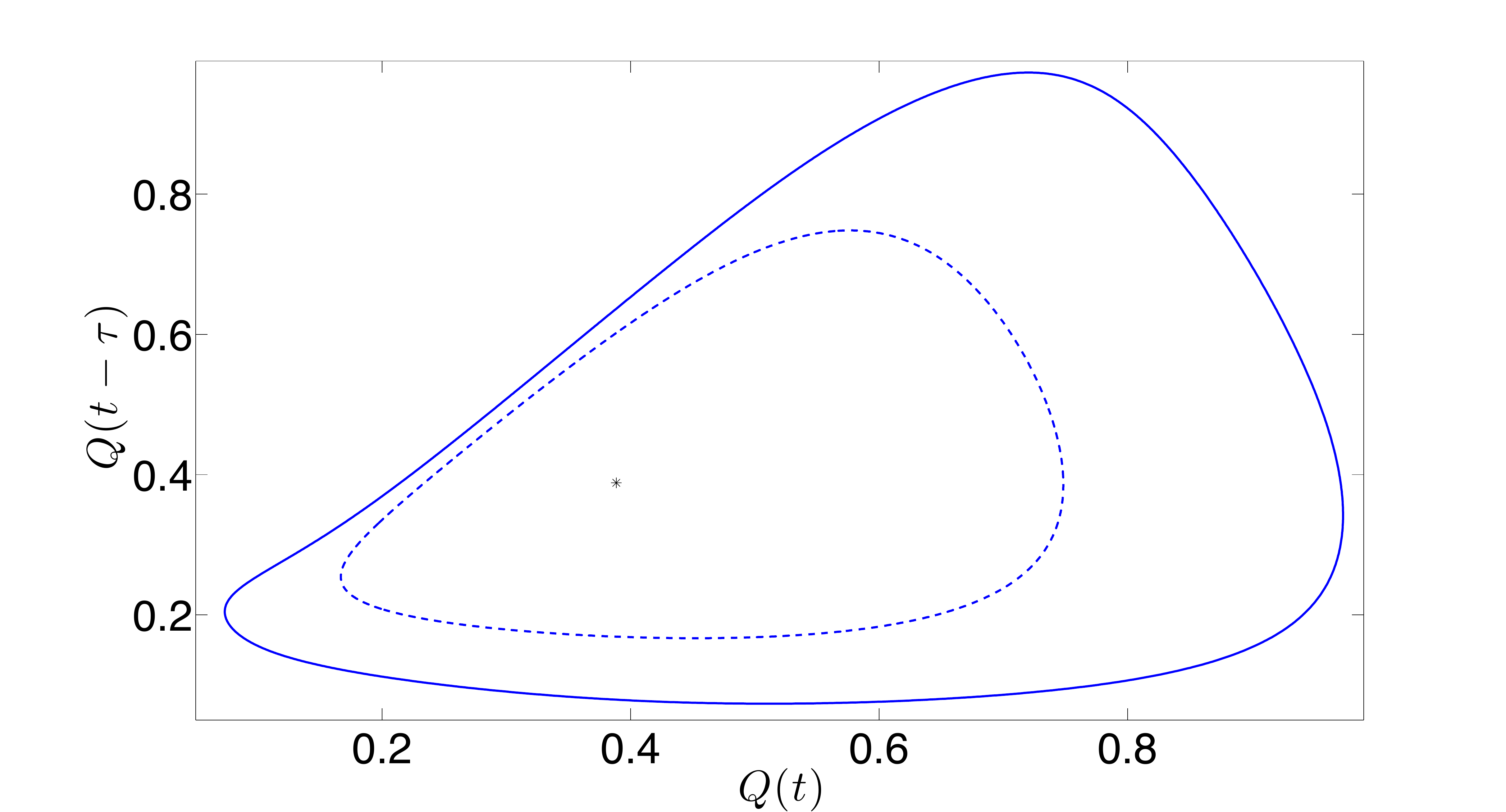}
\put(-244,2.5){\footnotesize{\textit{(ii)}}}

\vspace*{2mm}

\includegraphics[width=0.49\textwidth,height=44mm]{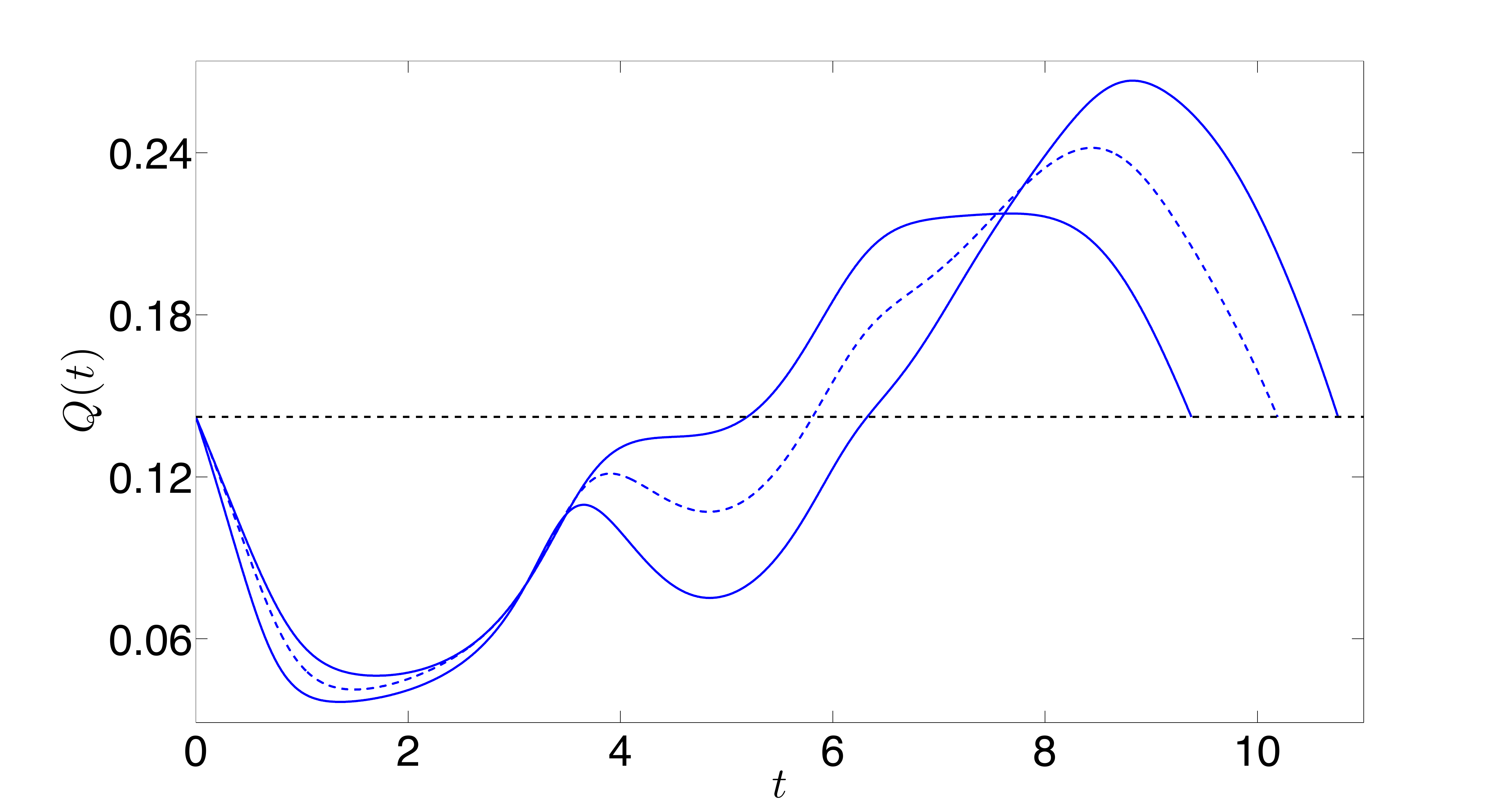}
\put(-244,2.5){\footnotesize{\textit{(iii)}}}
\includegraphics[width=0.49\textwidth,height=44mm]{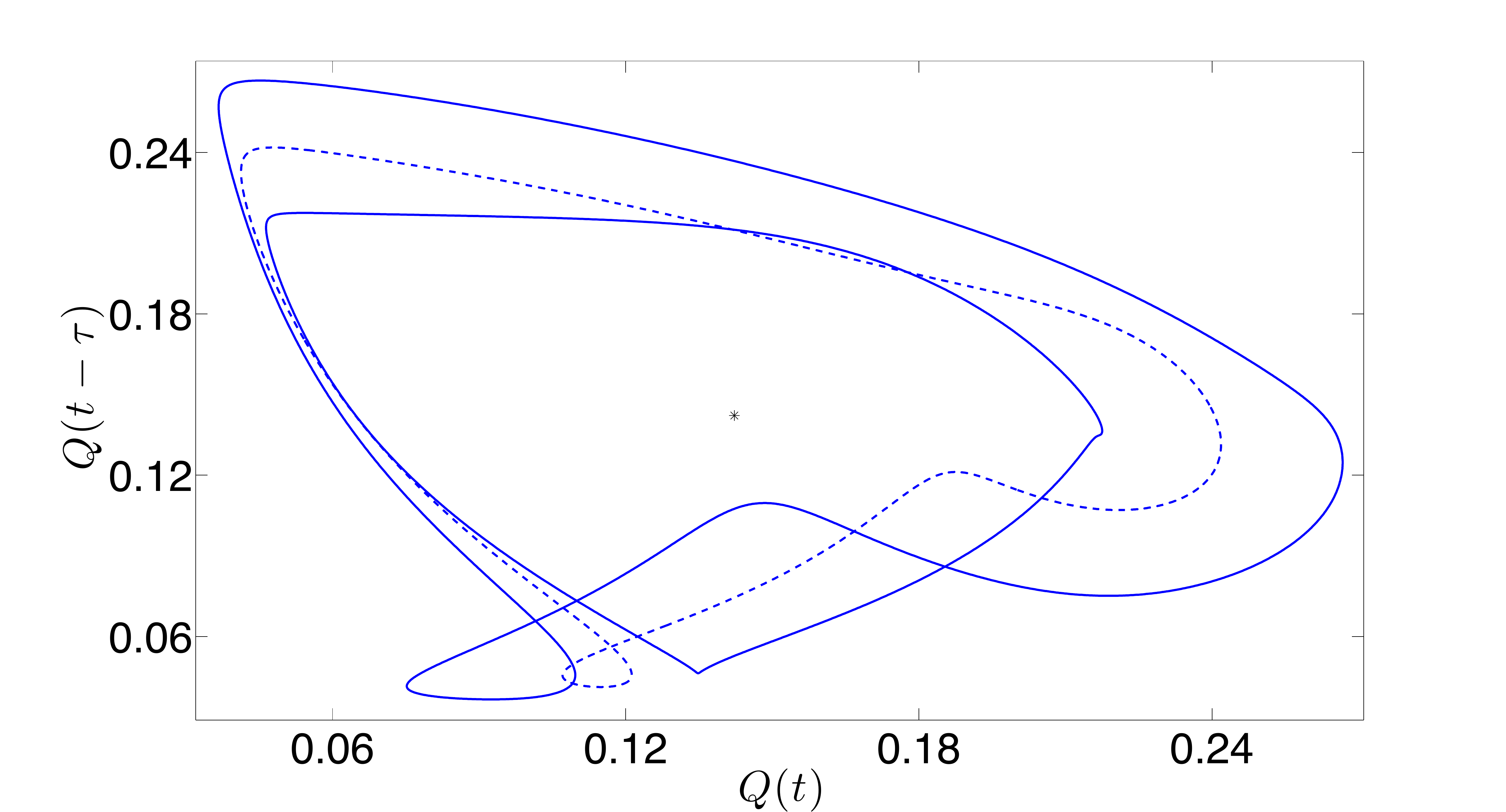}
\put(-244,2.5){\footnotesize{\textit{(iv)}}}\\

\vspace*{2mm}

\includegraphics[width=0.49\textwidth,height=44mm]{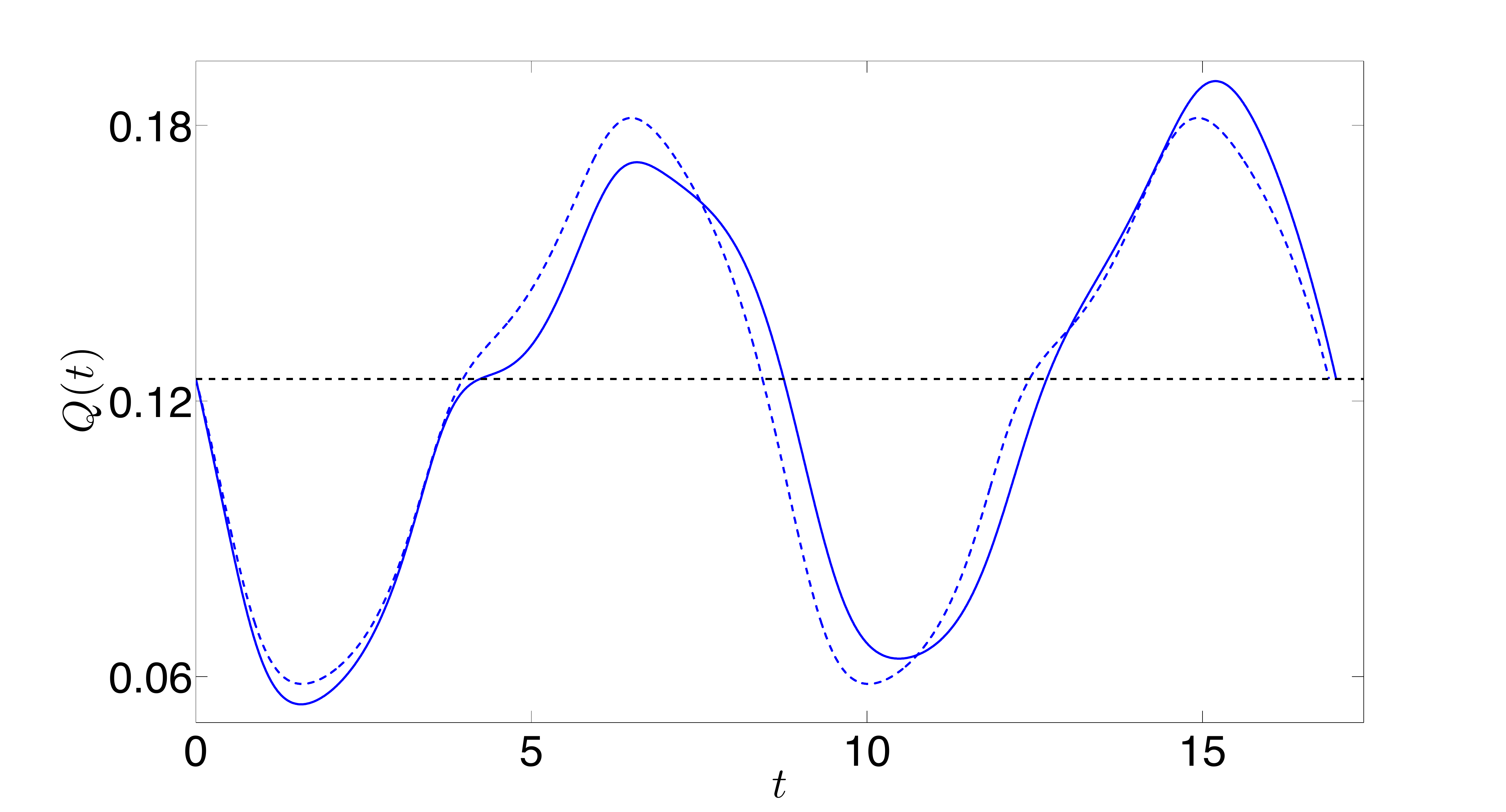}
\put(-244,2.5){\footnotesize{\textit{(v)}}}
\includegraphics[width=0.49\textwidth,height=44mm]{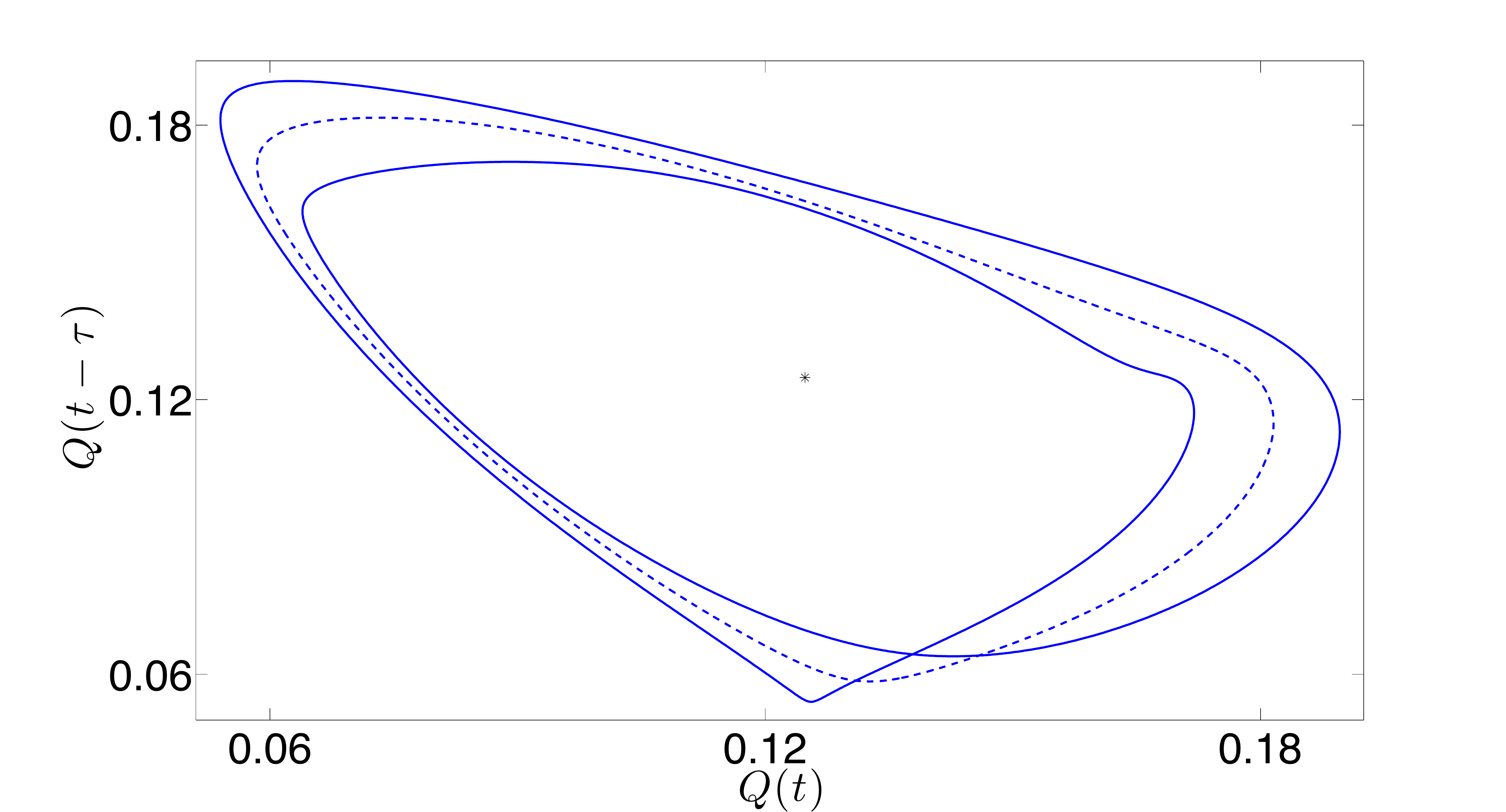}
\put(-244,2.5){\footnotesize{\textit{(vi)}}}
\vspace*{-2mm}
\caption{Periodic Orbits from the branch seen in Figure~\protect\ref{fig:KappaCont}.
(i) For $\kappa=0.17$ the stable (solid line) and unstable (dotted line) periodic orbits which coexist with the stable steady state $Q^*$ just before the subcritical Hopf bifurcation. (ii) The same periodic orbits shown in the
two-dimensional $(Q(t),Q(t-\tau))$ projection of phase space.
(iii-iv) For $\kappa=1$ the two coexisting stable periodic orbits along with the unstable periodic orbit and unstable steady state.
(v-vi) For $\kappa=1.2$ the stable period-doubled orbit (solid line),
along with the unstable periodic orbit
from which it bifurcates (in panel (v) we show two periods of this orbit).
}
\label{KappaCont_Orbits_Bistab_kappa_plots}
\end{figure}

Figure~\ref{KappaCont_Orbits_Bistab_kappa_plots} illustrates some of the more interesting periodic orbits
found during the $\kappa$ continuation, including examples of bistability and period doubled orbits.
The left panels show periodic solution profiles over one period,
while the right panels display the time-delay embedding of the same solutions.
Recalling that $Q(t)$ is a scalar, but that
the DDE~\eqref{Qprime} defines an infinite dimensional dynamical system, $(Q(t),Q(t-\tau))$ gives a useful
two-dimensional projection of the infinite dimensional solutions, which has been used widely since it was
introduced by Glass and Mackey~\cite{Glass_Mackey_1979}.
Since it is only a projection of phase space, orbits may appear to
cross each other, but because it incorporates both the terms $Q(t)$ and $Q(t-\tau)$ that appear in~\eqref{Qprime} this projection is often very revealing.

\begin{figure}[ht]
\includegraphics[width=0.49\textwidth,height=44mm]{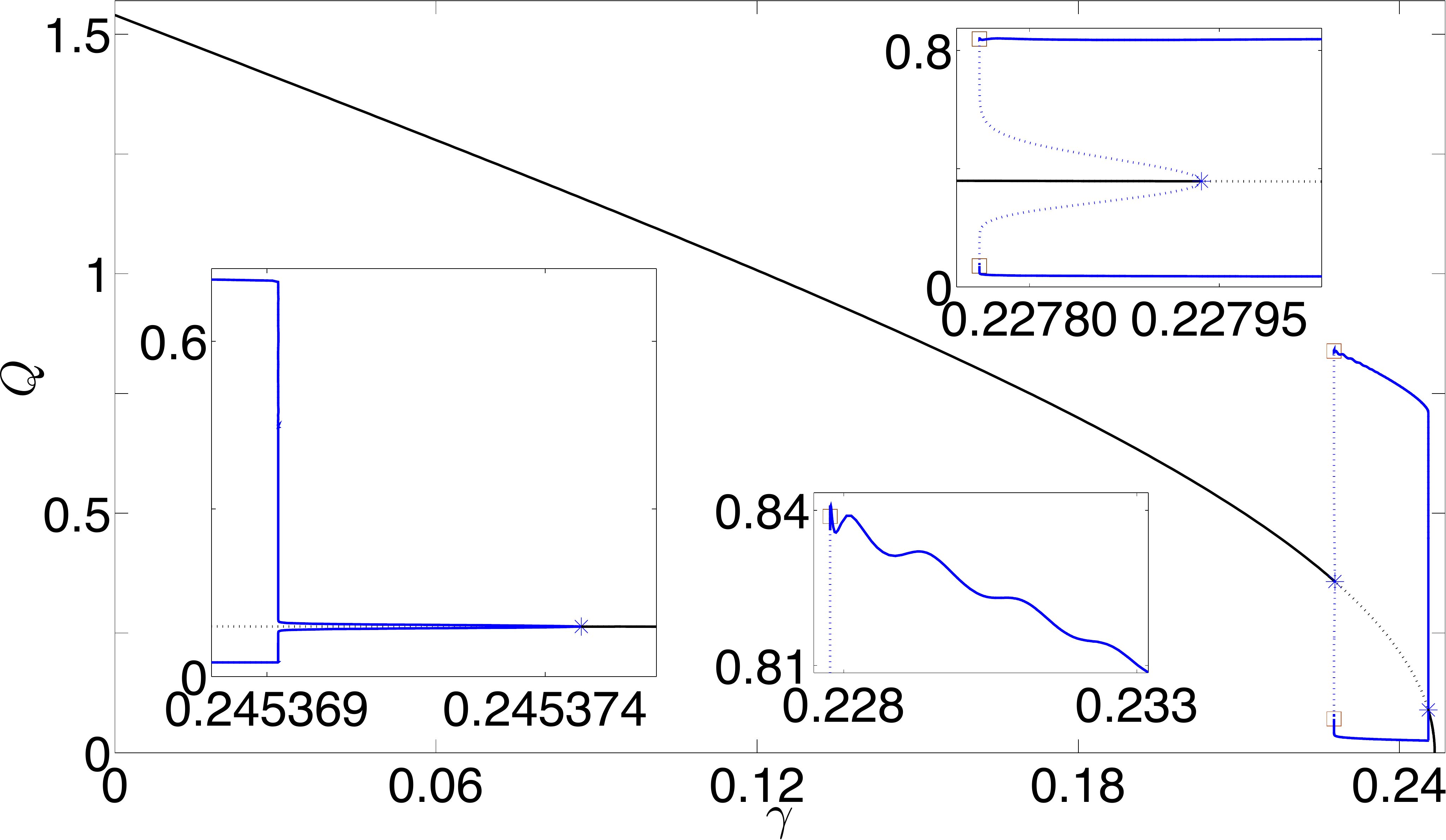}
\put(-245,2.5){\footnotesize{\textit{(i)}}}
\hspace*{1mm}
\includegraphics[width=0.49\textwidth,height=44mm]{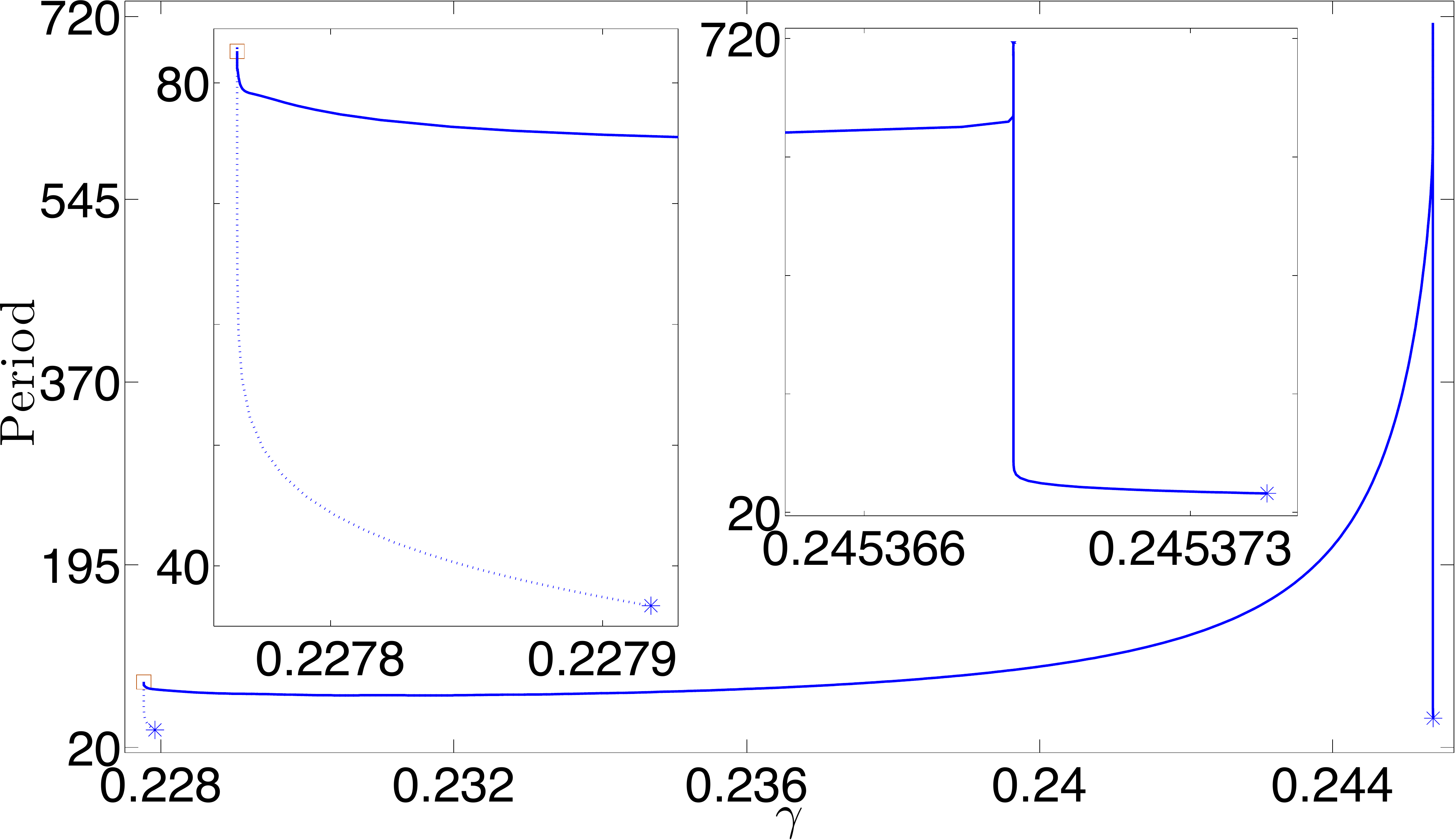}
\put(-245,2.5){\footnotesize{\textit{(ii)}}}
\vspace*{-2mm}
\caption{Continuation in $\gamma$ with other parameters taking values from Table~\ref{tab.model.par}.
(i) Bifurcation diagram showing stability of the steady state $Q^*$, along with the branch of periodic orbits
which bifurcates from the steady state at the Hopf bifurcation points at $\gamma=0.227918$ and $\gamma=0.245375$.
A saddle-node bifurcation seen in the top inset creates an interval for
$\kappa\in(0.227766,0.227918)$ of bistability between the periodic orbit and the steady state $Q^*$.
(ii) Shows the period of the periodic orbits seen in (i).
}
\label{fig:GammaCont}
\end{figure}
\begin{figure}[ht]
\includegraphics[width=0.49\textwidth,height=44mm]{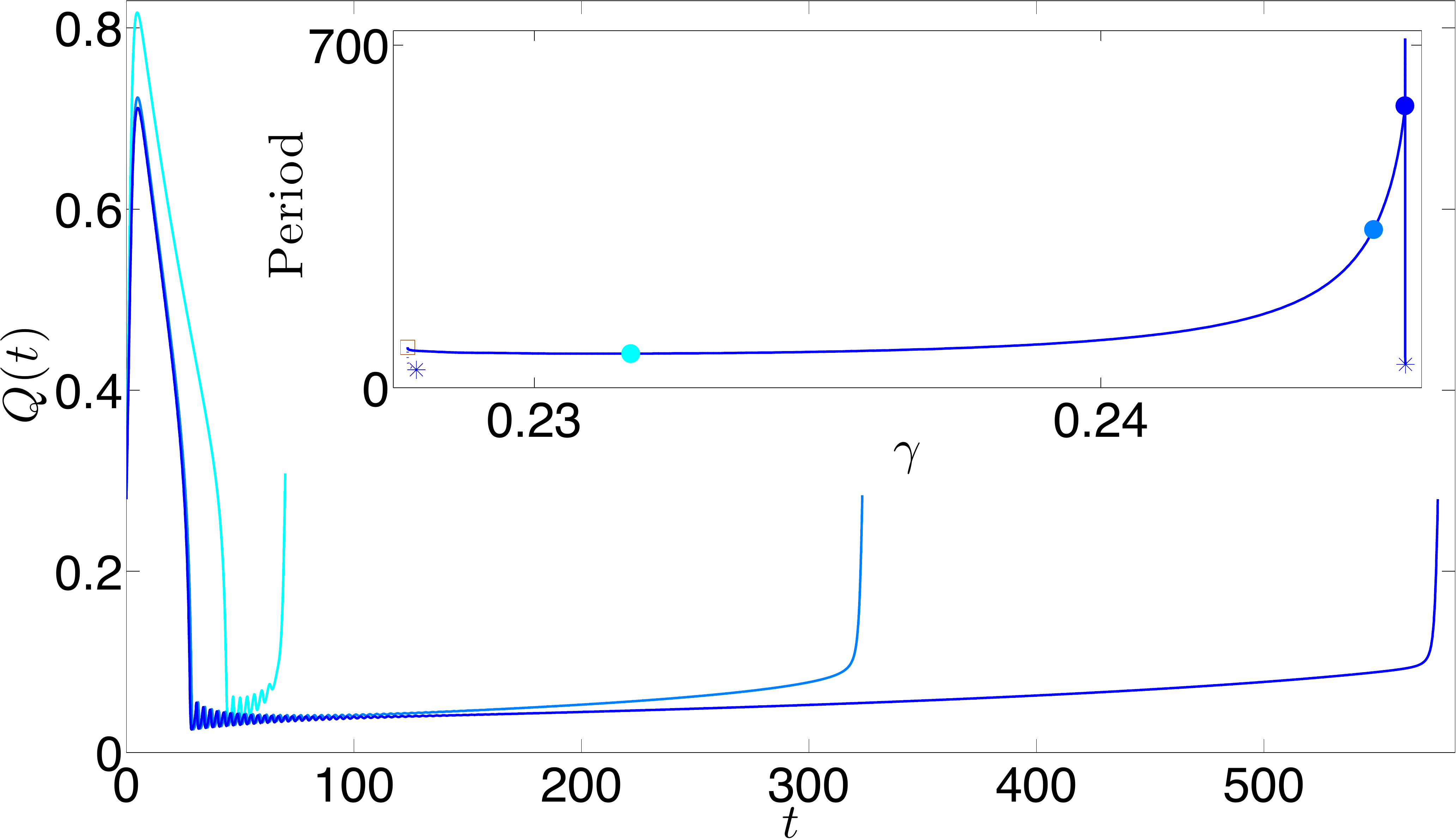}
\put(-245,2.5){\footnotesize{\textit{(i)}}}
\hspace*{1mm}
\includegraphics[width=0.49\textwidth,height=44mm]{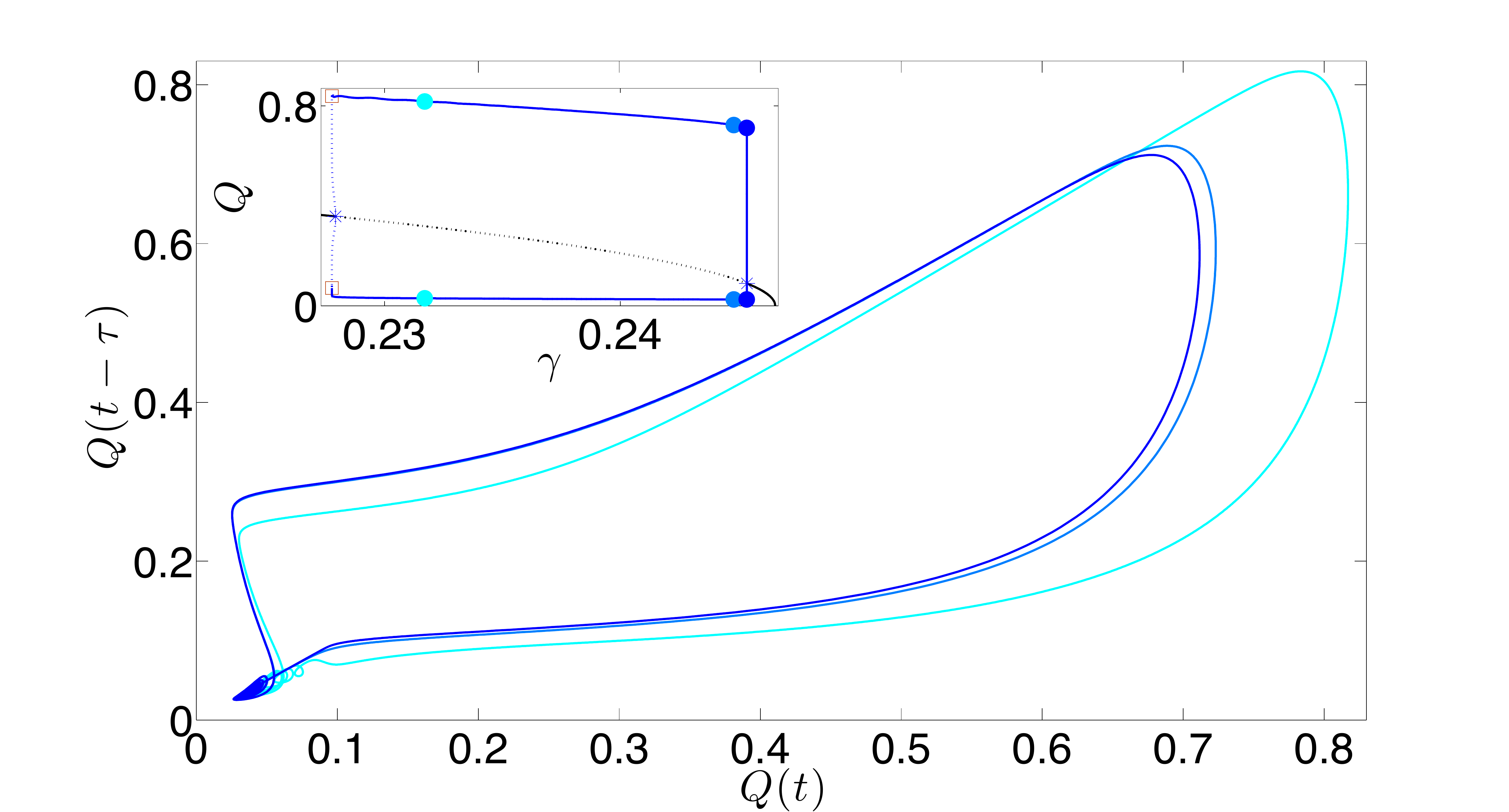}
\put(-245,2.5){\footnotesize{\textit{(ii)}}}
\vspace*{-2mm}
\caption{Three example periodic orbits from the branch shown in Figure~\ref{fig:GammaCont}.
(i) Solution profiles and (ii) the corresponding $(Q(t),Q(t-\tau))$ delay embeddings, with the location of the orbits
on the bifurcation branch indicated on the insets.}
\label{fig:GammaOrb}
\end{figure}

Figure~\ref{fig:GammaCont} shows the results of applying
one parameter continuation in the apoptosis rate $\gamma$, with the other parameters all held at their values in
Table~\ref{tab.model.par}. The steady state $Q^*$ is stable unless $\gamma$ is close to its
upper bound defined by~\eqref{kgt}. There is again a pair of Hopf bifurcations with the left bifurcation
at $\gamma\approx0.227918$ subcritical leading to an unstable periodic orbit with period $\approx36.7$ days at the bifurcation point.
The period grows to about $82$ days at a fold bifurcation of periodic orbits with $\gamma\approx0.227766$.
The periodic orbit becomes stable at the fold bifurcation leading to a very short
interval of bistability between the stable periodic orbit and the stable steady state.
As $\gamma$ is increased from the fold bifurcation the stable periodic orbits gradually decrease in amplitude but increase
in period reaching a maximum period of about $714$ days when
$\gamma\approx0.2453692$. Some of these stable periodic orbits are illustrated in Figure~\ref{fig:GammaOrb}.
These orbits all have a single peak above $Q^*$ which is only achieved once per period. After this peak the value of $Q$ quickly drops to below $0.1$, and there is then a very low amplitude oscillation in $Q$ with a period of about
$3$ days (slightly larger than the delay $\tau=2.8$) which decays in amplitude before the next spike in the number of HSCs. At $\gamma=0.2453692$ equation~\eqref{Q.star} gives that $Q^*=0.089673$ and the longest period orbit shown in
Figure~\ref{fig:GammaOrb} is close to homoclinic to the steady state $Q^*$ ($Q'(t)\approx0.00025$ when $Q(t)=Q^*$).

The long-period orbits appear to be relaxation oscillations; these have been observed and studied previously for~\eqref{Qprime}
~\cite{Colijn2006,Fowler_Mackey_2002}.
Visually, these
solution profiles are more reminiscent of a spiking neuron~\cite{Izhikevich_2007} than what one would naively expect to see in blood cell concentrations.
After the maximum period is achieved at $\gamma\approx0.2453692$ the period declines precipitously to approximately $48$ days at the Hopf bifurcation when $\gamma\approx0.245375$ in an apparent canard explosion.  We
are not aware of a canard being observed in a scalar DDE before, and we will investigate
this phenomenon in Section~\ref{sec.longp.hsc}.
\begin{figure}[t]
\includegraphics[width=0.49\textwidth,height=44mm]{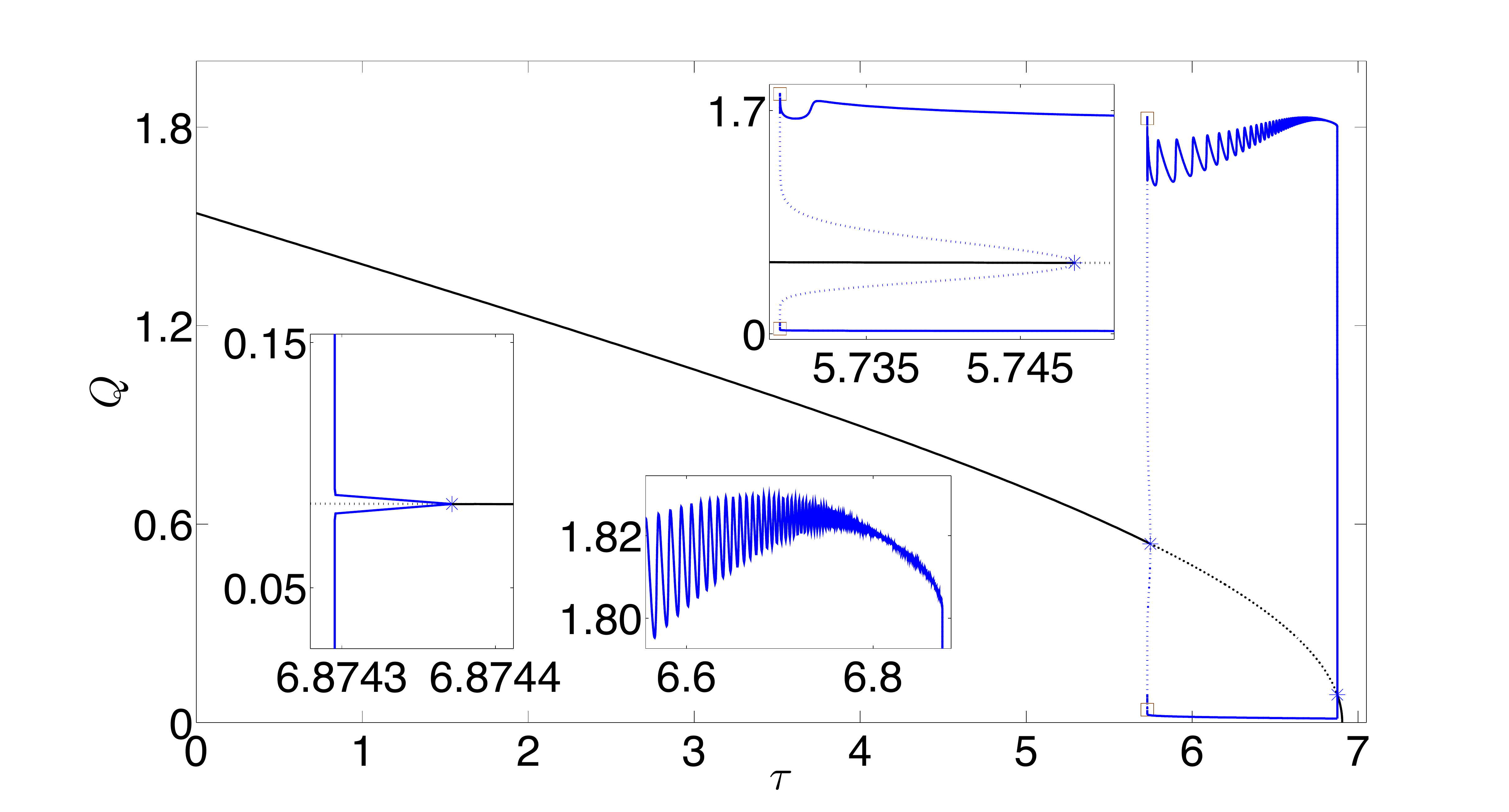}
\put(-245,2.5){\footnotesize{\textit{(i)}}}
\hspace*{1mm}
\includegraphics[width=0.49\textwidth,height=44mm]{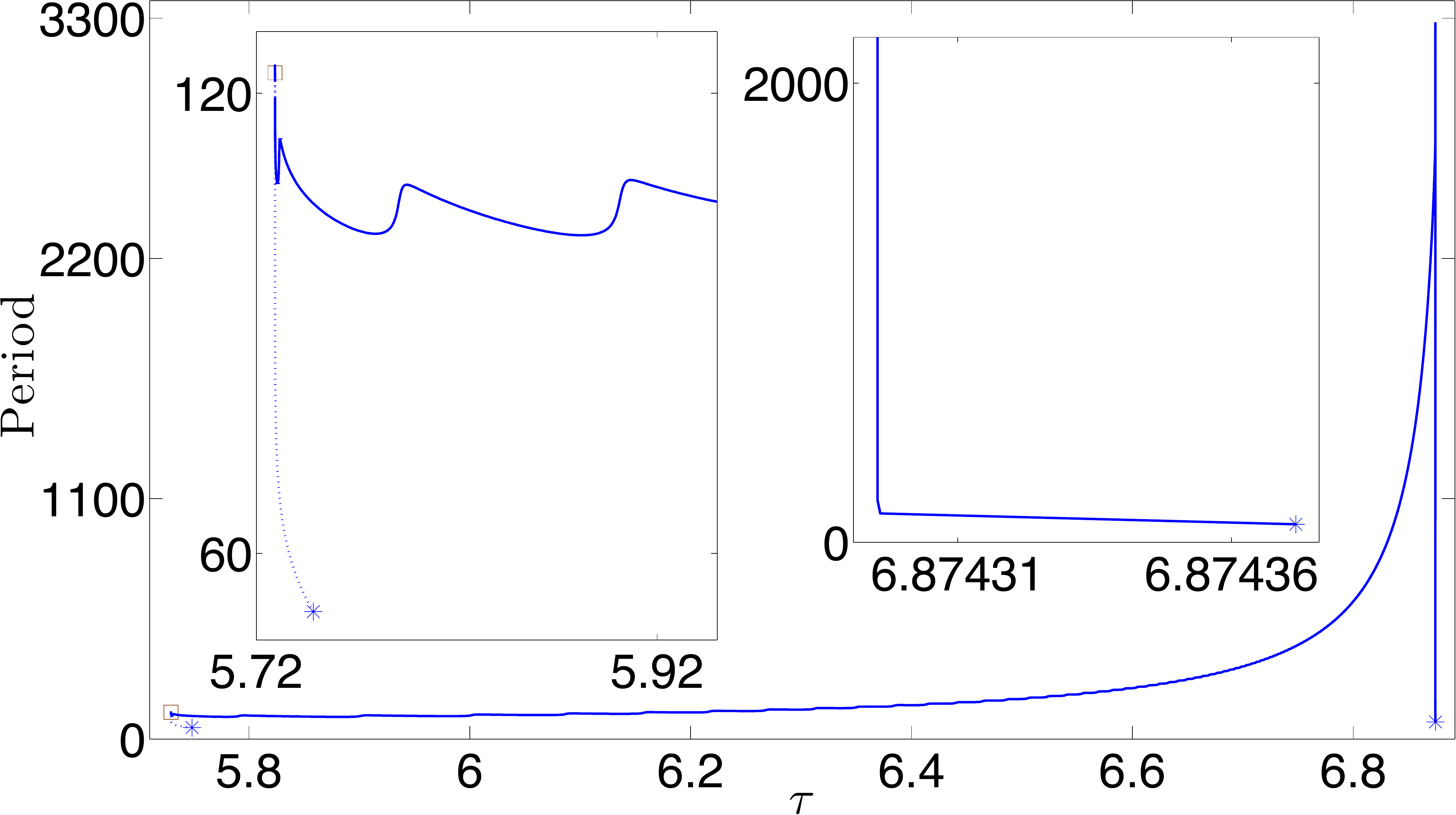}
\put(-245,2.5){\footnotesize{\textit{(ii)}}}
\vspace*{-2mm}
\caption{Continuation in $\tau$ with other parameters taking values from Table~\ref{tab.model.par}.
(i) Bifurcation diagram showing stability of the steady state $Q^*$, along with the branch of periodic orbits
which bifurcates from the steady state at the Hopf bifurcation points at $\tau=5.74851$ and $\tau=6.87437$.
A saddle-node bifurcation seen in the top inset creates an interval for
$\tau\in(5.72939,5.74851)$ of bistability between the periodic orbit and the steady state $Q^*$.
(ii) Shows the period of the periodic orbits seen in (i).}
\label{fig:TauCont}
\end{figure}
\begin{figure}[t]
\includegraphics[width=0.49\textwidth,height=44mm]{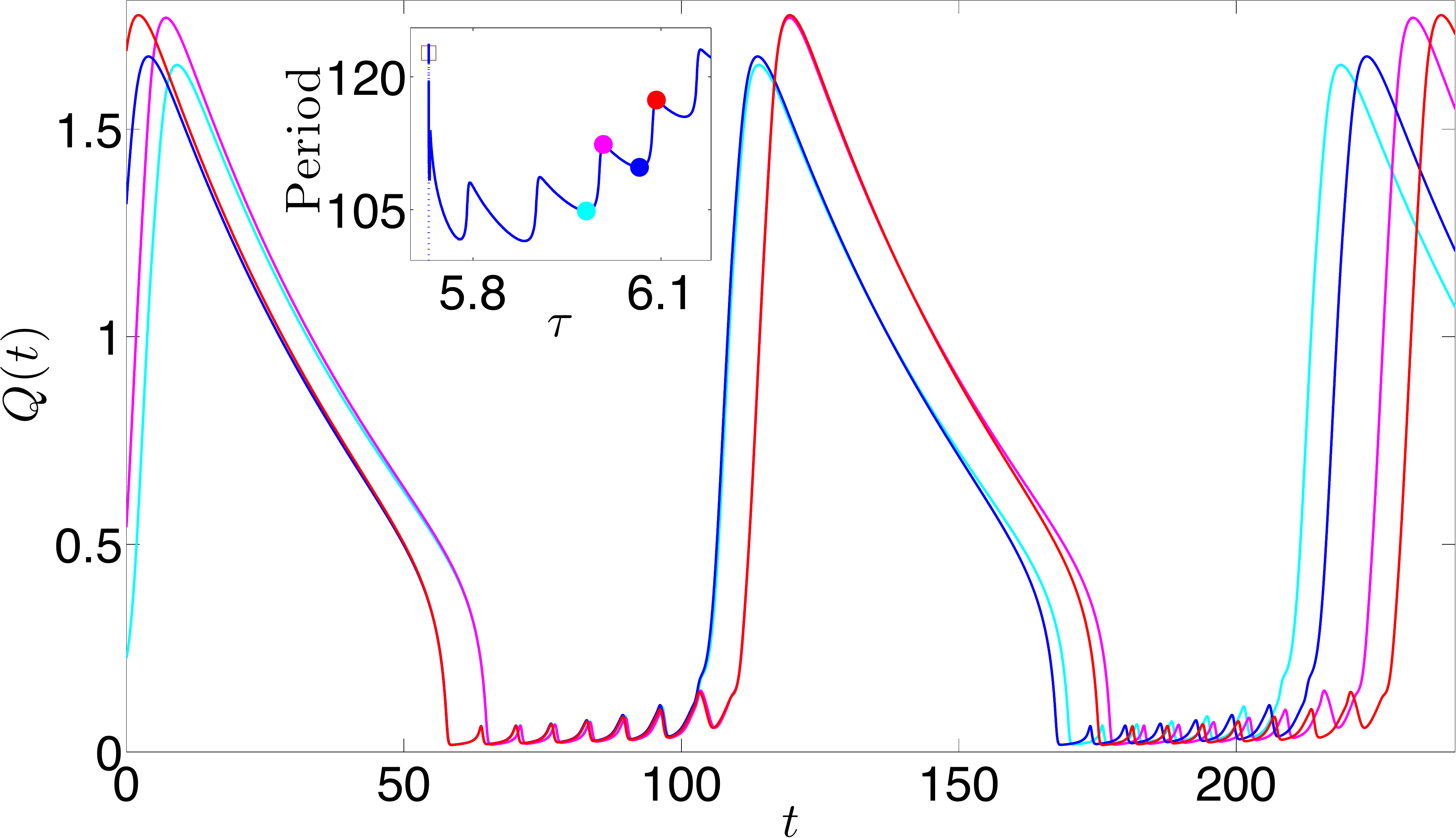}
\put(-245,2.5){\footnotesize{\textit{(i)}}}
\hspace*{1mm}
\includegraphics[width=0.49\textwidth,height=44mm]{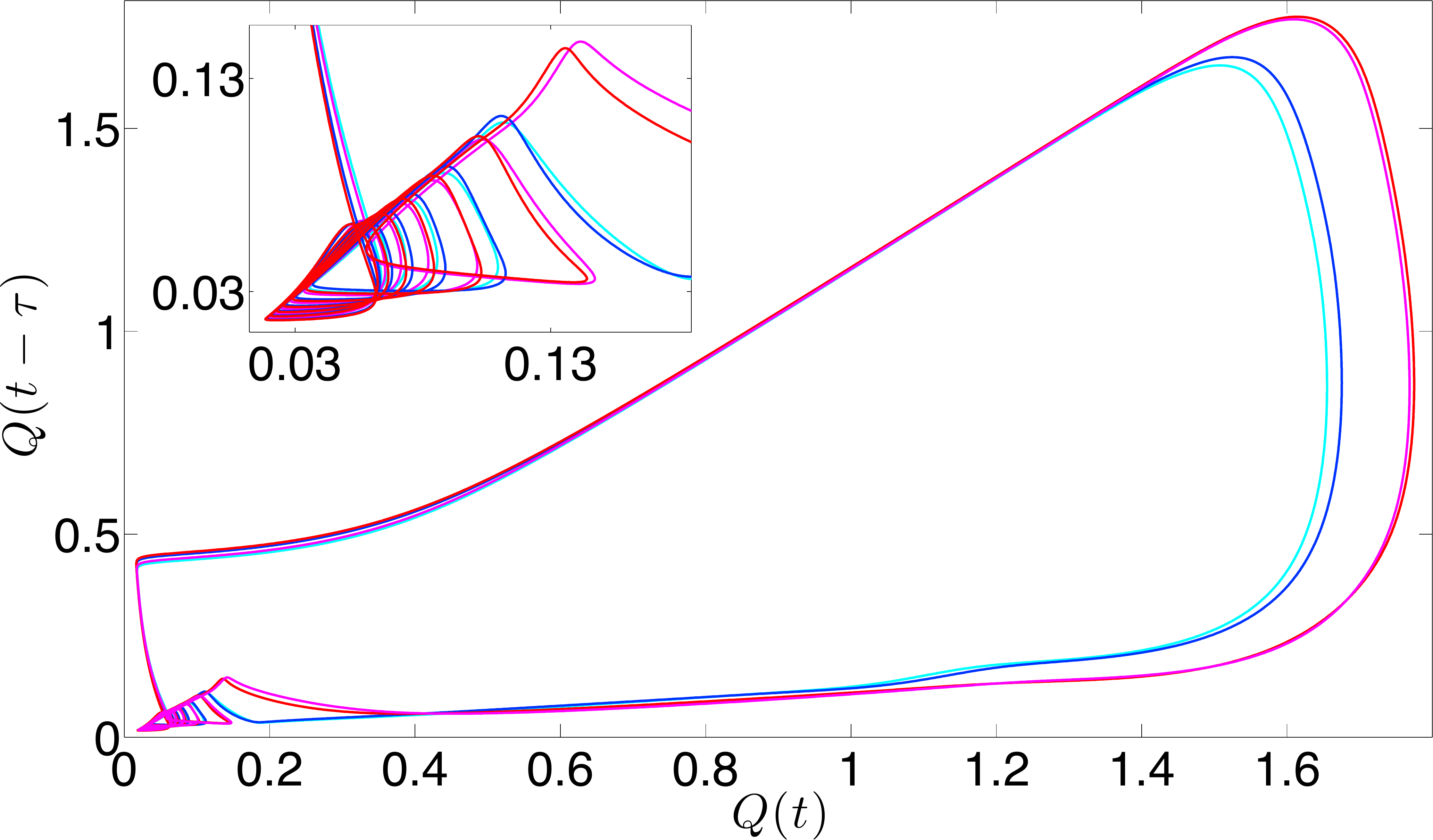}
\put(-245,2.5){\footnotesize{\textit{(ii)}}}
\vspace*{-2mm}
\caption{Four stable periodic orbits from neighbouring peaks and troughs of the ripples illustrate how they arise.
As seen in the inset of the time-embedding plot, the large period and amplitude orbits from the peaks of the ripples perform one more small amplitude oscillation before escaping to $Q(t)\gg Q^*$ compared to the smaller period and amplitude solution from the preceding trough in the ripples.
}
\label{fig:TauOrb}
\end{figure}

In Figure~\ref{fig:TauCont} we present the dynamics observed from applying one parameter continuation in the delay $\tau$, with the other parameters held at their values in Table~\ref{tab.model.par}. The steady state $Q^*$ is
again stable unless the delay $\tau$ is close to its upper bound,
and only unstable for $\tau$ between the pair of Hopf bifurcation points, which occur at $\tau=\tau_1^-$ and $\tau=\tau_1^+$, where $\tau^\pm$ are defined in \eqref{hstabtau}.
The left Hopf point is subcritical, leading to unstable orbits of period about $52$ days, growing to a period
of about $124$ days at a saddle-node bifurcation of periodic orbits with $\tau\approx5.72940$ days, where the periodic orbits become stable, creating an interval of bistability between the steady state and the stable periodic orbits.
Ripples are visible in the amplitude of the branch of stable periodic orbits, with the magnitude of these undulations decreasing to zero as $\tau$ approaches the right Hopf bifurcation point, as shown in the bottom right inset of Figure~\ref{fig:TauCont}(i). There are corresponding ripples in the period of the orbits, visible in the first inset of Figure~\ref{fig:TauCont}(ii).
The other insets show details of the branch of periodic orbits near the Hopf bifurcation points.

Figure~\ref{fig:TauOrb} illustrates stable periodic orbits from the left part of this branch. Although these orbits superficially resemble those of Figure~\ref{fig:GammaOrb}, with a single peak above $Q^*$ and a small amplitude oscillation close to $Q=0$, the periodic orbits seen in Figure~\ref{fig:TauOrb} have a quite different character to those seen
in Figure~\ref{fig:GammaOrb}.
Specifically the orbits have a growing oscillation close to $Q=0$ with a period close to the delay $\tau$. In contrast, the solutions seen in Figure~\ref{fig:GammaOrb} have a decaying oscillation near their minimum value, which is not particularly close to $Q=0$.
Figure~\ref{fig:TauOrb}(ii) shows the delay embedding of the solutions, from which we see that the ripples in the amplitude and period along the branch are related to the number of low amplitude oscillations in the solution, with the smaller amplitude and period solutions at the bottom of the ripples performing one less oscillation in the $(Q(t),Q(t-\tau))$ projection before escaping to $Q(t)\gg Q^*$.

The dynamics of the oscillations close to $Q(t)=0$ are easy to describe but harder to explain. The trivial steady state $Q=0$ has one positive real characteristic value ($\lambda=0.017605$ when $\tau=6$) and infinitely many complex conjugate characteristic values, three pairs of which have positive real part. Thus the steady state $Q=0$ of the DDE
\eqref{Qprime} has a seven-dimensional unstable manifold and an infinite-dimensional stable manifold in the full infinite dimensional phase space of the functional differential equation. However, complex characteristic values would give rise to oscillatory solutions about $Q=0$ which change sign. Since, by Theorem~\ref{theorem.bound}, solutions with positive initial conditions remain positive, oscillations about $Q=0$ will not arise with physiological initial history functions. (Remark: This implies that for any complex characteristic value $\lambda=\alpha\pm i\omega$ that
$|\omega|\geq\pi/\tau$, since otherwise $\tau$ would be less than half the period of the oscillation, and the positive half of the oscillation could be used to define an initial function $\varphi=-\epsilon e^{\alpha t}\sin(\omega t)$ for $t\in[-\tau,0]$ so the DDE that would have a solution close to $Q(t)=-\epsilon e^{\alpha t}\sin(\omega t)$ for $0<t\ll 1$ which would violate the positivity of solutions). Consequently, in the restricted phase space of positive solutions that we consider $Q=0$ has a one-dimensional unstable manifold and a trivial stable manifold. \label{positive}

In the inset of Figure~\ref{fig:TauOrb}(ii) we see that in the delay embedding $(Q(t),Q(t-\tau))$ that $Q(t)$ takes
its minimum value on the periodic orbit when $Q(t-\tau)$ is close to $0.5$ but decreasing. $Q(t)$ then increases
slightly before decreasing again to its next minimum which occurs very close to the minimum of $Q(t-\tau)$ on the
solution. This sets in train a clockwise oscillation in the $(Q(t),Q(t-\tau))$ projection close to $0$. Along this
oscillation the local minima of $Q(t)$ and $Q(t-\tau)$ occur very close to each other in time because the period of this low
amplitude oscillation (as seen in Figure~\ref{fig:TauOrb}(i)) is very close to the delay $\tau$. After the double local
minima the solution grows close to the local unstable manifold of $Q=0$ with $Q(t-\tau)\approx Q(t)e^{-\lambda\tau}$
for a time, until $Q(t-\tau)$ starts to decrease again (towards the previous local minima of $Q(t)$), after which
$Q'(t)$ becomes negative and $Q(t)$ decreases to its next local minima, completing one cycle. The amplitude of this
oscillation grows slightly with each subsequent cycle, until eventually the oscillation escapes to $Q(t)\gg Q^*$.

As $\tau$ increases across the branch of stable orbits the character of the periodic orbits changes
(not illustrated), with the growth rate of the small amplitude oscillations progressively decreasing and the period of the orbit increasing. For $\tau$ sufficiently large the small amplitude oscillations decay instead of grow, and thereafter the periodic orbits resemble the longest period orbit shown in Figure~\ref{fig:GammaOrb}(i).
The period of the orbit, but not the amplitude, continues to grow until the amplitude and period of the solutions
decreases abruptly just before the right bifurcation point, apparently in a canard explosion.
The period reaches its maximum value of $3281$ days for $\tau\approx 6.874295373$ and decreases dramatically to $182$ days while the value of $\tau$ remains constant to 10 significant digits
(see right inset of Figure~\ref{fig:TauCont}(ii)).

Although we do not find a homoclinic bifurcation, the longest period orbit is about 9 years, nearly $500$ times larger than the delay in the system, with the orbit close to homoclinic to the non-trivial steady state $Q^*$.
The solutions in this region are similar to the long-period orbits displayed in Figure~\ref{fig:GammaOrb}.



\subsection{Two-parameter continuation}
\label{sec.2p}
\begin{figure}[t]
\includegraphics[width=0.79\textwidth]{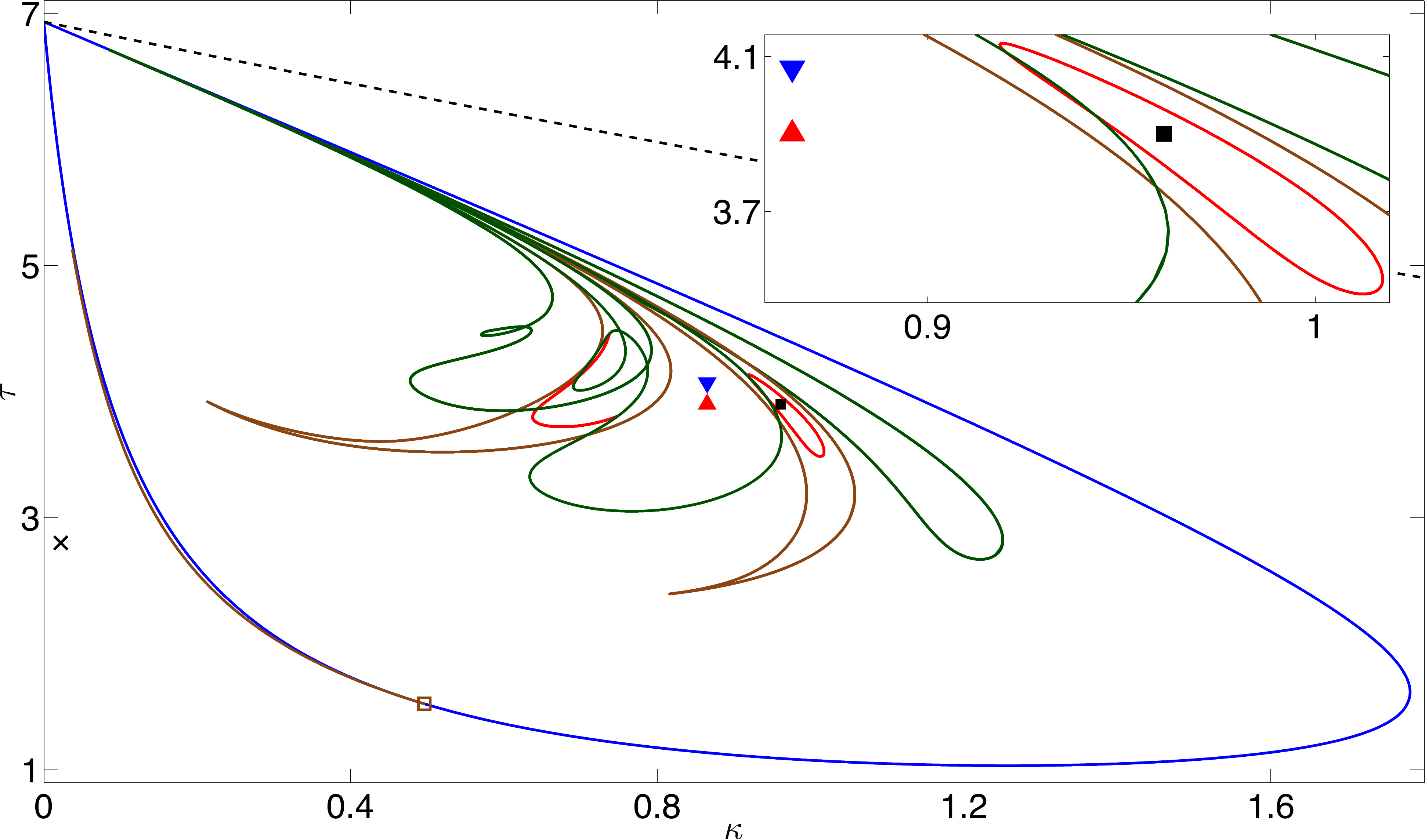}\;\raisebox{0.4cm}{\includegraphics[width=0.2\textwidth]{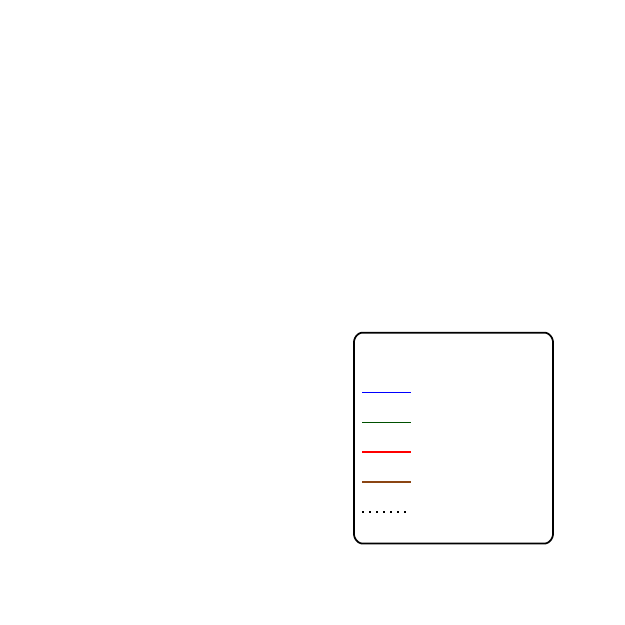}}
\put(-80,102){\footnotesize{\emph{Bifurcation Key}}}
\put(-63,86){\footnotesize{\emph{Hopf}}}
\put(-63,71){\scriptsize{\emph{period-doubling}}}
\put(-63,55){\footnotesize{\emph{torus}}}
\put(-63,42){\footnotesize{\emph{fold}}}
\put(-63,27){\footnotesize{\emph{steady state}}}
\vspace*{-3mm}
\caption{Two parameter $(\kappa,\tau)$-bifurcation diagram for the DDE~\eqref{Qprime} with other parameter values given by Table~\ref{tab.model.par}.
Each solid curve represents the locus of bifurcation of periodic orbits, as indicated in the key.
The brown square \textlarger[0]{$\textcolor{brown1}{\Box}$} denotes a Bautin bifurcation; see text.
The black cross marks the homeostasis values of the parameters. Within the inset, the black square indicates the parameter values of the torus investigated in Section~\ref{sec.torus}, and the red and blue triangles represent chaotic attractors seen in Section~\ref{sec.chaos}.}
\label{fig:KappaTau_2D_Cont}
\end{figure}

Recent versions of DDEBiftool~\cite{Engelborghs_2002,DDEBiftool15} have the facility to perform two-parameter continuation
of bifurcations of periodic orbits, and we used this to study the bifurcations of
the DDE~\eqref{Qprime} as the parameters $\kappa$, $\gamma$ and $\tau$ are varied pairwise.

In Figure~\ref{fig:KappaTau_2D_Cont} we present the two-parameter
bifurcation diagram as $\kappa$ and $\tau$ are varied, with all other parameters at their values in Table~\ref{tab.model.par}, which
reveals the curves of Hopf, period-doubling, saddle-node, torus and steady state bifurcation as this pair of parameters are varied. Taking a straight line through Figure~\ref{fig:KappaTau_2D_Cont} with $\tau=2.8$ or with $\kappa=0.022$ reveals the bifurcations found in Figures~\ref{fig:KappaCont} and~\ref{fig:TauCont} respectively. From
Figure~\ref{fig:KappaTau_2D_Cont} we see that the homeostasis parameters $(\kappa,\tau)=(0.022,2.8)$ are not particularly close to any bifurcations,
with the Hopf curve and an associated curve of saddle-node of limit-cycle bifurcations
being the only other bifurcations near to that part of parameter space.
The Hopf bifurcations are subcritical to the left of the
Bautin or generalised Hopf bifurcation
at $(\kappa,\tau)=(0.4960,1.525)$ and supercritical otherwise.
We already saw instances of the subcritical Hopf bifurcations in Figures~\ref{fig:KappaCont} and~\ref{fig:TauCont}; Bernard \textit{et al}~\cite{Bernard_2004} previously presented an example
with both Hopf bifurcations supercritical.

If the delay is small ($\tau<1$) there are no bifurcations at all, while the bifurcation structures become more complicated as $\tau$ is increased with two curves of fold bifurcations of periodic orbits created in a cusp bifurcation at
$(\kappa,\tau)\approx(0.81600,2.3956)$, and a further cusp bifurcation and torus bifurcation curves only occurring for $\tau>3$.
Figure~\ref{fig:KappaTau_2D_Cont} suggests that for one parameter continuation in $\kappa$, taking $\tau$ close to $4$ will lead to more complicated dynamics than was seen in Figure~\ref{fig:KappaCont} for $\tau=2.8$. Indeed, the curves seen in Figure~\ref{fig:KappaTau_2D_Cont} were seeded by performing a one parameter continuation in $\kappa$ with $\tau=3.9$ (see Figure~\ref{fig2_HSC_Biftool1D_KappaDA}) and consequently Figure~\ref{fig:KappaTau_2D_Cont} shows all the bifurcation curves that cross $\tau=3.9$. There may be other bifurcation curves that remain above $\tau=3.9$, but it appears from Figure~\ref{fig:KappaTau_2D_Cont} that they would be constrained to be near the right Hopf bifurcation.

\begin{figure}[t]
\includegraphics[width=0.79\textwidth]{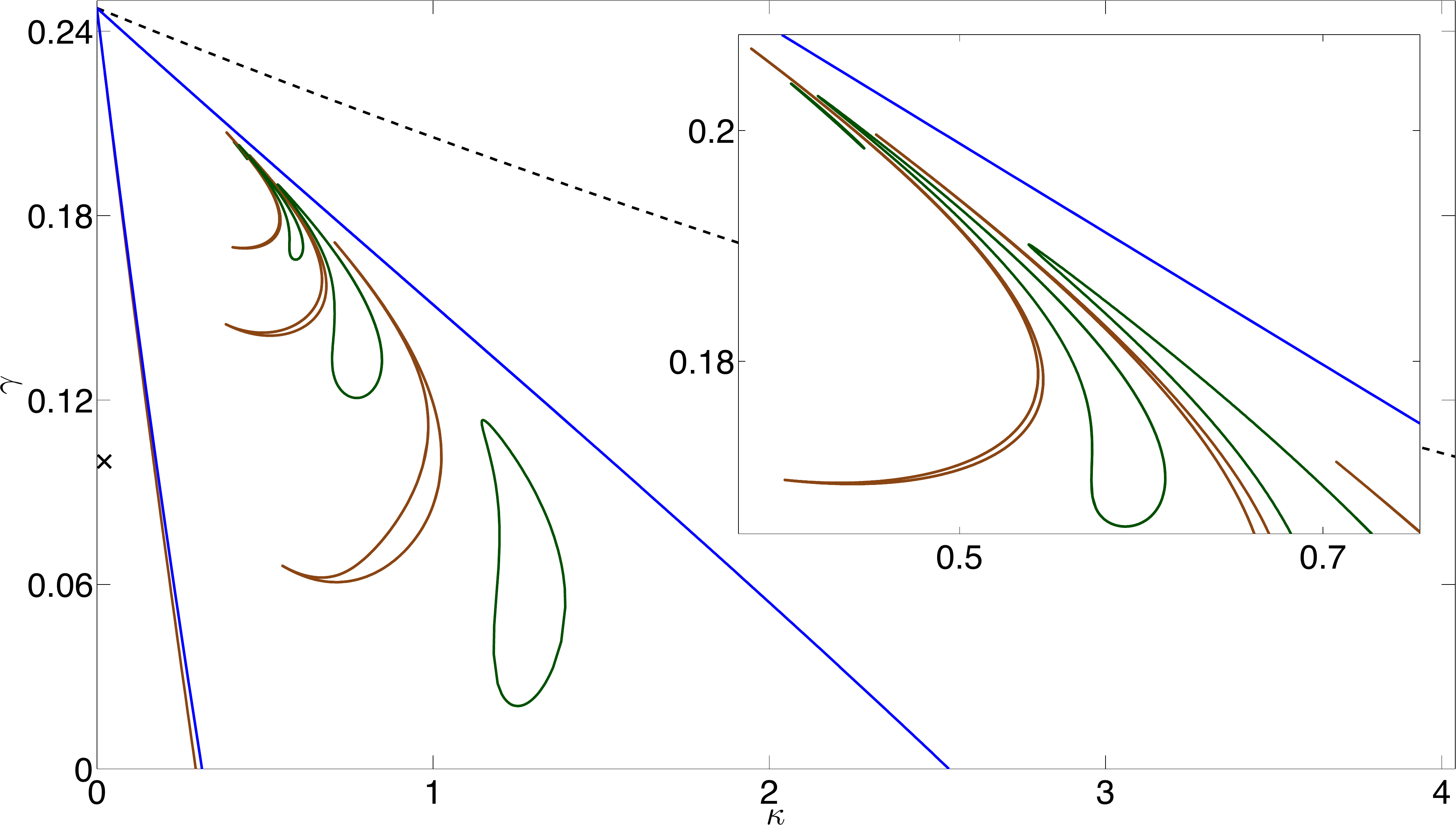}\;\raisebox{0.4cm}{\includegraphics[width=0.2\textwidth]{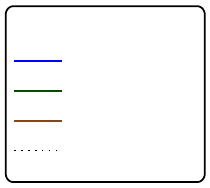}}
\put(-80,83){\footnotesize{\emph{Bifurcation Key}}}
\put(-61,70){\footnotesize{\emph{Hopf}}}
\put(-61,55){\scriptsize{\emph{period-doubling}}}
\put(-61,42){\footnotesize{\emph{fold}}}
\put(-61,27){\footnotesize{\emph{steady state}}}
\vspace*{-3mm}
\caption{Continuation in $(\kappa,\gamma)$ 
with other parameters at homeostasis (see Table~\ref{tab.model.par}).
Each solid curve represents the locus of a certain type of bifurcation of periodic orbits as specified in
the key.
}
\label{fig:GammaKappa_2D_Cont}
\end{figure}

Figure~\ref{fig:GammaKappa_2D_Cont} shows the bifurcation curves found for two-parameter continuation in $(\kappa,\gamma)$ with all the other parameters taking their values from Table~\ref{tab.model.par}, revealing an alternating sequence of curves of period-doubling and fold bifurcations (of limit cycles), and associated cusp bifurcation of limit cycles.
The closed curves of bifurcations become shorter and narrower as parameters
approach the upper bound on $\gamma$ for periodic orbits to exist, and also progressively more delicate to compute numerically. There may be additional curves of bifurcations for $\gamma>0.2$ which we were not able to compute.
A straight line through Figure~\ref{fig:GammaKappa_2D_Cont} with $\gamma=0.1$ or with $\kappa=0.022$
reveals the bifurcations found in Figures~\ref{fig:KappaCont} and~\ref{fig:GammaCont}, respectively.
The inset reveals that the period-doubling and saddle-node loci do not overlap.



\begin{figure}[t]
\centering
\includegraphics[width=0.79\textwidth]{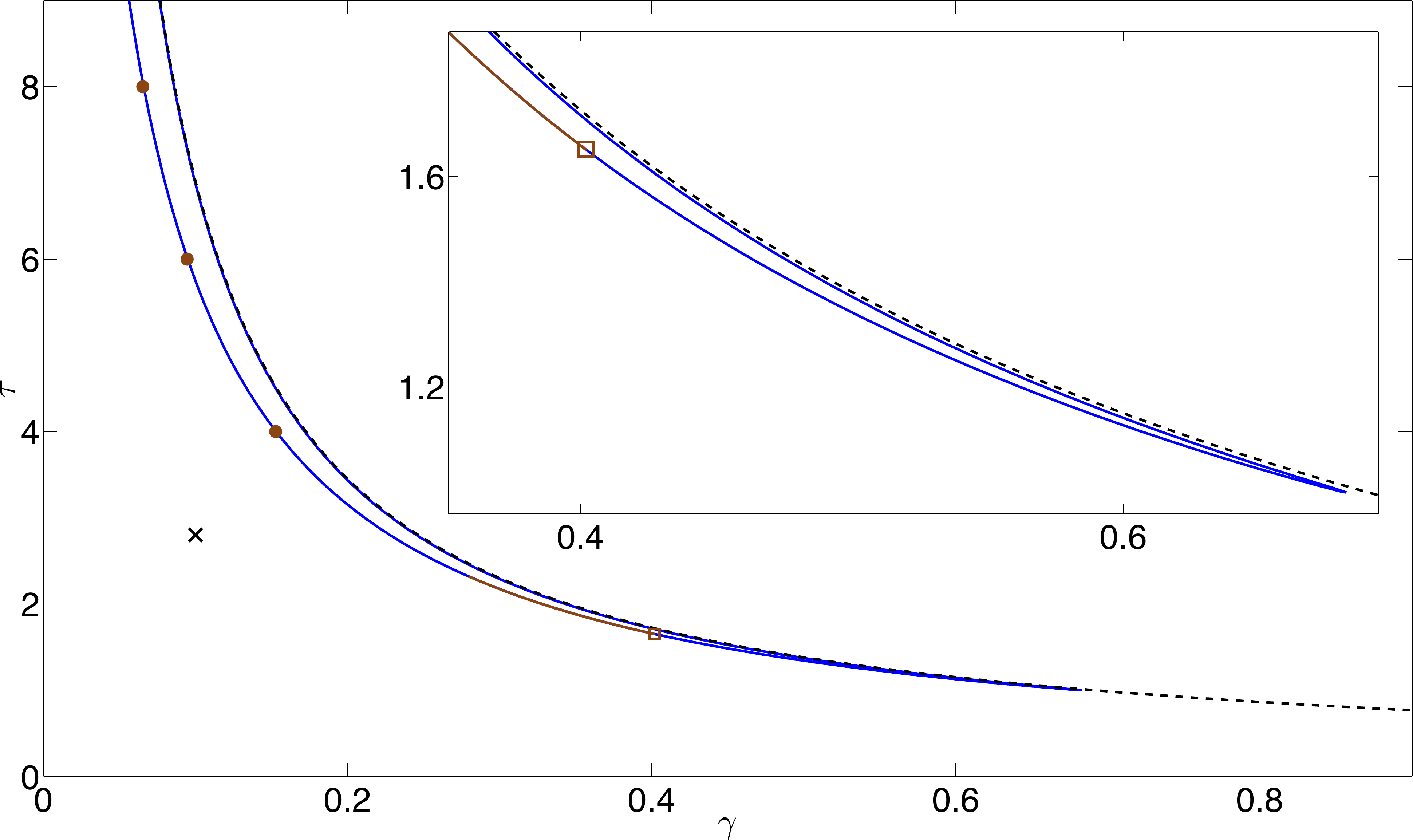}
\vspace*{-2mm}
\caption{Continuation in $(\gamma,\tau)$ 
with other parameters at homeostasis values.
Each solid curve represents the locus of a bifurcation of periodic orbits,
as specified in the key to Figure~\ref{fig:GammaKappa_2D_Cont}.
The brown square \textlarger[0]{$\textcolor{brown1}{\Box}$} denotes a Bautin bifurcation.
The brown dots and arc of brown curve emanating from the Bautin bifurcation form part of a curve of fold bifurcations of limit cycles, which DDEBiftool is only partially able to compute (see text).}
\label{fig:GammaTau_2D_Cont}
\end{figure}

The results of two-parameter continuation in $(\gamma,\tau)$ are shown in Figure~\ref{fig:GammaTau_2D_Cont}.
This reveals the locus of the Hopf bifurcations already observed in Figures~\ref{fig:GammaCont} and~\ref{fig:TauCont}. There is also a Bautin bifurcation at
$(\gamma,\tau)=(0.4020,1.651)$ and a branch of saddle-node bifurcations of limit cycles which emerges this point. This branch represents
the two-parameter continuation of the fold bifurcation seen in Figures~\ref{fig:GammaCont} and~\ref{fig:TauCont}. This bifurcation is very delicate for DDE-Biftool to compute and continue numerically, and we were not able to compute the full branch.
For $\tau=2.8$, $4$, $6$ and $8$ we performed one parameter continuation in $\gamma$ to confirm that the fold bifurcation persists for larger $\tau$ values and also to verify that there are not other bifurcation curves missing from the diagram. For the larger values of $\tau$, DDEBiftool is not able to identify the fold bifurcation. While we are able to find the fold from a one-parameter continuation by simply looking for the minimum value of $\gamma$ along the branch (and we added these points to Figure~\ref{fig:GammaTau_2D_Cont}), DDEBiftool computes and continues fold bifurcations of limit-cycles in two parameters by solving the defining equations for a fold bifurcation of periodic orbits~\cite{DDEBiftool15}, which is a considerably more complicated computation.


The two-parameter continuations in Figures~\ref{fig:KappaTau_2D_Cont},~\ref{fig:GammaKappa_2D_Cont} and~\ref{fig:GammaTau_2D_Cont} reveal that the non-trivial steady-state solution $Q^*>0$ remains
stable for all reasonably small perturbations from the
homeostasis parameter values of Table~\ref{tab.model.par}.
We also see from
Figures~\ref{fig:KappaTau_2D_Cont} and~\ref{fig:GammaTau_2D_Cont} that $Q^*$ is stable for all small delays $\tau<1$ (at least when the other parameters are varied one at a time). This suggests that an ODE model would not capture the instabilities driven by the delays. Since the cell-cycle time for stem cells is estimated to be much larger than one day ($2.8$ days in Craig~\cite{Craig_2016}) it is essential to include the delay in the DDE model~\eqref{Qprime} to properly capture the possible dynamics of the system.
That the two-parameter continuations in $(\kappa,\gamma)$ and $(\gamma,\tau)$ reveal less interesting bifurcation diagrams than for continuation in $(\kappa,\tau)$, is probably not intrinsic to the properties of the parameters in the model, but rather determined by the homeostasis value of the third parameter from Table~\ref{tab.model.par} when we perform two-parameter continuation. More complicated bifurcation diagrams can be generated by taking $(\kappa,\tau)$ close to
$(0.86,4)$, in the interesting part of Figure~\ref{fig:KappaTau_2D_Cont}, and then doing two-parameter continuation in any pair of these three parameters. For example, with $\kappa=0.68$, two-parameter continuation in $(\gamma,\tau)$ (not shown)
reveals torus and period-doubling curves, quite unlike anything seen in
Figure~\ref{fig:GammaTau_2D_Cont}. However, here we have based our continuation on using the homeostasis values of the parameters from Table~\ref{tab.model.par} to start one and two-parameter continuations. If we allowed all the parameters to vary its likely that we could find more exotic dynamics, but what the relationship, if any, that dynamics would have to
Burns-Tannock HSC model is not clear.



\section{Long Period Orbits and Canard Explosion}
\label{sec.longp.hsc}


A canard explosion is a dynamical phenomenon seen in fast-slow or singularly perturbed systems whereby over an
exponentially small range of the continuation parameter a periodic orbit is transformed into a long period relaxation oscillation.
For ODEs this requires at least two space dimensions, with classical examples being the van der Pol oscillator and Fitzhugh-Nagumo equations~\cite{Benoit1981,Wech13}.
Canard explosions have already been explored in DDEs~\cite{CSE09,KrupaTouboul2016}, but only in systems with at least two spatial dimensions
that incorporate a delay. However, since DDEs are inherently infinite dimensional there
is no reason why a canard explosion should not be seen in a scalar DDE such as~\eqref{Qprime}.

\begin{figure}[t]
\includegraphics[width=0.49\textwidth,height=44mm]{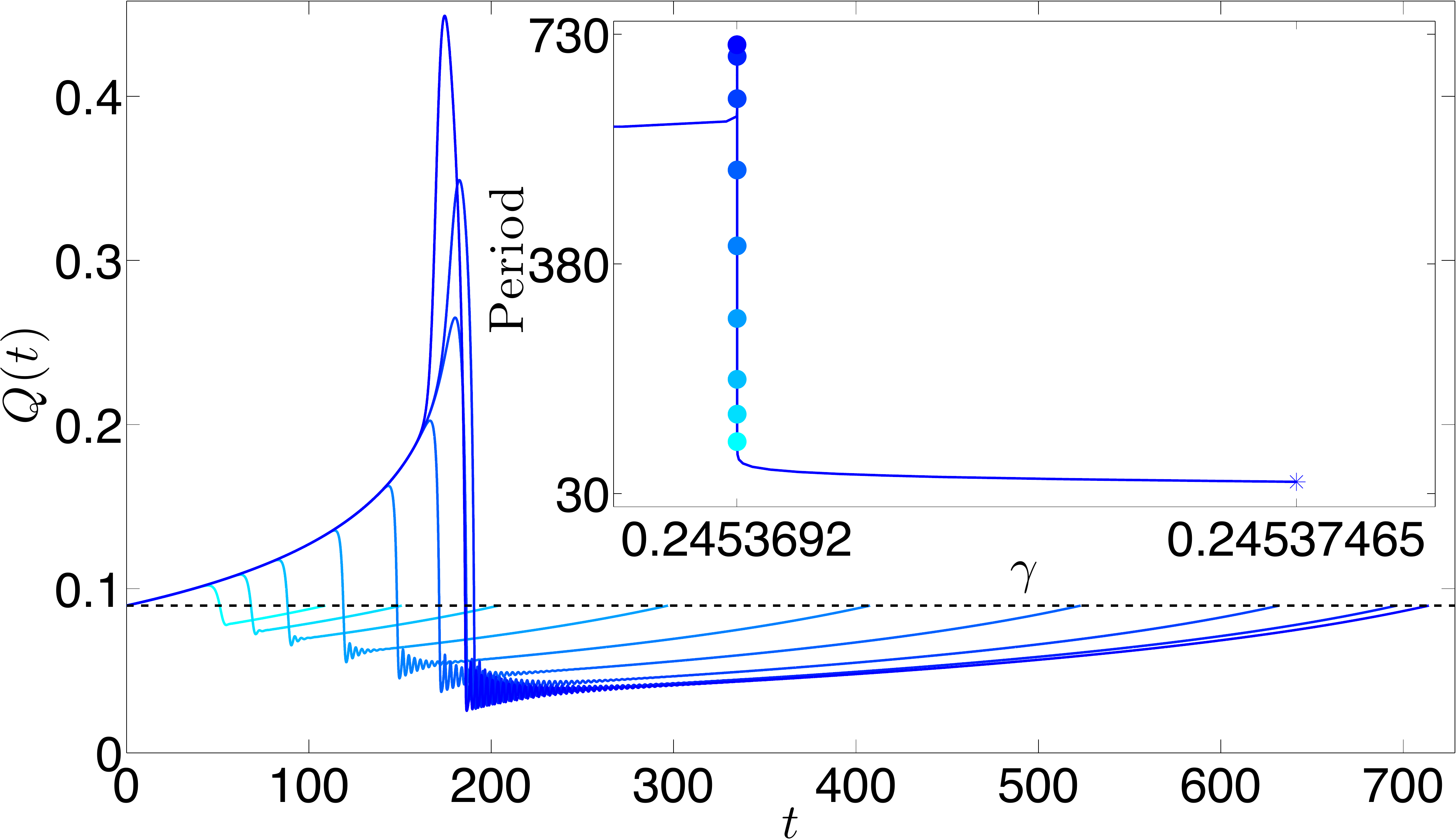}
\put(-245,2.5){\footnotesize{\textit{(i)}}}
\hspace*{1mm}
\includegraphics[width=0.4900\textwidth,height=44mm]{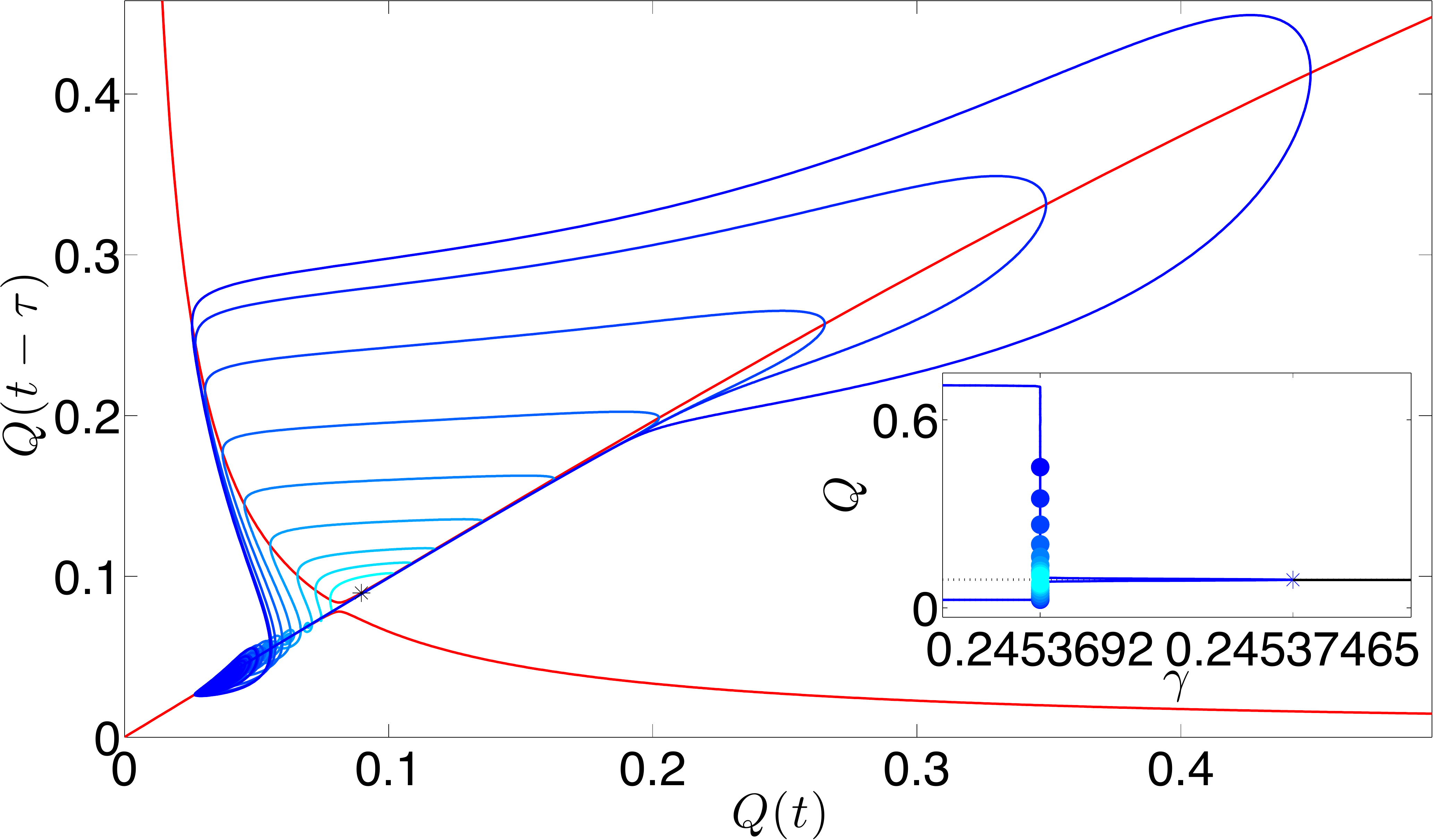}
\put(-245,2.5){\footnotesize{\textit{(ii)}}}

\vspace*{2mm}

\includegraphics[width=0.49\textwidth,height=44mm]{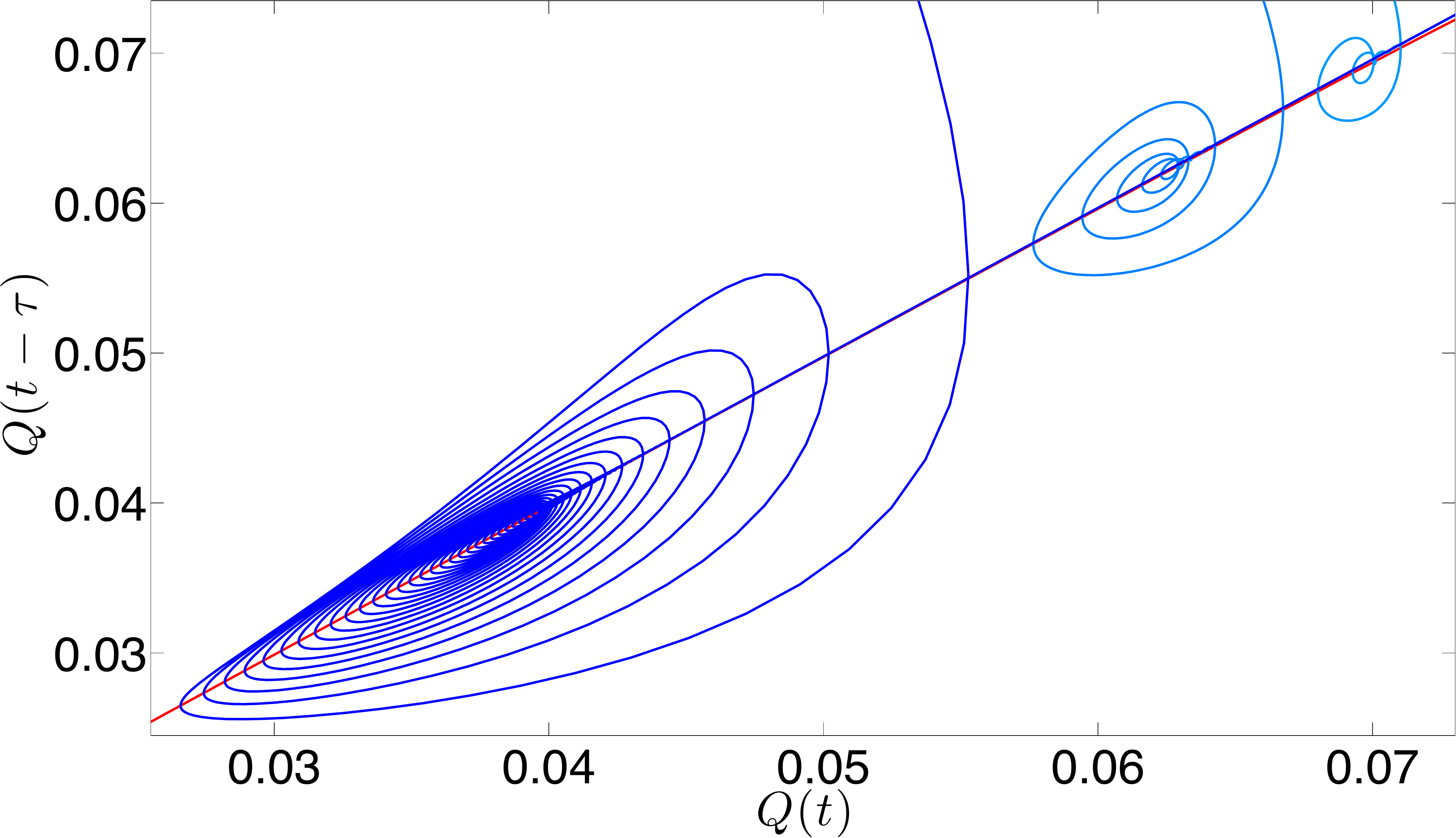}
\put(-245,2.5){\footnotesize{\textit{(iii)}}}
\hspace*{1mm}
\includegraphics[width=0.49\textwidth,height=44mm]{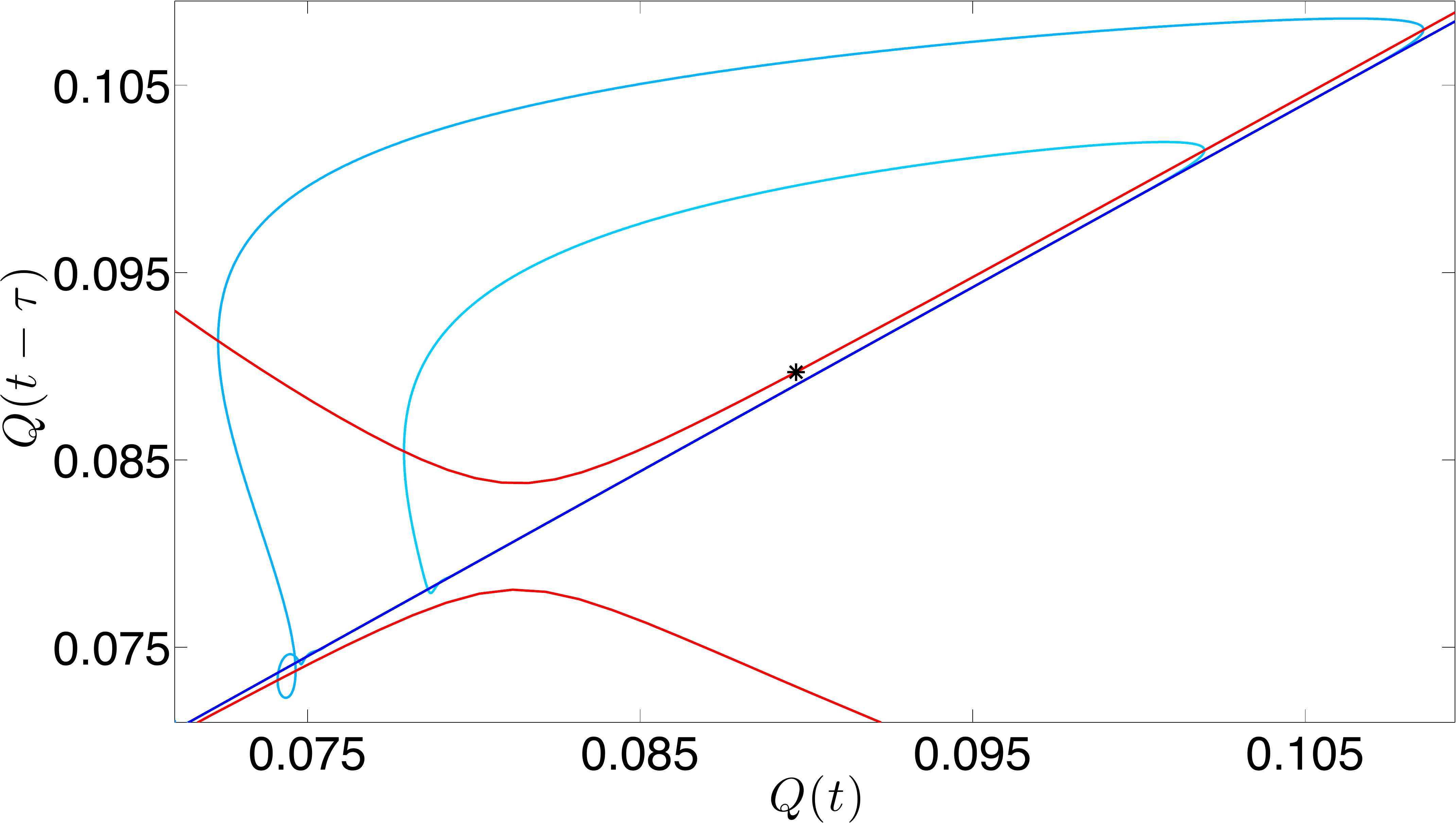}
\put(-245,2.5){\footnotesize{\textit{(iv)}}}
\vspace*{-2mm}
\caption{(i) Solution profiles and (ii) time-delay embedding $(Q(t),Q(t-\tau))$ of periodic solutions from the same $\gamma$ continuation shown in
Figure~\ref{fig:GammaCont}, show an apparent canard explosion at $\gamma\approx0.2453692$.
The insets indicate where the illustrated orbits lie on the branch. (iii) and (iv) shows parts of (ii) at a much larger scale. (iii) reveals the oscillatory convergence of the solutions onto a slow manifold for $Q<Q^*$. (iv) shows the  dynamics near to $Q^*$.
The nullcline $Q'(t)=0$ is indicated by the red curves in panels (ii)-(iv).}
\label{fig:GammaCont_Canard}
\end{figure}

Recalling the continuation in $\gamma$ shown in Figure~\ref{fig:GammaCont}, at the right Hopf bifurcation point
$\gamma\approx0.2453746$ a periodic orbit is born with period approximately $48$ days, but at
$\gamma\approx0.2453692$ the period increases dramatically to about $700$ days while
the value of $\gamma$ remains constant to 7 significant figures.
This would appear to be a canard explosion.
Figure~\ref{fig:GammaCont_Canard} illustrates orbits from this part of the branch as the period increases.
Comparing the time plot with the $(Q(t),Q(t-\tau))$ phase space projection, the slow manifold appears to be close
to $Q(t)=Q(t-\tau)$, with $Q'(t)$ gradually increasing along this curve, followed by a fast transition layer as $Q(t)$
decreases to close to its minimum value while $Q(t-\tau)$ remains close to its maximum. Then $Q(t-\tau)$ passes through
the transition layer to also be close to its minimum value after which there is a slowly decaying oscillation with a period of about $3$ days as the solution converges back to the slow manifold.
The largest period orbit illustrated has a period of about $701.3$ days, with $Q(t)$ crossing the steady state $Q^*$
once in each direction, with $Q(t)>Q^*$ for approximately $191.3$ days and $Q(t)<Q^*$ for the remaining $510.0$ days.
Here the delay $\tau=2.8$ days, so this is an example of a (very) slowly oscillating periodic solution.

Fast-slow systems in ODEs often have separate fast and slow variables which can be considered separately in the fast and slow subsystems. That separation of variables does not occur in the DDE~\eqref{Qprime}, for which we have only one variable. Nevertheless, relaxation oscillators with both fast and slow segments within the solution can arise and have been studied in DDEs, by tackling the fast and slow segments of the solution separately. In particular a relaxation oscillator for the HSC DDE~\eqref{Qprime} has been studied using singular perturbation analysis~\cite{Colijn2006,Fowler_Mackey_2002}. Of particular note is the extensive work of Mallet-Paret and Nussbaum studying slowly oscillating periodic solutions in singularly perturbed constant and state-dependent DDEs~\cite{JMPRN86,JMPRNIII,JMPRN11}.

A complete analysis of how the canard explosion arises in~\eqref{Qprime} will be beyond the scope of this work, but we will show that equation~\eqref{Qprime} can be considered as singular perturbation problem. We identify the critical manifold, and also investigate its persistence by approximating the resulting slow manifold and studying its stability.
We will show that a segment of this manifold for $Q<Q^*$ is stable with oscillatory convergence of nearby trajectories onto the manifold (see Figures~\ref{fig:GammaCont_Canard}(ii) and (iii))
while a segment for $Q>Q^*$ is unstable, leading to the divergence of trajectories from the manifold. In the current work, we will not study the fast dynamics in the transition layer.

For $\gamma=0.2453692$ and all other parameters taking their values from
Table~\ref{tab.model.par}, we notice that $\kappa\approx0$ and $A\approx1$, so we introduce the perturbation parameter $\epsilon\geq0$ and let
\begin{equation} \label{eq:epspars}
\epsilon=A-1=2e^{-\gamma\tau}-1,
\qquad
\kappa=\frac{\epsilon f}{C}.
\end{equation}
For the non-zero steady state $Q^*$ to exist the inequality \eqref{kgt} must hold; equivalently
the constant $C$ must satisfy $C>1$. Then $Q^*>0$ is given from \eqref{Q.star} by
\begin{equation} \label{Qstarsing}
Q^*=\theta(C-1)^{1/s},
\end{equation}
independent of the value of $\epsilon>0$.
For the parameters used in Figure~\ref{fig:GammaCont_Canard} we have
$\epsilon=6.132\times10^{-3}$ and $C=2.23$ with $Q^*=0.0896868$
when $\gamma=0.2453692$. The parameter definitions in \eqref{eq:epspars} could be applied to
the non-dimensionalised equation~\eqref{dQdt_hat} with $\hat{A}=\epsilon$ and $\hat{\kappa}=\epsilon/C$,
but we prefer to continue to study~\eqref{Qprime} directly.

Letting $h(Q)=Q\beta(Q)$, as in~\eqref{hx}, and
using \eqref{eq:epspars} we re-write \eqref{Qprime}  as
\begin{equation} \label{eq:epszeroman}
Q^{\prime}(t) = -\frac{\epsilon f}{C}Q(t)-h(Q(t))+(1+\epsilon)h(Q(t-\tau)).
\end{equation}
When $\epsilon=0$ this reduces to
\begin{equation} \label{eq:epszero}
Q^{\prime}(t) =-h(Q(t))+h(Q(t-\tau))=
-\frac{fQ(t)}{1+(Q(t)/\theta)^s}+\frac{fQ(t-\tau)}{1+(Q(t-\tau)/\theta)^s}.
\end{equation}
While equation \eqref{eq:epszeroman} has the unique positive steady state $Q^*$ given by \eqref{Qstarsing},
when $\epsilon=0$ equation \eqref{eq:epszero} has a line of equilibria with $Q$ being an arbitrary constant,
which is the critical manifold.
The linearisation of \eqref{eq:epszero} is given by \eqref{dQdtLin} with
$a=-h'(Q)$ and $b=-a$ and so the characteristic function \eqref{hayes} becomes
$$p(\lambda)= \lambda - a + a\mathnormal{e}^{-\lambda\tau}.$$
This satisfies $p(0)=0$ and $p'(0)=1-a\tau$, and has $\lambda=0$ as a solution for any value of $a$. There is an additional real negative root if $p'(0)>0$,  i.e. $a\tau<1$. This root crosses zero when $p'(0)=0$ and becomes positive for $a\tau>1$ when $p'(0)<0$. Thus the steady state stability changes when
$h'(Q)=-1/\tau$.
Using \eqref{beta2} and \eqref{hx} and the non-dimensionalised variables of \eqref{dQdt_hat}
the identity $h'(Q)=-1/\tau$ reduces to a quadratic equation for $\hat Q^s$:
$$\hat{Q}^{2s}+(2-(s-1)\hat f)\hat{Q}^{s}+1+\hat{f}=0.$$
Solving this with parameters corresponding to Figure~\ref{fig:GammaCont_Canard} we find that the stability on the critical manifold changes when $Q=\theta\hat Q=0.0893174$, very close to the value $Q^*$.

The critical manifold should persist where it is transversally hyperbolic as a slow manifold
following the theory of Fenichel \cite{Fenichel_1979}. However, that theory was developed for ODEs in multiple space dimensions with at least one fast and one slow variable. Likewise, the previous examples of canards in DDEs
considered systems with two spatial dimensions with one fast and one slow variable~\cite{CSE09,KrupaTouboul2016}.
For the scalar DDE \eqref{Qprime} there is not an obvious separation into fast and slow variables, and it is not apparent how to proceed rigourously. Nevertheless, it is apparent from
Figure~\ref{fig:GammaCont_Canard} that there is a slow manifold, and in the remainder of this section we will show how to approximate the slow manifold and determine its stability.

The slow manifold on which $Q'\approx0$ should be close to the nullcline $Q'(t)=0$, which
from~\eqref{Qprime} is given by
\begin{equation} \label{eq:critman}
0 = -\frac{\kappa}{f}Q(t)-\frac{Q(t)}{1+(Q(t)/\theta)^s}+2e^{-\gamma\tau}\frac{Q(t-\tau)}{1+(Q(t-\tau)/\theta)^s}.
\end{equation}
If one of $Q(t)$ or $Q(t-\tau)$ is fixed, then for $Q'(t)=0$ with $s=2$
from~\eqref{eq:critman} the value of the other one is defined by a cubic equation. The resulting nullcline is displayed as the two red curves in
Figure~\ref{fig:GammaCont_Canard}(ii), which are seen to be disjoint in
Figure~\ref{fig:GammaCont_Canard}(iv) which shows an expanded view near to $(Q^*,Q^*)$.
Typically, canard explosions are seen close to a bifurcation of the intersections of the nullclines of the slow and fast variables. Here we do not have separate fast and slow variables,
but we see that we are close to a bifurcation of the $Q'(t)=0$ nullcline itself, with the two disjoint parts coming very close to each other in Figure~\ref{fig:GammaCont_Canard}(iv). Figure~\ref{fig:GammaCont_Canard}(iv) also shows that
the periodic orbits that form the canard explosion appear to lie on a slow manifold between the branches of the nullcline and switch from following the lower branch to following the upper branch at the point close to $Q^*$ where the two curves are closest.
It is thus likely essential for the canard explosion that the parameter set is close to this bifurcation of the nullcline structure.

To obtain a simple approximation to the slow manifold, let $h(Q)=Q\beta(Q)$, as in~\eqref{hx}, so
\eqref{Qprime} becomes\begin{equation}\label{dQdt3}
Q^{\prime}(t)  = -\kappa Q(t)- h(Q(t))+Ah(Q(t-\tau)).
\end{equation}
Then use the approximation
\begin{equation} \label{hQtauapprox}
h(Q(t-\tau))\approx h(Q(t))-\tau \frac{d}{dt}h(Q(t))=h(Q(t))-\tau Q'(t) h'(Q(t)),
\end{equation}
to remove the delay from~\eqref{dQdt3}. Note that $\tau\gg0$ so~\eqref{hQtauapprox} is only useful if
$\tau^2\frac{d^2}{dt^2}h(Q(t))\ll1$. Substituting~\eqref{hQtauapprox} into~\eqref{dQdt3} and rearranging
we obtain
$$Q'(t)=\frac{-\kappa Q(t)+(A-1)h(Q(t))}{1+A\tau h'(Q(t))}.$$
On the slow manifold $Q(t-\tau)\approx Q(t)-\tau Q'(t)$ so we can approximate the manifold in the
delay embedding $(Q(t),Q(t-\tau))$ by the curve
$(Q,Q_\tau)$ where
\begin{equation} \label{eq:Qtaucanardslowman}
Q_\tau =
\frac{(1+\kappa\tau+A\tau h'(Q))Q-\tau(A-1)h(Q))}{1+A\tau h'(Q)}.
\end{equation}
This gives a good approximation to the slow manifold for $Q\in(0,0.23)$,
except for a very small interval $|Q-Q^*|<0.003$ about the non-trivial steady state.
This is shown in Figure~\ref{fig:linearslowman}(i).
The curve $(Q,Q_\tau)$ is also an approximation to the unstable manifold of $Q=0$; the trivial steady state has a single positive characteristic value, and infinitely many pairs of complex conjugate characteristic values with negative real part. The canard explosion is \emph{not} associated with a solution homoclinic to $Q=0$ though; no such homoclinic solution can exist in the space of non-negative solutions, as all solutions in the stable manifold of $Q=0$ will be oscillatory and violate the positivity of solutions (as already noted on page~\pageref{positive}).

\begin{figure}[t]
\includegraphics[width=0.49\textwidth,height=44mm]{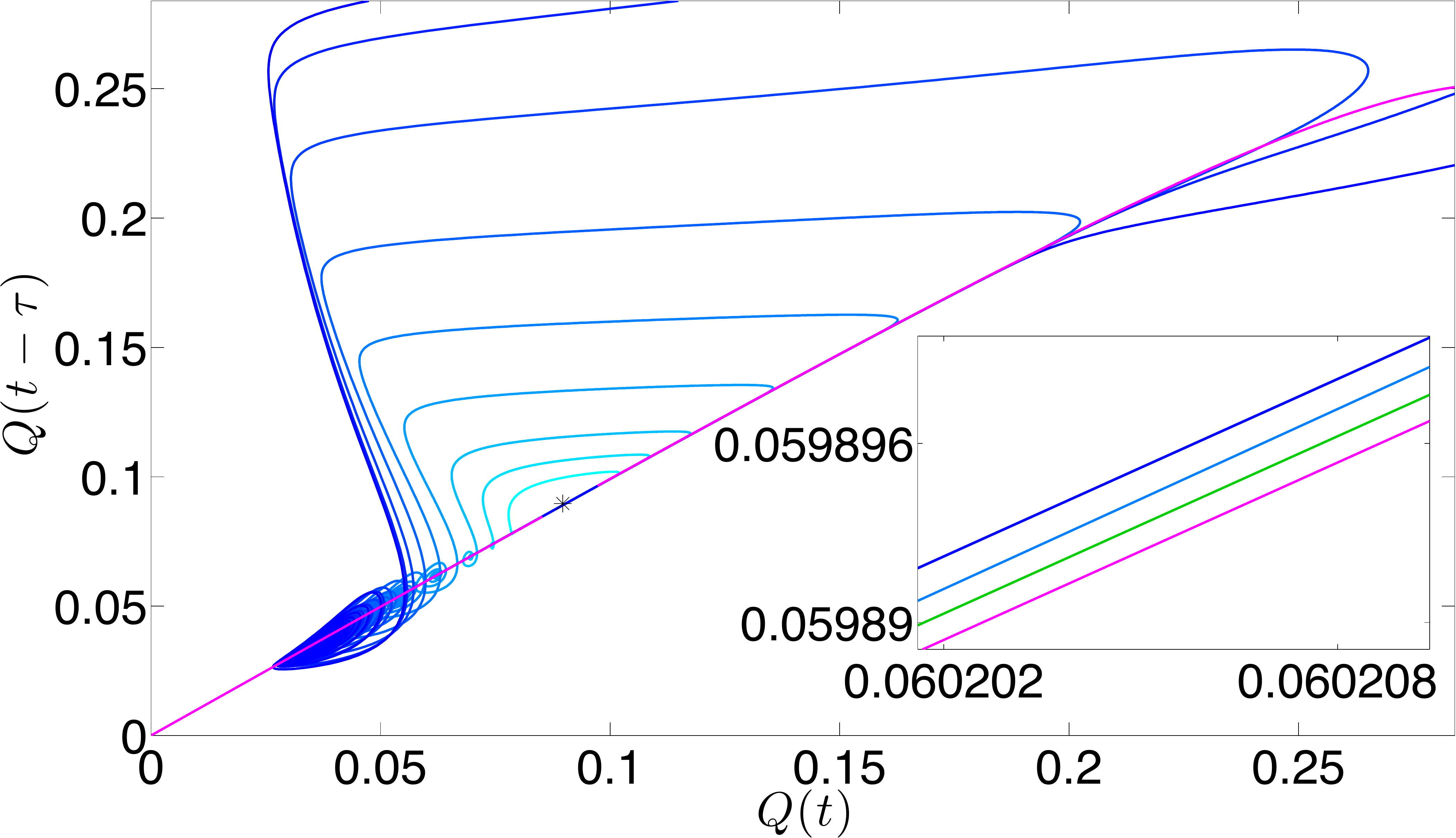}
\put(-245,2.5){\footnotesize{\textit{(i)}}}
\hspace*{1mm}
\includegraphics[width=0.49\textwidth,height=44mm]{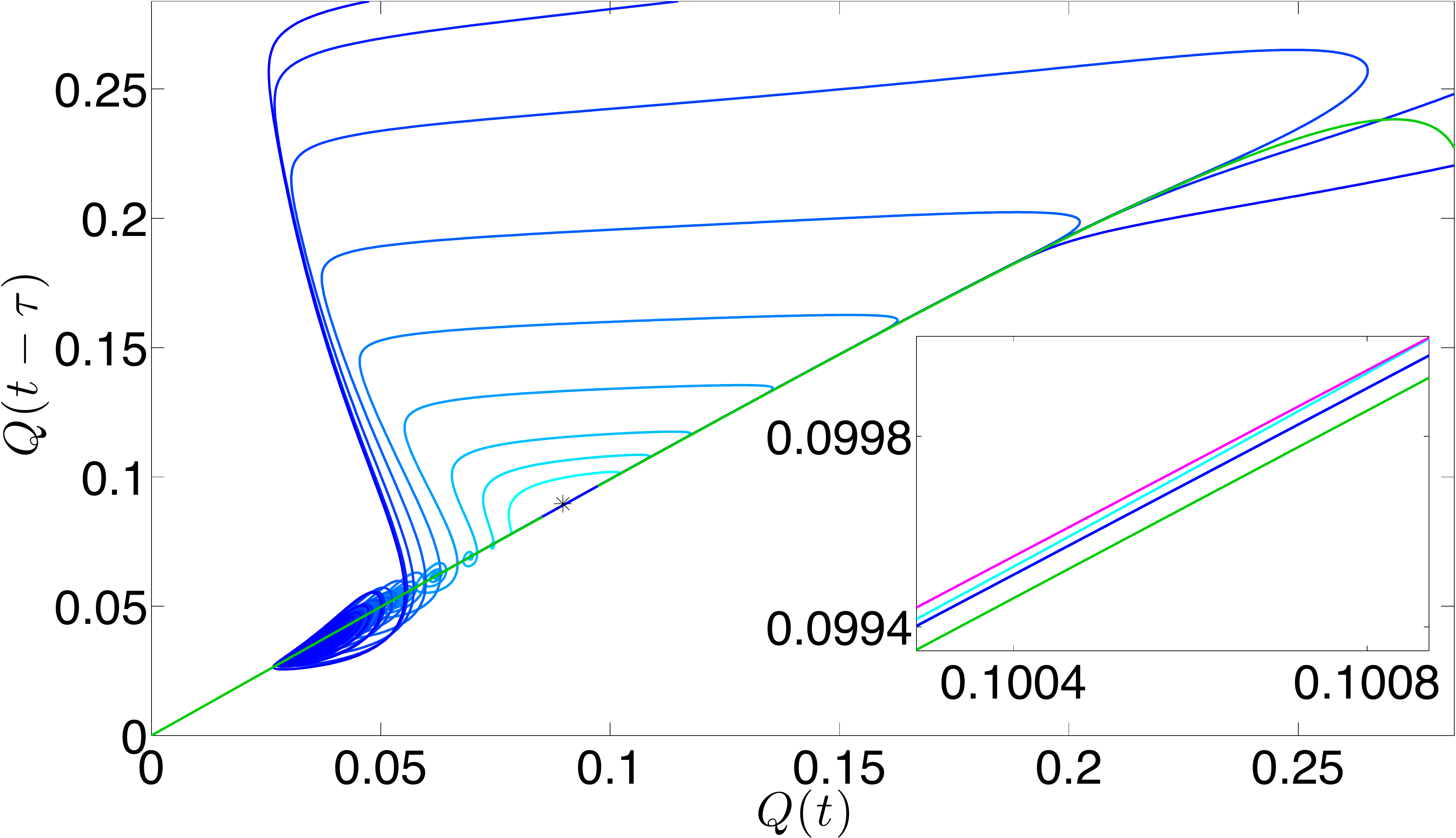}
\put(-245,2.5){\footnotesize{\textit{(ii)}}}
\vspace*{-2mm}
\caption{Approximations $(Q,Q_\tau)$ to the slow manifold of the canard solution where: (i) $Q_\tau$ is given by~\eqref{eq:Qtaucanardslowman} and shown in violet; (ii) $Q_\tau$ is given by~\eqref{eq:Qtauslowlin} with $Q_{r}$ replaced by $Q$ and shown in green. Both approximations are shown in the insets.}
\label{fig:linearslowman}
\end{figure}

To determine the dynamics close to the slow manifold, it is necessary to take proper account of the delayed term, 
and so we will derive another slow manifold approximation.
For this, linearise $h(Q)$
about $Q=Q_r$ for general $Q_r$ as
\begin{equation} \label{hlin}
h(Q)\approx h(Q_r)+h'(Q_r)(Q-Q_r).
\end{equation}
Then let $Q_s(t)$ be a solution on the slow manifold, and let $Q(t)$ be some other solution in a neighbourhood of this manifold, and let $w(t)=Q(t)-Q_s(t)$ be the difference between these two solutions, then using~\eqref{hlin} and~\eqref{dQdt3}
we find that
\begin{equation}\label{eq:linslddenonaut}
w'(t) = Q'(t)-Q_s'(t) \approx -(\kappa+h'(Q_s(t)))w(t)+Ah'(Q_s(t-\tau))w(t-\tau).
\end{equation}
The linearisation~\eqref{eq:linslddenonaut} is  actually valid as an approximation of the dynamics about any solution $Q_s(t)$ of the DDE~\eqref{Qprime}. However, it is problematical to use, even about solutions on the slow manifold,
since although~\eqref{eq:linslddenonaut} is linear it is non-autonomous and the time-dependent terms depend on the as yet unknown slow-manifold solution $Q_s(t)$.
As an alternative, rather than linearise about a particular solution, close to the slow manifold we can use
\eqref{hlin} to linearise $h$ about $Q=Q_r$, for some reference value of $Q$ and convert~\eqref{Qprime} into a
linear DDE for the dynamics near to $Q=Q_r$, with
\begin{equation}\label{DDEhlinderiv}
Q^{\prime}(t) \approx (A-1)[h(Q_r)-h^{\prime}(Q_r)Q_r] -(\kappa+h^{\prime}(Q_r))Q(t)+Ah^{\prime}(Q_r)Q(t-\tau).
\end{equation}
We rewrite~\eqref{DDEhlinderiv} as
\begin{equation} \label{DDEhlinnon}
Q^{\prime}(t)= aQ(t)+bQ(t-\tau)+c,
\end{equation}
where letting
\begin{equation}\label{fQsQ}
G(Q) := (A-1)\beta(Q)Q-\kappa Q=(A-1)h(Q)-\kappa Q,
\end{equation}
we see that the constants $a$, $b$ and $c$ satisfy
\begin{gather*}
a=-(\kappa+h^{\prime}(Q_r)),\qquad
b=Ah^{\prime}(Q_r),\\
c=(A-1)[h(Q_r)-h^{\prime}(Q_r)Q_r]=G(Q_r)+[\kappa-(A-1)h^{\prime}(Q_r)]Q_r=G(Q_r)-(a+b)Q_r.
\end{gather*}
Noting that
$$G'(Q_r)=(A-1)h'(Q_r)-\kappa=a+b,$$
we can rewrite $c$ as
$c=G(Q_r)-G'(Q_r)Q_r.$

The function $G$ is unimodal, and recalling~\eqref{dQdt.star} it follows that
$G(0)=G(Q^*)=0$, and $G$ has a unique maximum for
$Q=Q_f$ for some $Q_f\in(0,Q^*)$.
Thus $G'(Q_r)>0$ (resp. $G'(Q_r)<0$) for $Q<Q_f$ (resp. $Q>Q_f$).
Another value of $Q$ which will be relevant below
is $Q_h$, the value of $Q$ such that $h'(Q_h)=0$.
It follows easily that $Q_h>Q_f$. With $\gamma = 0.2453692$ and the other parameters
from Table~\ref{tab.model.par} we have
$$Q_f=0.042263 \quad < \quad Q_h=(s-1)^{-\frac1s}\theta=\theta \quad < \quad Q^*=0.089686.$$

Solutions to the nonhomogeneous linear DDE~\eqref{DDEhlinnon} consist of a particular solution of~\eqref{DDEhlinnon}
and any linear combination of solutions of the homogeneous linear DDE
\begin{equation} \label{DDEhlinaut}
Q^{\prime}(t)=  aQ(t)+bQ(t-\tau).
\end{equation}

The DDE~\eqref{DDEhlinaut} is of the same form as equation~\eqref{dQdtLin} and has the
same characteristic equation~\eqref{hayes}.
This has infinitely many complex roots, which would lead to oscillatory solutions of~\eqref{DDEhlinaut}. However, from above the slow manifold appears to be monotonic, hence we will seek a monotonic solution of~\eqref{DDEhlinderiv}, for which we require real roots of~\eqref{hayes}.
Since $p'(\lambda)=1+b\tau e^{-\lambda\tau}$, for $Q\leq Q_h$ we have $p'(\lambda)\geq1$ and equation~\eqref{hayes} has a unique real root.
Real roots of~\eqref{hayes} can be found using the Lambert-$W$ function~\cite{Corless1996}.
Rearranging~\eqref{hayes} we see that any root $\lambda$ satisfies $b\tau e^{-a\tau}=(\lambda-a)\tau e^{(\lambda-a)\tau}.$
Hence $W(b\tau e^{-a\tau})=(\lambda-a)\tau$, and so
\begin{equation} \label{lambdaW}
\lambda=a+\frac{1}{\tau}W(b\tau e^{-a\tau}).
\end{equation}

Consider first the case where $Q_r<Q_f$. Then $G'(Q_r)=a+b>0$ and $b>0>a$.
Since $p(0)=-(a+b)$ and $p'(\lambda)\geq1$ the characteristic equation~\eqref{hayes} has a
unique real root $\lambda_+=\lambda_+(Q_r)\in(0,G'(Q_r))$ given by~\eqref{lambdaW}. Equation~\eqref{DDEhlinnon}
then admits a constant solution, $Q(t)=k$, where
$$k=\frac{-c}{a+b}=\frac{-G(Q_r)+G'(Q_r)Q_r}{G'(Q_r)}=Q_r-\frac{G(Q_r)}{G'(Q_r)}.$$
Hence a monotonic solution of~\eqref{DDEhlinnon} passing through $Q(0)=Q_r$ is
\begin{equation} \label{QsollessQf}
Q(t)=Q_r+(e^{\lambda_+ t}-1)G(Q_r)/G'(Q_r).
\end{equation}
The general solution of~\eqref{DDEhlinnon} which describes
the behaviour of solutions in a neighbourhood of the monotonic solution is obtained by adding an arbitrary linear combination of
the solutions of~\eqref{DDEhlinaut}, defined by the roots of~\eqref{hayes}. But since $\lambda_+>0$, even without solving for the complex characteristic values, we already know that the monotonic solution is not stable for $Q<Q_f$.
Nevertheless, we can use~\eqref{QsollessQf} to approximate the slow manifold for $Q<Q_f$.
From~\eqref{QsollessQf}, when $Q=Q_r$ we have
$Q'=\lambda_+G(Q_r)/G'(Q_r)<G(Q_r)$, and hence assuming that $Q(t-\tau)\approx Q(t)-\tau Q'(t)$, we can approximate the slow manifold $Q_s(t)$ in the delay embedding $(Q(t),Q(t-\tau))$ for $Q<Q_f$ by the curve $(Q_r,Q_\tau)$ where
\begin{equation} \label{eq:Qtauslowlin}
Q_\tau= Q_r - \tau\lambda G(Q_r)/G'(Q_r)
\end{equation}
and $\lambda=\lambda_+(Q_r)$.



Next consider the case for which
$Q_r\in(Q_f,Q_h)$. Then $b>0>a+b=G'(Q_r)$.
Now, $p(0)=-(a+b)>0$ and $p'(\lambda)\geq1$, so the characteristic equation~\eqref{hayes} has a unique
real root which is negative, $\lambda_-=\lambda_-(Q_r)\in(G'(Q_r),0)$ given by~\eqref{lambdaW}.
Similarly to above, equation~\eqref{DDEhlinnon} then has a monotonic solution
\begin{equation} \label{QsolgtQf}
Q(t)=Q_r+(e^{\lambda_- t}-1)G(Q_r)/G'(Q_r),
\end{equation}
passing through $Q(0)=Q_r\in(Q_f,Q_h)$. The behaviour of nearby solutions is determined by the general solution of
\eqref{DDEhlinnon} which is
\begin{equation} \label{QgensolmoreQf}
Q(t)=Q_r+(e^{\lambda_- t}-1)\frac{G(Q_r)}{G'(Q_r)}+\beta_0e^{\lambda_-t}
+\sum_{j\geq1}e^{\alpha_jt}(\beta_j\cos(\omega_jt)+\gamma_j\sin(\omega_jt)),
\end{equation}
for constants $\beta_j$, $\gamma_j$ where $\lambda_j=\alpha_j\pm i\omega_j$ are the complex roots
of~\eqref{hayes}.
For this solution, not only is $\lambda_-<0$, but we can also show that all the complex characteristic values that solve~\eqref{hayes} also have strictly negative real part.
Taking real and imaginary parts of~\eqref{hayes} we find that $\lambda_j=\alpha_j\pm i\omega_j$ satisfies
$0=\alpha_j-a-be^{-\alpha_j\tau}\cos(\omega_j\tau)=\omega_j+be^{-\alpha_j\tau}\sin(\omega_j\tau),$
which implies that
$\omega_j^2=b^2e^{-2\alpha_j\tau}-(\alpha_j-a)^2.$
But for $Q_r\in(Q_f,Q_h)$ we have $a<0$ and $-a>b>0$, hence for a characteristic root with $\alpha_j\geq0$ we have
$\omega_j^2\leq b^2-a^2<0$, a contradiction, and so all characteristic values
have $Re(\lambda_j)=\alpha_j<0$.

For $Q\in(Q_f,Q_h)$ we can again approximate the slow manifold $Q_s(t)$ in the delay embedding $(Q(t),Q(t-\tau))$ by the curve $(Q_r,Q_\tau)$ where $Q_\tau$ is defined by~\eqref{eq:Qtauslowlin} with $\lambda=\lambda_-(Q_r)$.
Since $\alpha_j<0$ for all $j$ and $\lambda_-<0$ all the additional solution elements included in
\eqref{QgensolmoreQf} are decaying, and the solution defined by~\eqref{QsolgtQf} and the resulting slow manifold are attracting in this region of phase space.

\begin{table}[t]
\begin{center}
\begin{tabular}{|c|c|c|c|c|}\hline
$Q_r$ &  $\lambda_\pm$ & $Q'$ & $Q_\tau$ & $\lambda_1$ \\ \hline
$0.01$ & $1.10\times10^{-5}$ & $1.17\times10^{-5}$ & $9.96\times10^{-3}$ & $-0.0118 \pm 2.15i$ \\
$0.02$ & $9.56\times10^{-4}$ & $2.45\times10^{-5}$ & $1.99\times10^{-2}$ & $-0.0155 \pm 2.13i$ \\
$0.03$ & $6.68\times10^{-4}$ & $3.96\times10^{-5}$ & $2.98\times10^{-2}$ & $-0.0237 \pm 2.11i$ \\
$0.04$ & $1.60\times10^{-4}$ & $5.81\times10^{-5}$ & $3.98\times10^{-2}$ & $-0.0408 \pm 2.07i$ \\
$0.05$ & $-7.40\times10^{-4}$ & $8.13\times10^{-5}$ & $4.97\times10^{-2}$ & $-0.077 \pm 2.00i$ \\
$0.06$ & $-2.45\times10^{-3}$ &  $1.10\times10^{-4}$ & $5.96\times10^{-2}$ & $-0.157 \pm 1.90i$ \\
$0.07$ & $-6.28\times10^{-3}$ &  $1.47\times10^{-4}$ & $6.95\times10^{-2}$ & $-0.354 \pm 1.75i$ \\
$0.08$ & $-1.93\times10^{-2}$ &  $1.98\times10^{-4}$ & $7.94\times10^{-2}$ & $-1.35 \pm 1.42i$ \\ \hline
\end{tabular}
\end{center}
\vspace*{2mm}
\caption{Table showing the real characteristic value $\lambda_-$ or $\lambda_+$ of~\eqref{hayes} along with the first pair of complex characteristic values $\lambda_1$, and the derivative of the solution $Q'(t)$ given by~\eqref{QsollessQf} or~\eqref{QsolgtQf}, and the resulting approximation of $Q_\tau$ given by~\eqref{eq:Qtauslowlin} for a range of values of $Q_r$.}
\label{tab:lambdas}
\end{table}
%
The convergence onto the slow manifold is oscillatory, as seen in Figure~\ref{fig:GammaCont_Canard}. This is governed by the dominant complex characteristic value of~\eqref{hayes}, the value of which
is stated as $\lambda_1$ in Table~\ref{tab:lambdas}. We see from the table that $Re(\lambda_1)$ becomes more negative as $Q$ increases, implying that the slow manifold becomes more attractive as $Q$ increases towards $Q_h$. This is clearly visible in
Figure~\ref{fig:GammaCont_Canard}(iii) with progressively fewer  oscillations visible for the orbits converging onto the
slow manifold for larger values of $Q$. The period of these oscillations $2\pi/\omega_1$ also increases with $Q$ but not greatly, and is close to $3$ in the range of $Q$ values where the oscillations are most visible.

Figure~\ref{fig:canard_lindyn} illustrates how well our approximations perform in the region $Q\in(Q_f,Q_h)$
where the slow manifold is attracting. The blue curves in Figure~\ref{fig:canard_lindyn} show part of the limit cycle
of the nonlinear DDE~\eqref{Qprime}
with period $297$ days when $\gamma= 0.2453692$, which occurs in the canard explosion and was previously shown in Figure~\ref{fig:GammaCont_Canard}. Taking $Q_r=0.063224$ we find that
the rightmost characteristic value is $\lambda_{-}= -3.33\times10^{-3}$, then~\eqref{QsolgtQf} defines an approximation to the slow manifold which is shown as the black curve in Figure~\ref{fig:canard_lindyn}.

\begin{figure}[t]
\includegraphics[width=0.49\textwidth,height=44mm]{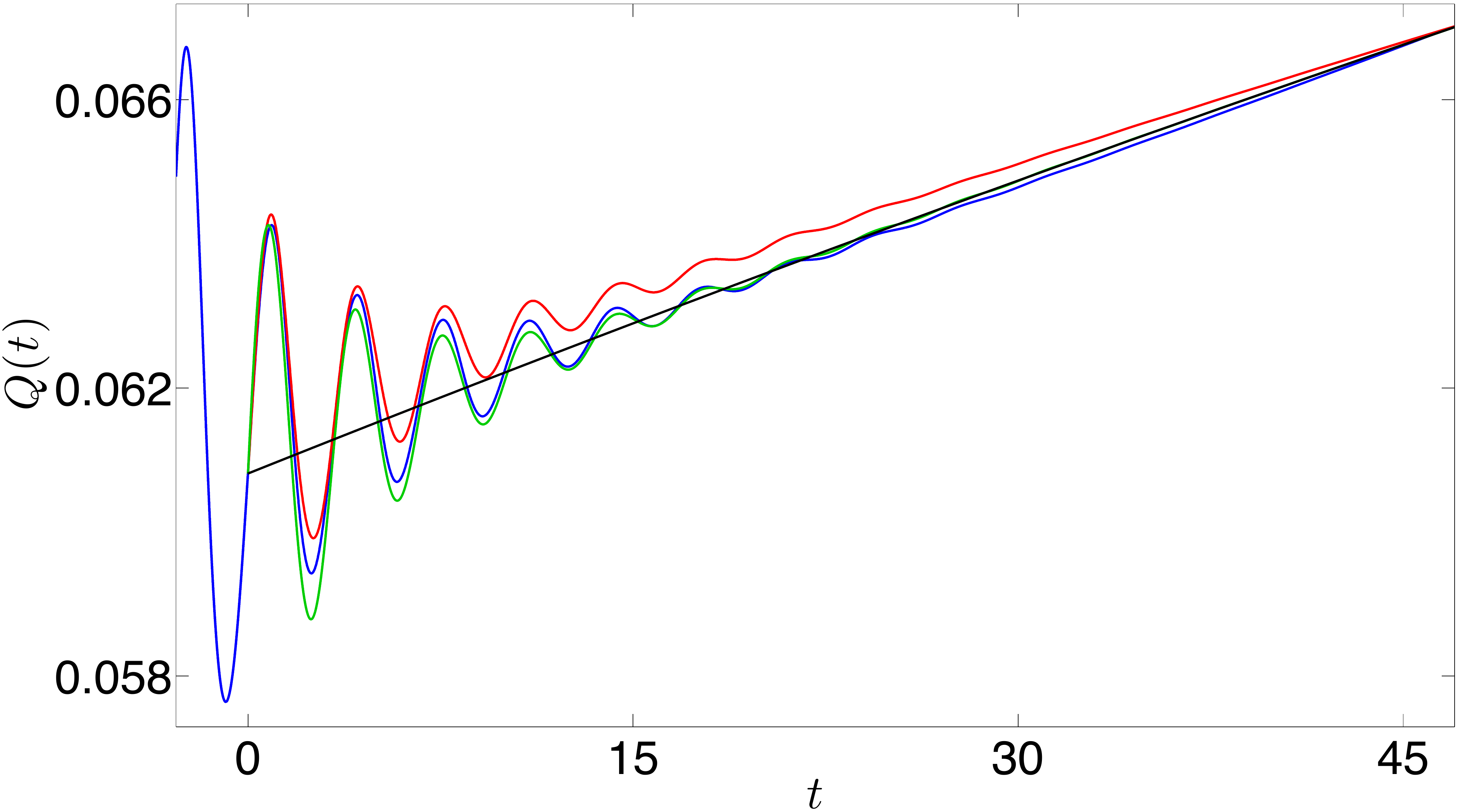}
\put(-245,2.5){\footnotesize{\textit{(i)}}}
\hspace*{1mm}
\includegraphics[width=0.49\textwidth,height=44mm]{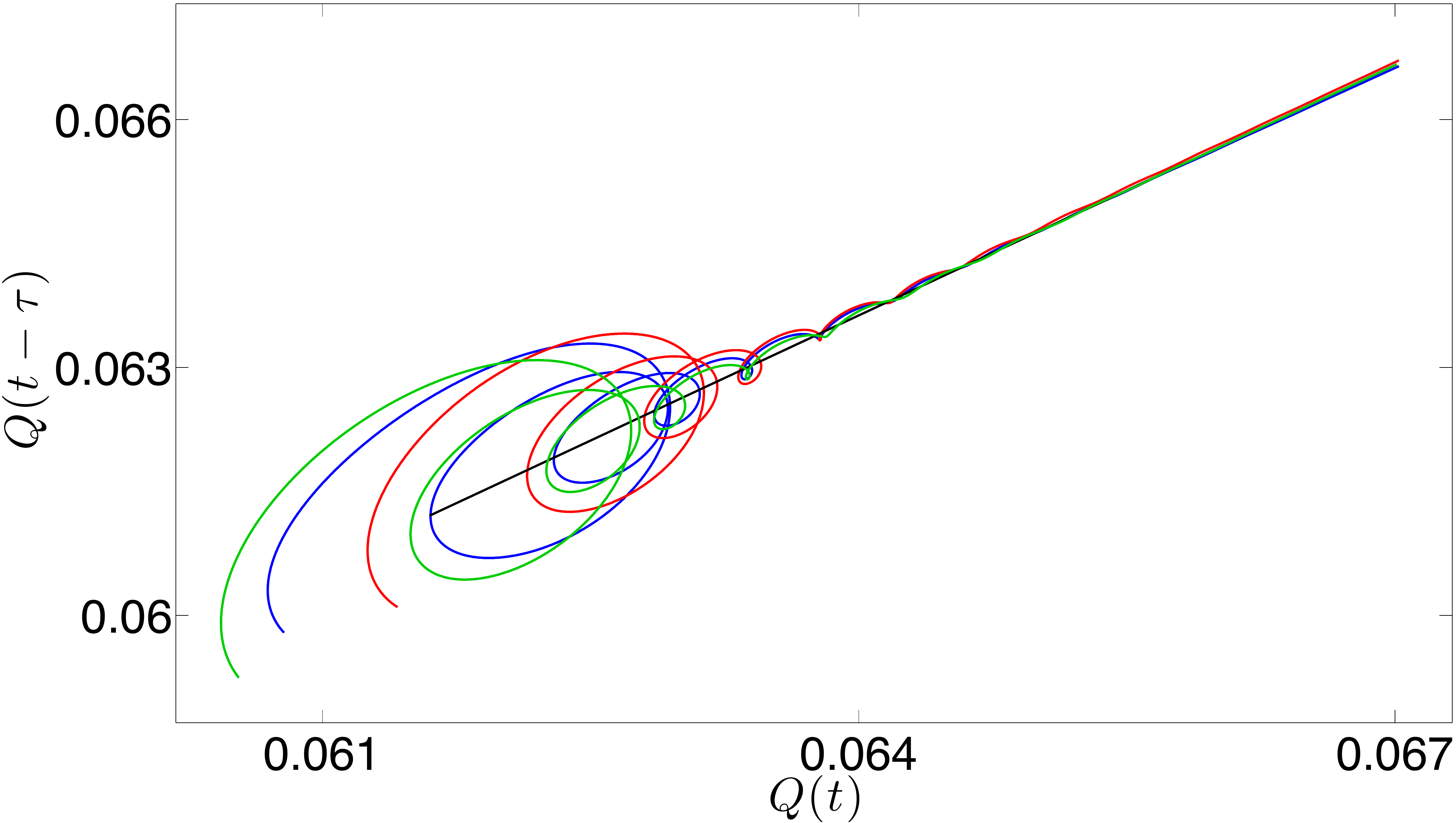}
\put(-245,2.5){\footnotesize{\textit{(ii)}}}
\vspace*{-2mm}
\caption{(i) Profile and (ii) time-delay embedding, for a periodic solution of the nonlinear DDE~\eqref{Qprime}, and several of its approximations.
For $\gamma = 0.2453692$ the DDE~\eqref{Qprime} has
a limit cycle with period $297$ days, part of which is shown here (blue curve). For the approximations we take
$Q_{r}=0.063224$, then the black curve shows the approximation~\eqref{QsolgtQf} to the slow manifold. The red curve shows a solution to~\eqref{DDEhlinnon} computed numerically using
a segment of the solution of the nonlinear DDE~\eqref{Qprime} for $t\in[-2.8,0]$ to define the initial function. The green curve shows a solution of~\eqref{DDEhlinnon} defined by~\eqref{QgensolmoreQf} with
$\beta_j=\gamma_j=0$ for all $j$, except $\gamma_1=0.003935$.}
\label{fig:canard_lindyn}
\end{figure}

The second-rightmost characteristic value $\lambda_{1} = \alpha_1+i\omega_1=-0.202 + 1.86i$ yields the approximate oscillation time of 3.37 days. To show that this characteristic value governs the convergence of solutions onto the slow manifold in
Figure~\ref{fig:canard_lindyn} we show as the green curve the solution~\eqref{QgensolmoreQf} of the linearised DDE~\eqref{DDEhlinnon} with $\beta_j=\gamma_j=0$ for all $j$, except for
$\gamma_1\ne0$, so that the only oscillatory mode included in the solution is defined by $\lambda_{1}$. Additionally, the red curve shows the solution of~\eqref{DDEhlinnon} incorporating all modes, computed by solving
\eqref{DDEhlinnon} numerically using part of the solution of~\eqref{Qprime} as the initial function. Both approximations have oscillations about the slow manifold with very similar period and decay rate as for the solution of the full nonlinear DDE~\eqref{Qprime}, demonstrating the validity of our approximations.


In the current work we will not describe the passage of the slow manifold past the steady state $Q^*$, but note that the behaviour of the solutions of~\eqref{hayes} changes when $Q_r$ approaches $Q^*$.
For $Q>Q_h$ we have $b<0$ and the different branches of $W(x)$ in~\eqref{lambdaW} for $x<0$ can lead to zero or two real solutions for $\lambda$. There are two values of $Q$, $Q_{h'}^-<Q^*<Q_{h'}^+$,
such that when $Q=Q_{h'}^\pm$, we have $b\tau e^{-a\tau}=-e^{-1}$ and the two branches of the Lambert-$W$ function coalesce.
These points can be computed from the solution on each branch of $h'(Q_{h'}^\pm)=W(-e^{-1-\kappa\tau}/A)/\tau$,
which leads to $Q_{h'}^-=0.08626$ and $Q_{h'}^+=0.09389$.
For $Q\in(Q_{h'}^-,Q_{h'}^+)$ equation~\eqref{hayes} has no real roots. At the boundaries, $Q_{h'}^\pm$, of this interval a pair of complex conjugate characteristic roots coalesce, and for
$Q<Q_{h'}^-$ or $Q>Q_{h'}^+$ there are two real  characteristic roots.
At the steady state $Q^*\in(Q_{h'}^-,Q_{h'}^+)$, there is a single pair of characteristic roots with positive real part and leading characteristic roots of \eqref{hayes} are $\lambda_1=0.0070 \pm 0.1303i$ and $\lambda_2=-0.73\pm2.67i$.



For $Q>Q_{h'}^+$, the function $p(\lambda)$ in~\eqref{hayes} is convex with $p(0)=-(a+b)>0$, and $p(\lambda)>0$ for $\lambda\geq a$. With the other parameters as stated,~\eqref{hayes}  has two positive solutions provided
$Q_r\leq 0.28577$. Using the smaller of these two roots, we obtain a monotonic solution of the same form
as~\eqref{QsollessQf} and~\eqref{QsolgtQf}, which can be similarly used to construct an approximation to the slow manifold for $Q>Q_{h'}^+$, as shown in Figure~\ref{fig:linearslowman}(ii).
Because of the two positive characteristic roots, this part of the slow manifold is unstable, as seen in the dynamics where the
periodic orbits of different amplitudes and periods are seen in Figure~\ref{fig:GammaCont_Canard}(ii) to peel away from each other sooner or later depending on their amplitude and period. Thus we have approximated the attracting and repelling parts of the slow manifold either side of the steady state $Q^*$. A complete analysis of the canard explosion would require the dynamics that join these segments of the slow manifold, both near to the steady state, and also the fast dynamics when the solution is far from the slow manifold.

\section{Non-periodic and Chaotic Dynamics}
\label{sec.chaos.hsc}

\sloppy{The period-doubling and torus (Neimark-Sacker) bifurcations that we found in Section~\ref{sec.bifurc.hsc}
suggest that the DDE~\eqref{Qprime} can generate quasi-periodic and chaotic dynamics. The software package
DDEBiftool cannot be used to directly find such dynamics, but we can use the DDEBiftool bifurcation studies of
Section~\ref{sec.bifurc.hsc} to determine parameter regions where non-periodic dynamics should arise.
Long time numerical simulations of the DDE, can then be performed
using the MATLAB \texttt{dde23} routine~\cite{Matlab} with suitable initial history functions
to study the dynamics in these parameter regimes.}

\subsection{Quasi-Periodic Dynamics}
\label{sec.torus}

It is somewhat surprising to find torus bifurcations for the HSC model~\eqref{Qprime}, because as we already noted in Section~\ref{sec.decoup.hsc}, its linearisation about a steady state is equation~\eqref{dQdtLin}
which does not admit any double Hopf bifurcations. Double Hopf or Hopf-Hopf bifurcations are a standard mechanism for generating tori and curves of torus bifurcations~\cite{Kuznetsov_2004}, and arise frequently in systems with coupled oscillators and systems with multiple delays~\cite{Calleja_2017}. However, the HSC model~\eqref{Qprime} is scalar, with a single delay, but nevertheless torus bifurcations do occur, as seen in Section~\ref{sec.2p}. So here we will investigate the existence of invariant tori for~\eqref{Qprime}.

\begin{figure}[t]
\includegraphics[width=0.49\textwidth,height=44mm]{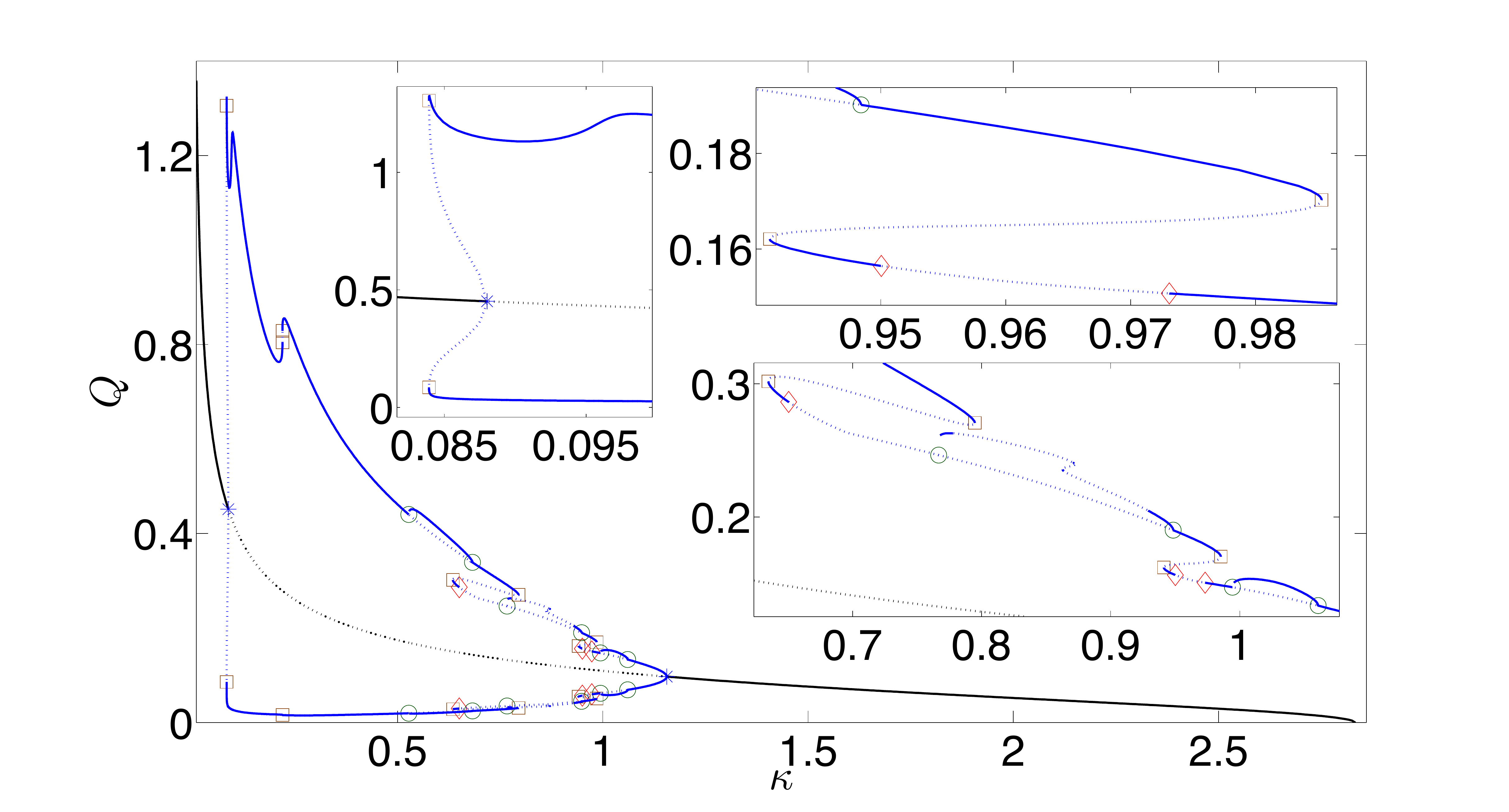}
\put(-245,2.5){\footnotesize{\textit{(i)}}}
\hspace*{1mm}
\includegraphics[width=0.49\textwidth,height=44mm]{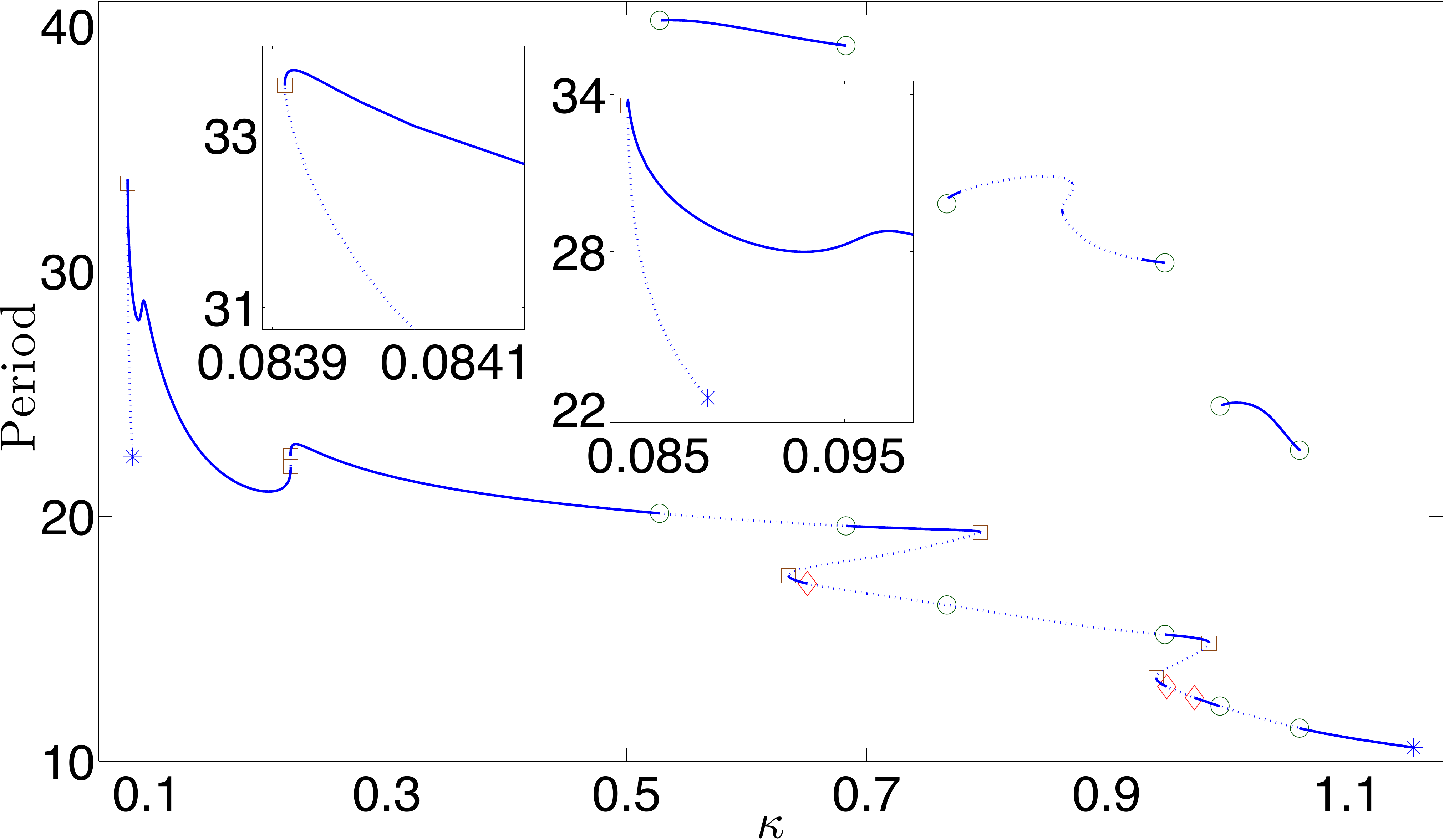}
\put(-247,2){\footnotesize{\textit{(ii)}}}
\vspace*{-2mm}
\caption{(i) Bifurcation diagram for one-parameter continuation in $\kappa$ with $\tau=3.9$ and other parameters taking homeostasis values from Table~\ref{tab.model.par}.
Hopf bifurcation points \textlarger[2]{$\textcolor{blue}{*}$}, saddle-node bifurcation of limit cycles points \textlarger[0]{$\textcolor{brown1}{\Box}$}, period-doubling bifurcation points \textlarger[-1]{$\textcolor{green1}{\Circle}$},
and torus bifurcation points \textlarger[2]{$\textcolor{red}{\diamond}$}
are indicated and highlighted in insets.
(ii) Period of the limit cycles displayed in (i).}
\label{fig2_HSC_Biftool1D_KappaDA}
\end{figure}

The two-parameter continuation in $(\kappa,\tau)$ shown in
Figure~\ref{fig:KappaTau_2D_Cont} reveals an isola of torus bifurcations
for $\kappa\in(0.91859,1.0174)$ and $\tau\in(3.4857,4.1342)$.
In a one parameter continuation, as $\kappa$ is varied with $\tau=3.9$ fixed,
Figure~\ref{fig2_HSC_Biftool1D_KappaDA} reveals that the main branch of periodic
orbits loses stability for $\kappa\in(0.95004,0.97309)$
at a pair of torus or Neimark-Sacker bifurcations, corresponding to the
points where the continuation crosses the isola found in the two-parameter continuation.

The simplest explanation is that there should be a stable torus at $\kappa$ values between the two Neimark-Sacker bifurcations. Although DDEBiftool~\cite{DDEBiftool15} cannot be used to find tori directly, a stable torus can be found by direct numerical simulation
if a suitable initial function is chosen in the basin of attraction of the torus.
Some care needs to be taken, because as the top right inset in
Figure~\ref{fig2_HSC_Biftool1D_KappaDA}(i) reveals, folds on the main branch of periodic solutions result in a stable (as well as two unstable)
periodic orbits existing for $\kappa\in(0.088007,1.1556)$, so if a stable torus exists it will co-exist with a stable periodic orbit.

\begin{figure}[t]
\includegraphics[width=0.49\textwidth,height=44mm]{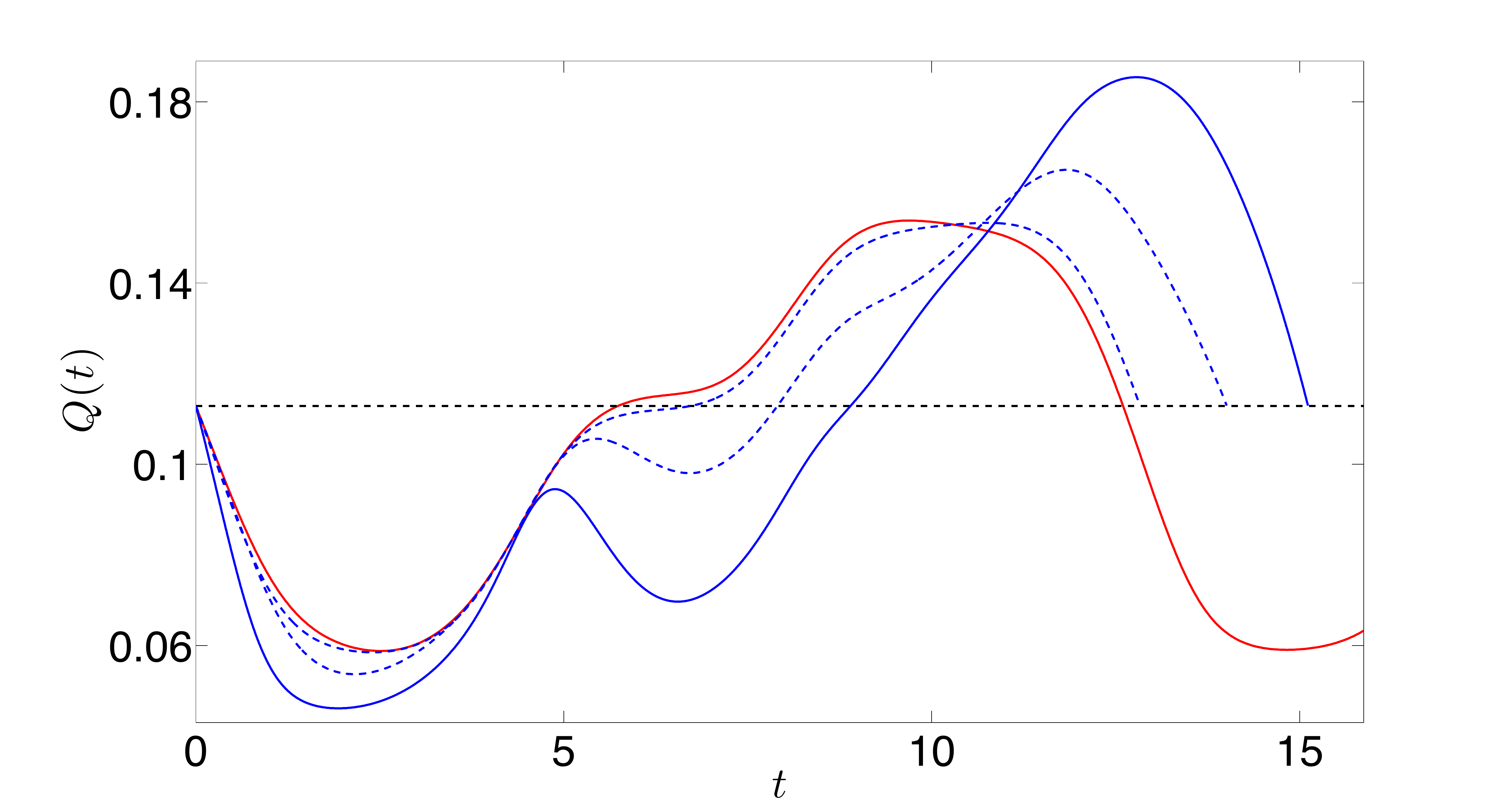}
\put(-245,2.5){\footnotesize{\textit{(i)}}}
\hspace*{1mm}
\includegraphics[width=0.49\textwidth,height=44mm]{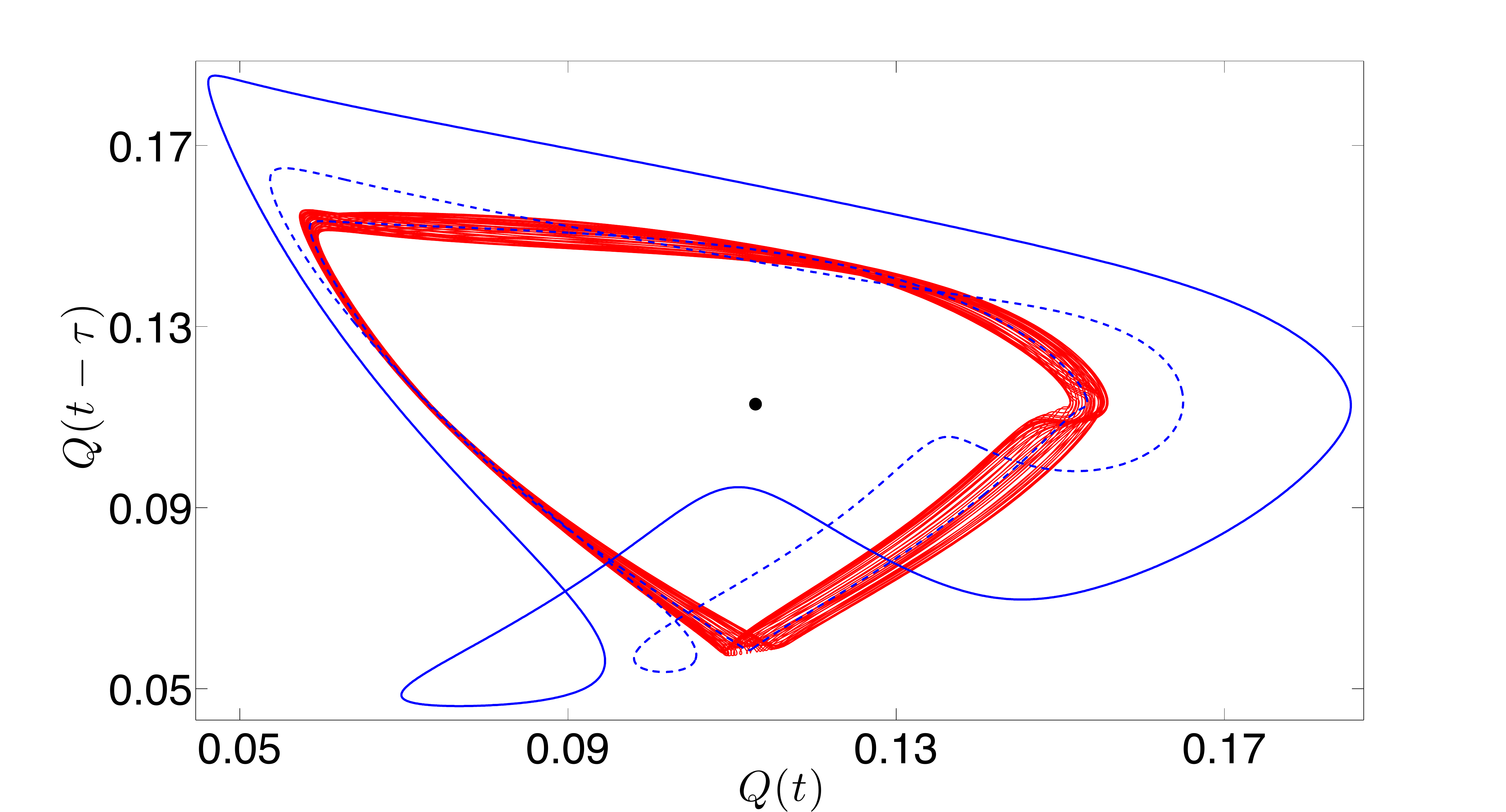}
\put(-245,2.5){\footnotesize{\textit{(ii)}}}

\vspace*{2mm}

\includegraphics[width=0.49\textwidth,height=44mm]{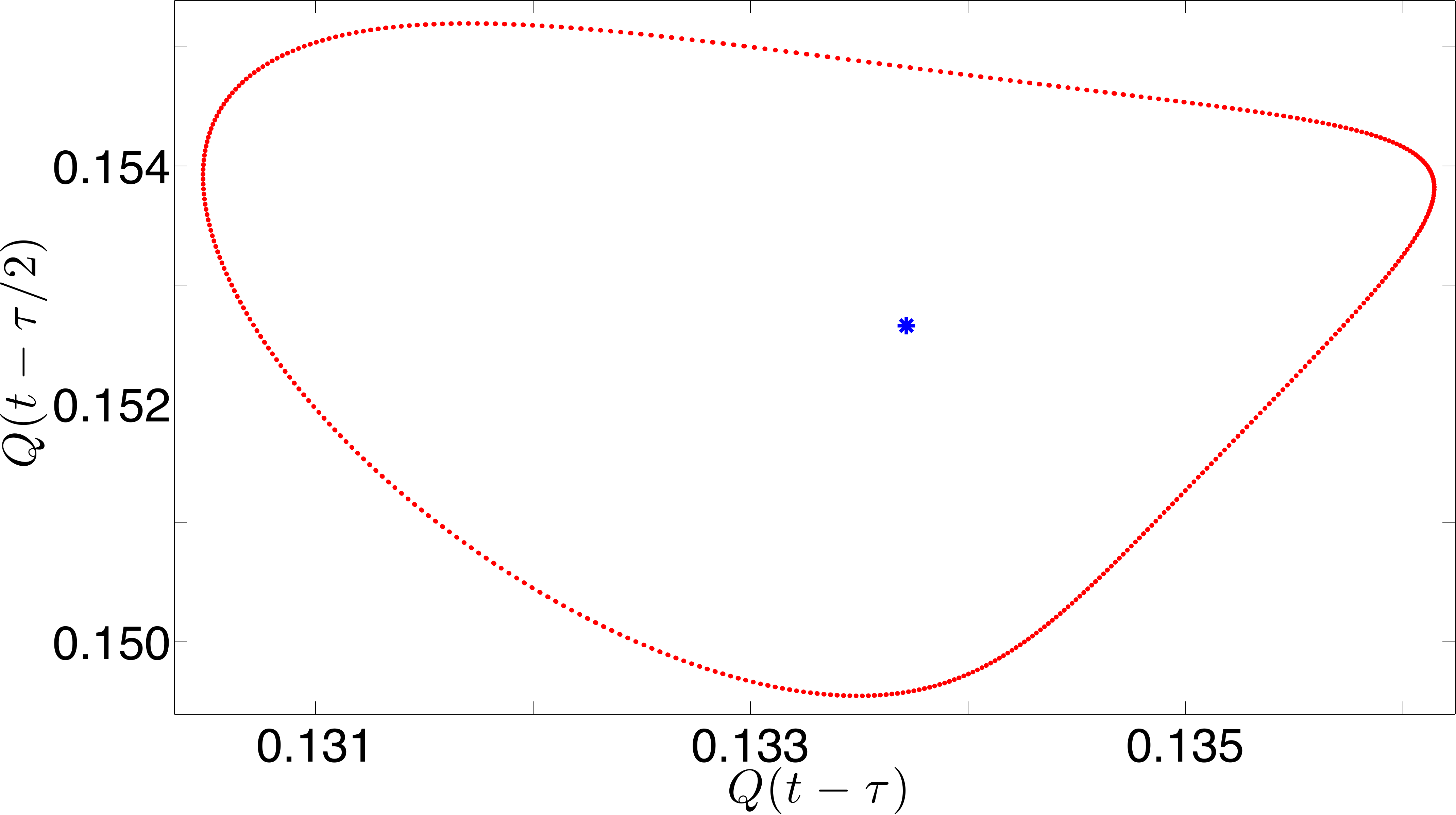}
\put(-245,2.5){\footnotesize{\textit{(iii)}}}
\hspace*{1mm}
\includegraphics[width=0.49\textwidth,height=44mm]{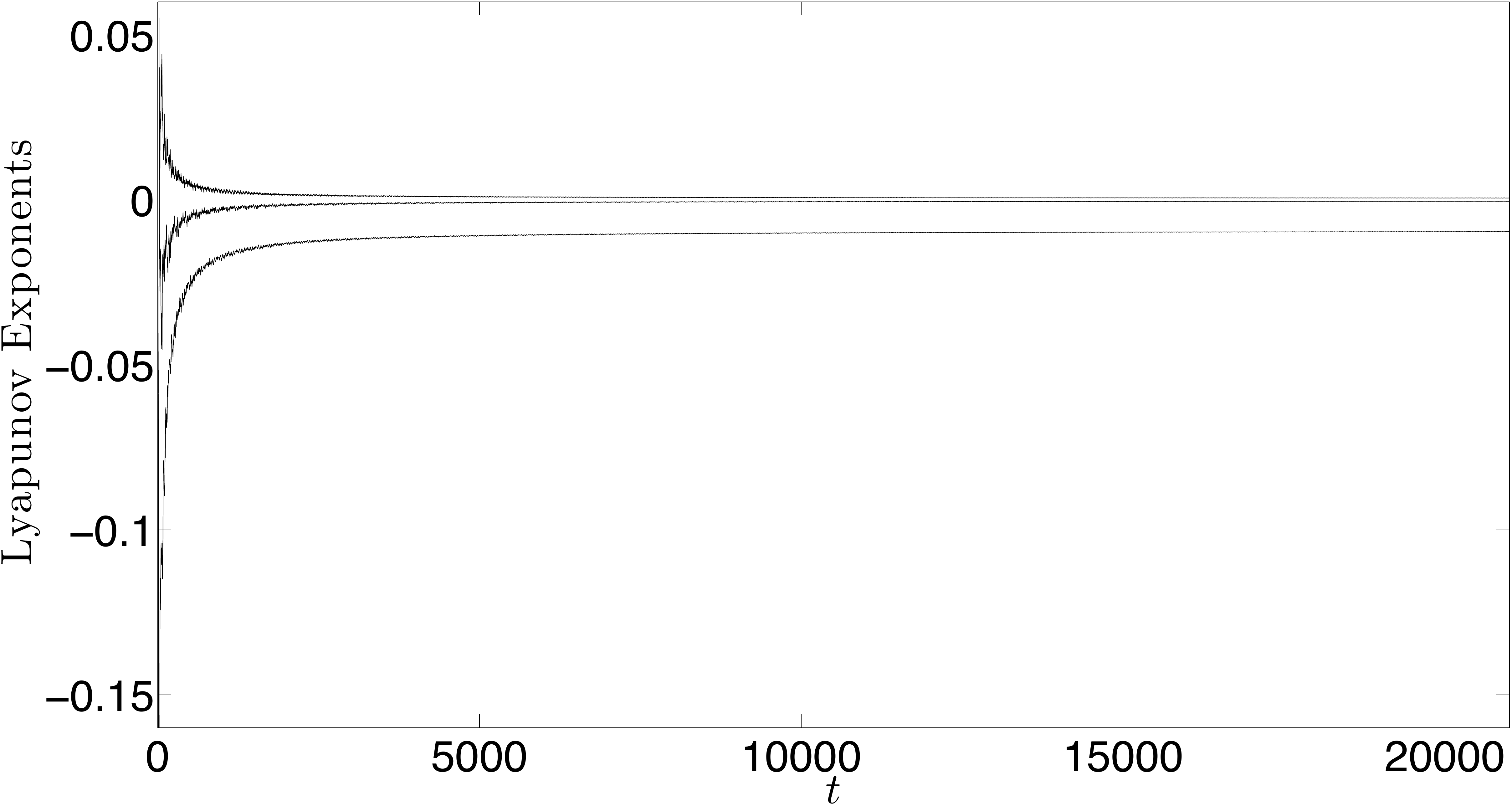}
\put(-245,2.5){\footnotesize{\textit{(iv)}}}
\vspace*{-2mm}
\caption{For $\tau=3.9$ and $\kappa=0.961$. (i) Time series and (ii) projected solution space $(Q(t),Q(t-\tau))$ for
a quasi-periodic orbit (red curve) and the
three periodic solutions (blue curves) seen in the top right inset of Figure~\ref{fig2_HSC_Biftool1D_KappaDA}(i)
for $\kappa=0.961$.
(iii) Projected Poincar\'{e} section of the quasi-periodic orbit (red) and the unstable orbit (blue) that it envelops
onto the plane $(Q(t-\tau),Q(t-\tau/2))$ for crossing of the Poincar\'{e} section $Q(t)=c=0.14$ with $Q'(t)>0$.
(iv) Initial convergence of the first three Lyapunov exponents.
}
\label{fig1_DdeStemKappaQTori01}
\end{figure}

To confirm the existence of a stable torus we performed a long time integration of the DDE
using the MATLAB \texttt{dde23} routine~\cite{Matlab}
with initial history function very close to the
unstable periodic orbit on the main branch of solutions.
For $\kappa=0.961$ with $\tau=3.9$ and all other parameters taking their values from Table~\ref{tab.model.par}
(this parameter combination is indicated by the black square in the inset within Figure~\ref{fig:KappaTau_2D_Cont})
we found a quasi-periodic torus which envelopes the unstable periodic orbit,
as illustrated in Figure~\ref{fig1_DdeStemKappaQTori01}.
The existence of the quasi periodic torus was confirmed numerically both by plotting the Poincar\'e section and by computing the Lyapunov exponents.

Recall from Section~\ref{sec.decoup.hsc} that
the DDE~\eqref{Qprime} has the infinite dimensional phase space $C=C([-\tau,0],\mathbb{R})$, consequently a hyperplane defined by a Poincar\'e section is also infinite dimensional. For $\alpha\in[0,\tau]$ and some constant $c\in\mathbb{R}$ we define the Poincar\'e section
$\cP_\alpha:=\{u_t\in C: u_t(-\alpha)=c, \; u_t'(-\alpha)>0\}$. For $\alpha=0$ this is equivalent to looking for the points $\hat t$ along the solution trajectory such that
$u(\hat t)=c$ and $u'(\hat t)>0$ and taking as the
corresponding element of the Poincar\'e section the function segment $u(t)$ for $t\in[\hat t-\tau,\hat t]$, so that $u$ is equal to $c$ at the right-hand end of the function segment. Other choices of $\alpha$ are also possible, so for example with $\alpha=\tau$ the function $u$ will be equal to $c$ at the left-hand end of the interval.

For the Poincar\'e section $\cP_0$ to be useful we need to project it into finite dimensions. The simplest way to do this is to take the value of the solution $u(t)$ at a finite set of points in $[t-\tau,t]$. Since the choice $\alpha=0$ fixes $u(t)=c$, we choose the time points $t-\tau/2$ and $t-\tau$ and project the Poincar\'e section into $\mathbb{R}^2$ by
plotting $u(t-\tau/2)$ against $u(t-\tau)$ for values of $t$ such that $u(t)=c$.
This is equivalent to the projection
$P:\cP_0\to\mathbb{R}^2$ defined by $P(u_t)=(u(t-\tau),u(t-\tau/2))$.
Figure~\ref{fig1_DdeStemKappaQTori01}(iii) reveals the results of doing this with $c=0.14$
for both the putative torus and the unstable periodic orbit that gave rise to it. This reveals the expected torus structure with the points representing the function segments in $\cP_0$ lying on a closed curve that encloses the point representing the periodic orbit in the two-dimensional projection. Since each of the red points represents separate intersections of the same orbit with the Poincar\'e section, the orbit is either quasi-periodic or of period longer than a human adult lifespan (the time integration was $30000$ days, which is longer than $82$ years).

We computed the Lyapunov exponents of the quasi-periodic orbit on the torus using the method of Breda and Van Vleck~\cite{Breda_2014}. 
Figure~\ref{fig1_DdeStemKappaQTori01}(iv) shows
the initial convergence of the numerical estimates for the three largest Lyapunov exponents.
After $3\times10^4$ days, the six largest Lyapunov exponents are estimated to be
$0.00052$, $-0.00066$, $-0.0093$, $-0.18$, $-0.29$ and $-0.29$. This reveals that up to the numerical accuracy the first two exponents are both zero, and the rest are negative, as is characteristic for a quasi-periodic two-torus.

\begin{figure}[t]
\includegraphics[width=0.49\textwidth,height=44mm]{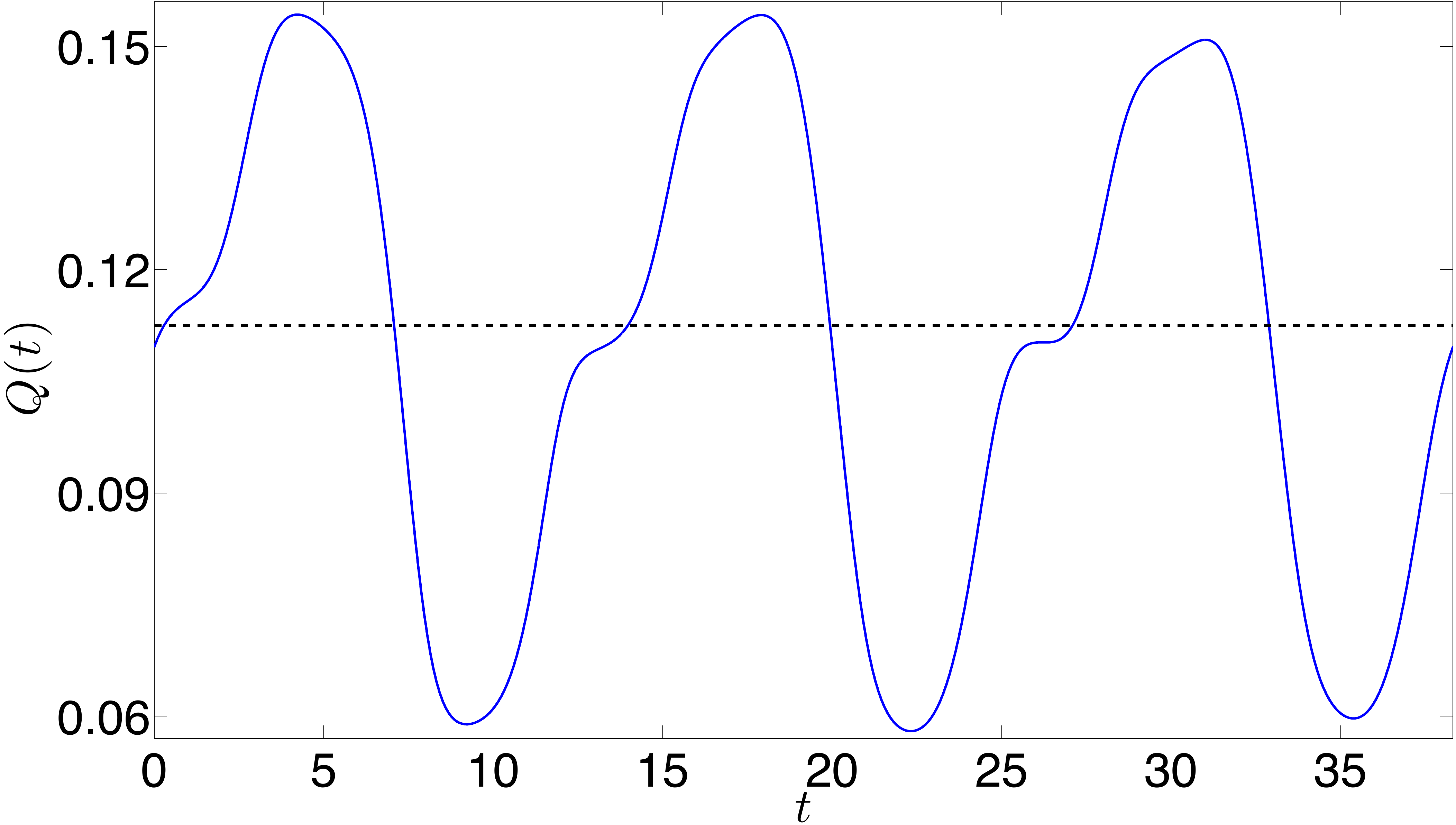}
\put(-245,2.5){\footnotesize{\textit{(i)}}}
\hspace*{1mm}
\includegraphics[width=0.49\textwidth,height=44mm]{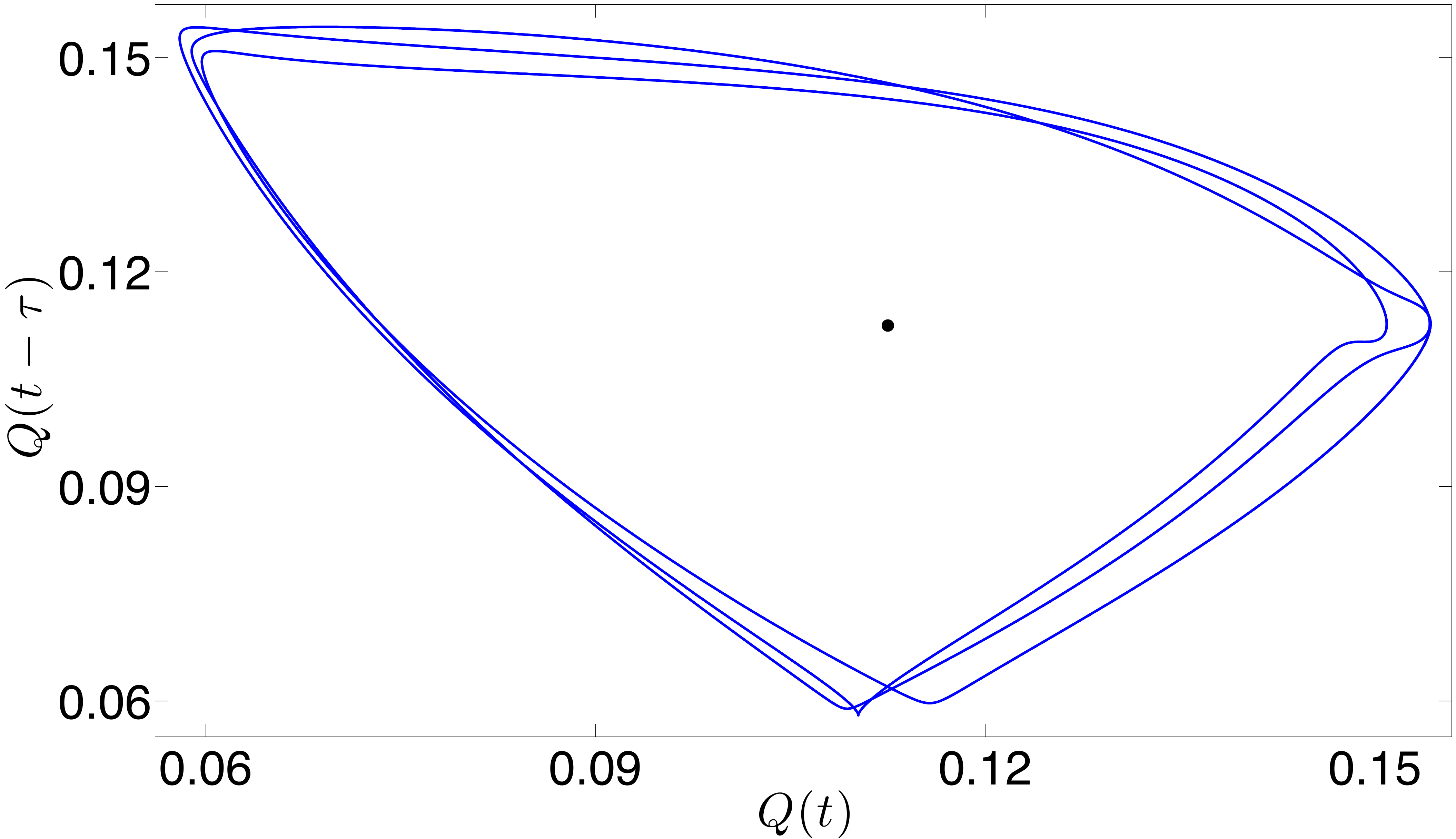}
\put(-245,2.5){\footnotesize{\textit{(ii)}}}

\vspace*{2mm}

\includegraphics[width=0.49\textwidth,height=44mm]{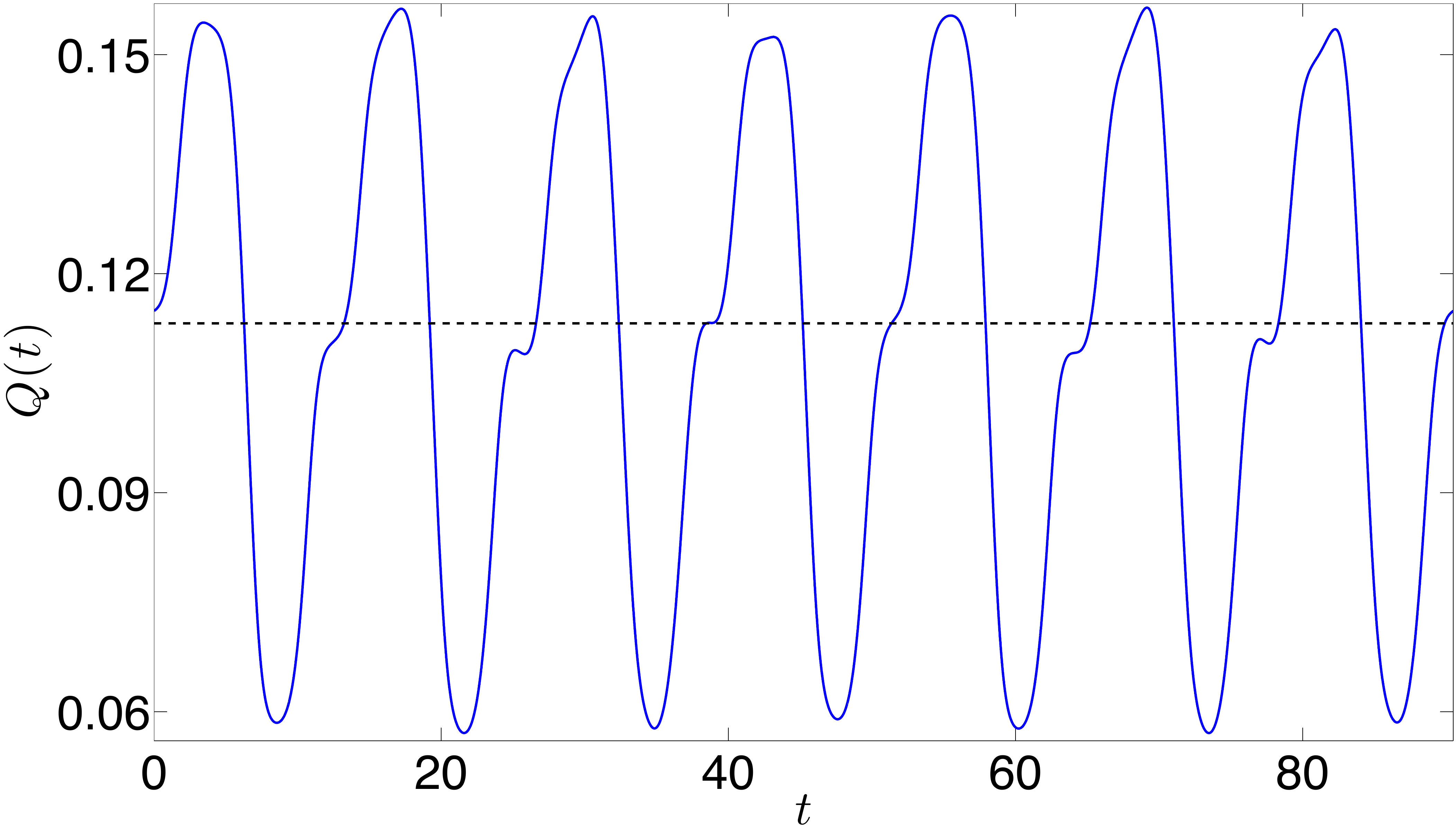}
\put(-245,2.5){\footnotesize{\textit{(iii)}}}
\hspace*{1mm}
\includegraphics[width=0.49\textwidth,height=44mm]{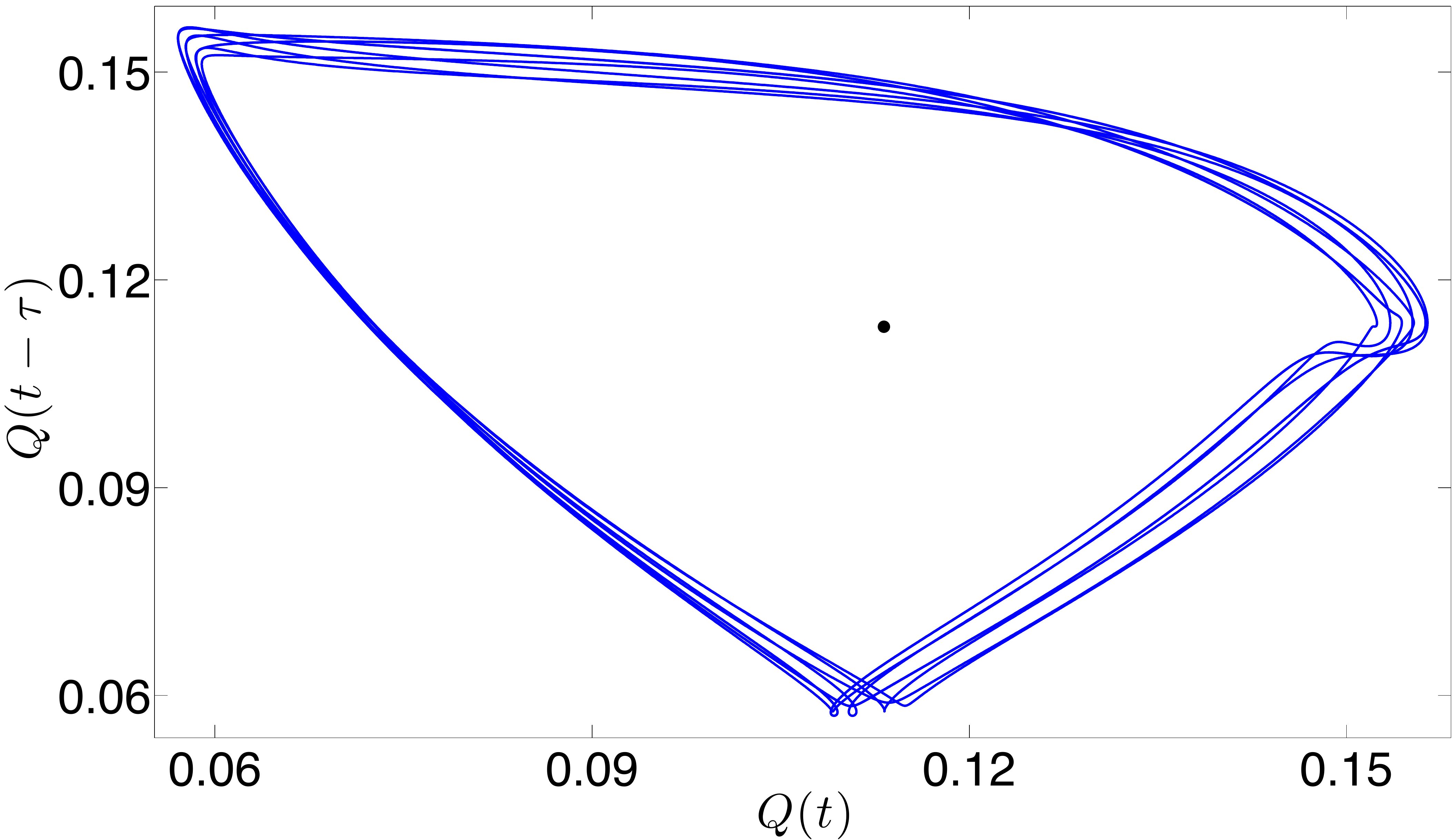}
\put(-245,2.5){\footnotesize{\textit{(iv)}}}
\vspace*{-2mm}
\caption{Time series over one period and the corresponding time-delay embedding $(Q(t),Q(t-\tau))$
for stable periodic orbits on the phase-locked torus with
$\tau=3.9$ for (i-ii) $\kappa=0.965$, and, (iii-iv) $\kappa=0.957$, which intersect the
Poincar\'e section $\cP_0$ three and seven times respectively.}
\label{fig_phase_locking}
\end{figure}

As is well known in torus dynamics~\cite{Broer_1996}, perturbing parameters in the system will change the dynamics on the torus, with parameter regions of phase locking, where there is a stable periodic orbit on the torus, interspersed with parameter sets for which the dynamics are truly quasi-periodic. So, although we cannot prove that there exists a quasi-periodic torus for
exactly the parameters illustrated in Figure~\ref{fig1_DdeStemKappaQTori01}, there will be for nearby parameter values. Equally, there will be parameter sets for which phase locking occurs on the torus, leading to stable periodic orbits of large period.
In Figure~\ref{fig_phase_locking} we show examples with $\kappa=0.957$ and $\kappa=0.965$ and all the other parameters at their values for the example of Figure~\ref{fig1_DdeStemKappaQTori01}
(in the $(\kappa,\tau)$ space of Figure~\ref{fig:KappaTau_2D_Cont} both these parameter sets are inside the torus curve close to the black square). These
show stable periodic orbits which close after going around the torus $7$ and $3$ times, leading to periodic orbits of periods approximately $90.5$ and $38.3$ days. These orbits intersect the Poincar\'e section $\cP_0$ and its projection into $\mathbb{R}^2$ seven and three times respectively.

The phase locked orbits exist over a parameter region called an Arnold tongue. DDEBiftool can be used to find the edges of these Arnold tongues (which are bounded by a fold bifurcation of periodic orbits between the interleaved stable and unstable orbits that lie on the torus inside the parameter region of the Arnold tongue). These Arnold tongues will lie in the small parameter region indicated in the inset of Figure~\ref{fig:KappaTau_2D_Cont} where torus bifurcations occur. We will not pursue the Arnold tongue structure in this work; Arnold tongues have previously been computed for DDEs, even in the state-dependent delay case~\cite{Calleja_2017}.

Another curve of torus bifurcations is visible in Figure~\ref{fig:KappaTau_2D_Cont} close to $(\kappa,\tau)=(0.7,4)$. The corresponding torus bifurcation can be seen in Figure~\ref{fig2_HSC_Biftool1D_KappaDA} at $\kappa\approx0.65046$
(with $\tau=3.9$), where the periodic orbit loses stability in a torus bifurcation. In this case there is not a second corresponding torus bifurcation where the periodic orbit regains stability. Instead there is a period-doubling bifurcation (which is a resonant torus bifurcation) near $\kappa=0.767$, but the periodic orbit on the principal branch does not regain stability at this point. That this period-doubling bifurcation is associated with the neighbouring torus bifurcation can be surmised from Figure~\ref{fig:KappaTau_2D_Cont} where we see that the endpoints of the curve of torus bifurcations lie on the period-doubling bifurcation curve. The torus dynamics are likely to be more complicated in this case, but we did not explore them.


\subsection{Chaotic Dynamics}
\label{sec.chaos}

Having found long period and quasi-periodic orbits, it is natural to also ask whether~\eqref{Qprime} admits chaotic solutions.
Kaplan and Yorke~\cite{KaplanYorke1979}
defined an attractor dimension, now known as the Lyapunov dimension, to be
\begin{equation} \label{lyapdim}
d=k-\frac{1}{\lambda_{k+1}}\sum_{j=1}^k\lambda_j
\end{equation}
where the Lyapunov exponents are ordered so $\lambda_1\geq \lambda_2\geq\ldots$, and $k$ is the largest integer so that the sum of the first $k$ exponents is non-negative, thus necessarily $\lambda_{k+1}<0$, and $d\in[k,k+1)$. For the torus seen in the previous section with $\lambda_1=\lambda_2=0>\lambda_3$ equation~\eqref{lyapdim} gives a dimension of $d=2$, as expected for a torus.

One generally accepted indication of chaos is the presence of a positive Lyapunov exponent, in which case the Lyapunov dimension will be larger than two.
We will investigate the existence of chaotic solutions for~\eqref{Qprime} by
numerically computing the Lyapunov exponents, again using the method
of Breda and Van Vleck~\cite{Breda_2014}.
\begin{figure}[t]
\hspace*{-2mm}
\begin{minipage}{0.728\columnwidth}
\includegraphics[width=\columnwidth]{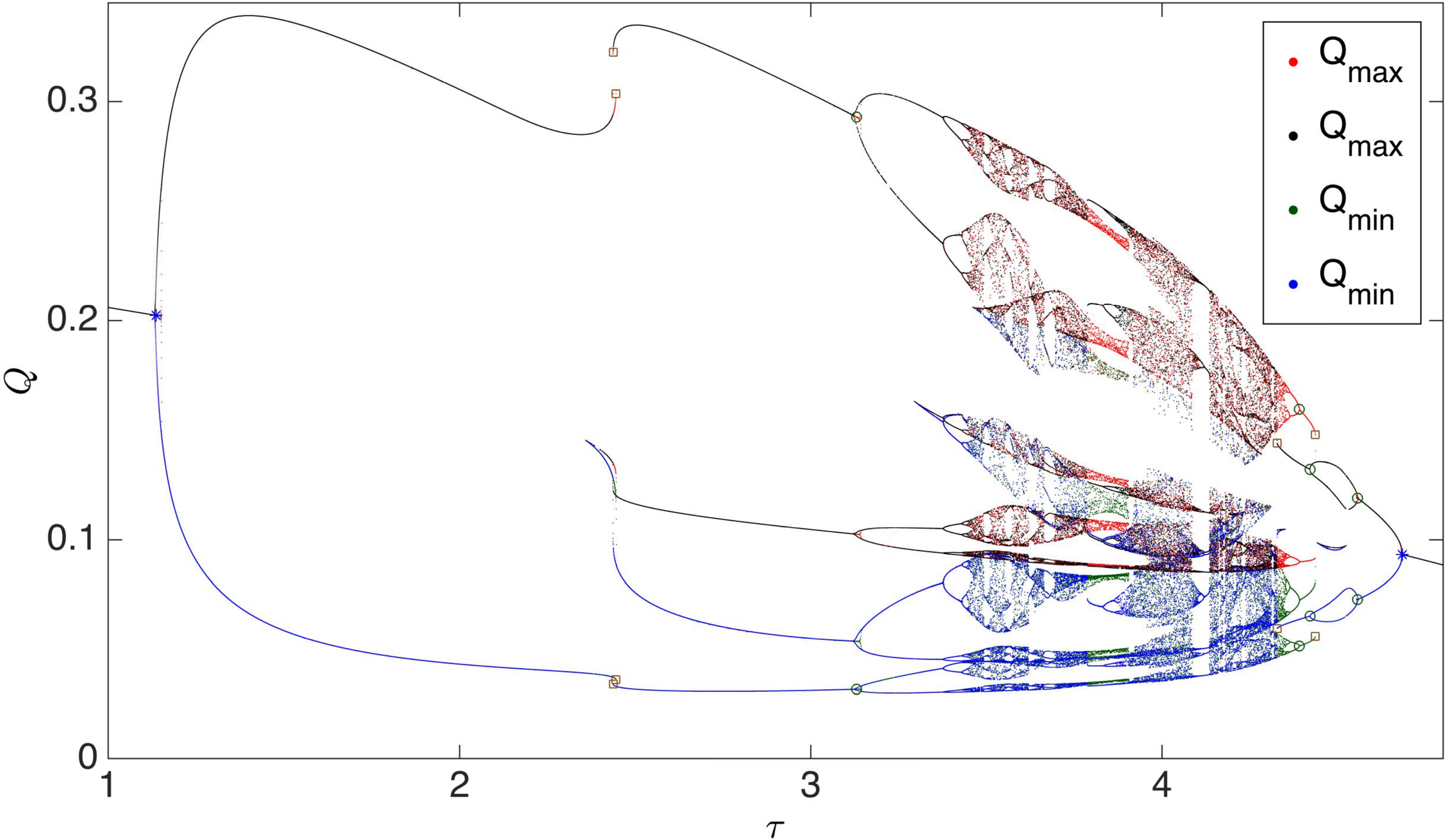}
\hspace*{-2mm}
\end{minipage}
\begin{minipage}{0.25\columnwidth}
\vspace*{-0mm}
\includegraphics[width=\columnwidth]{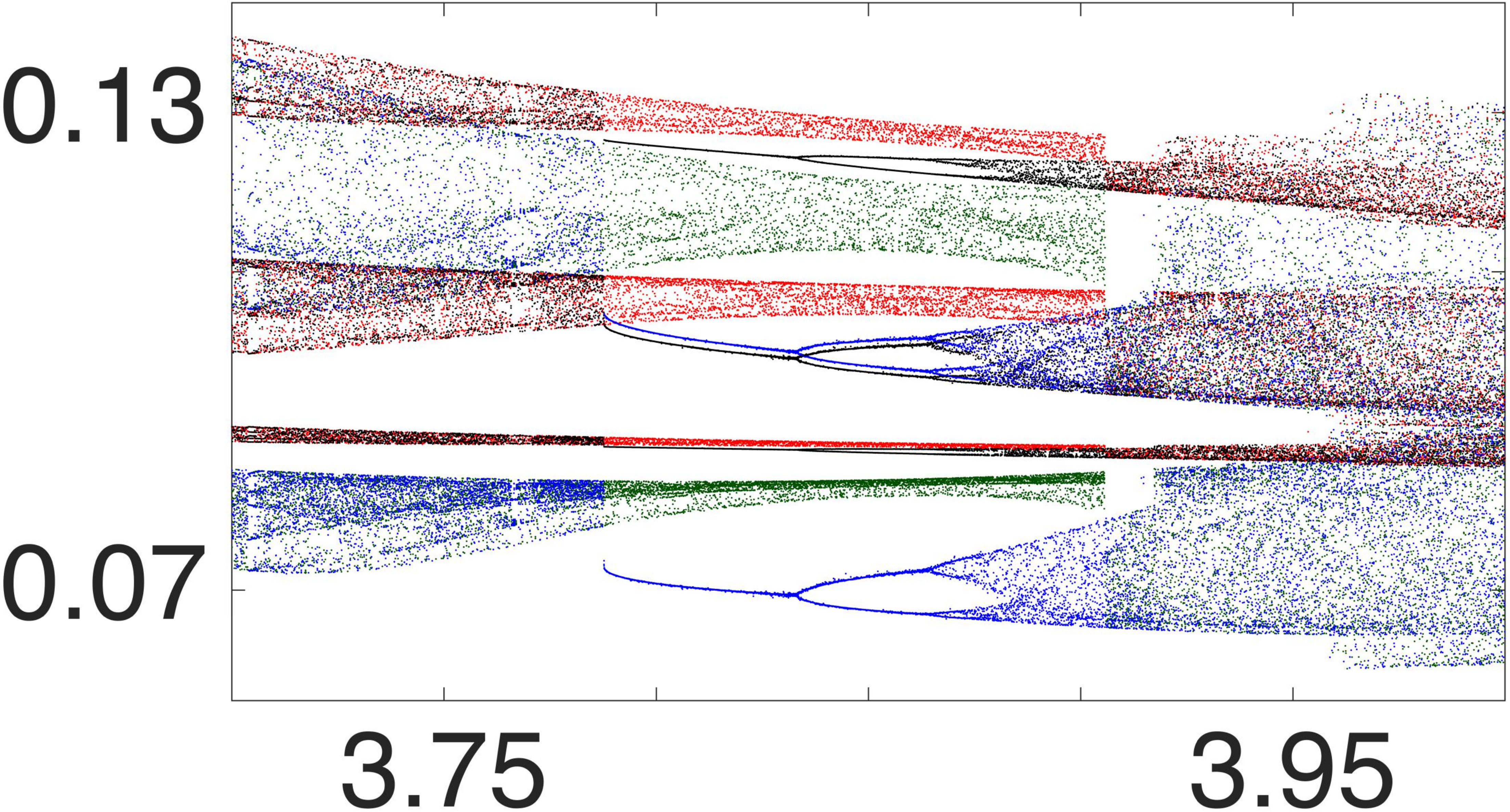}
\\[1mm]
\includegraphics[width=\columnwidth]{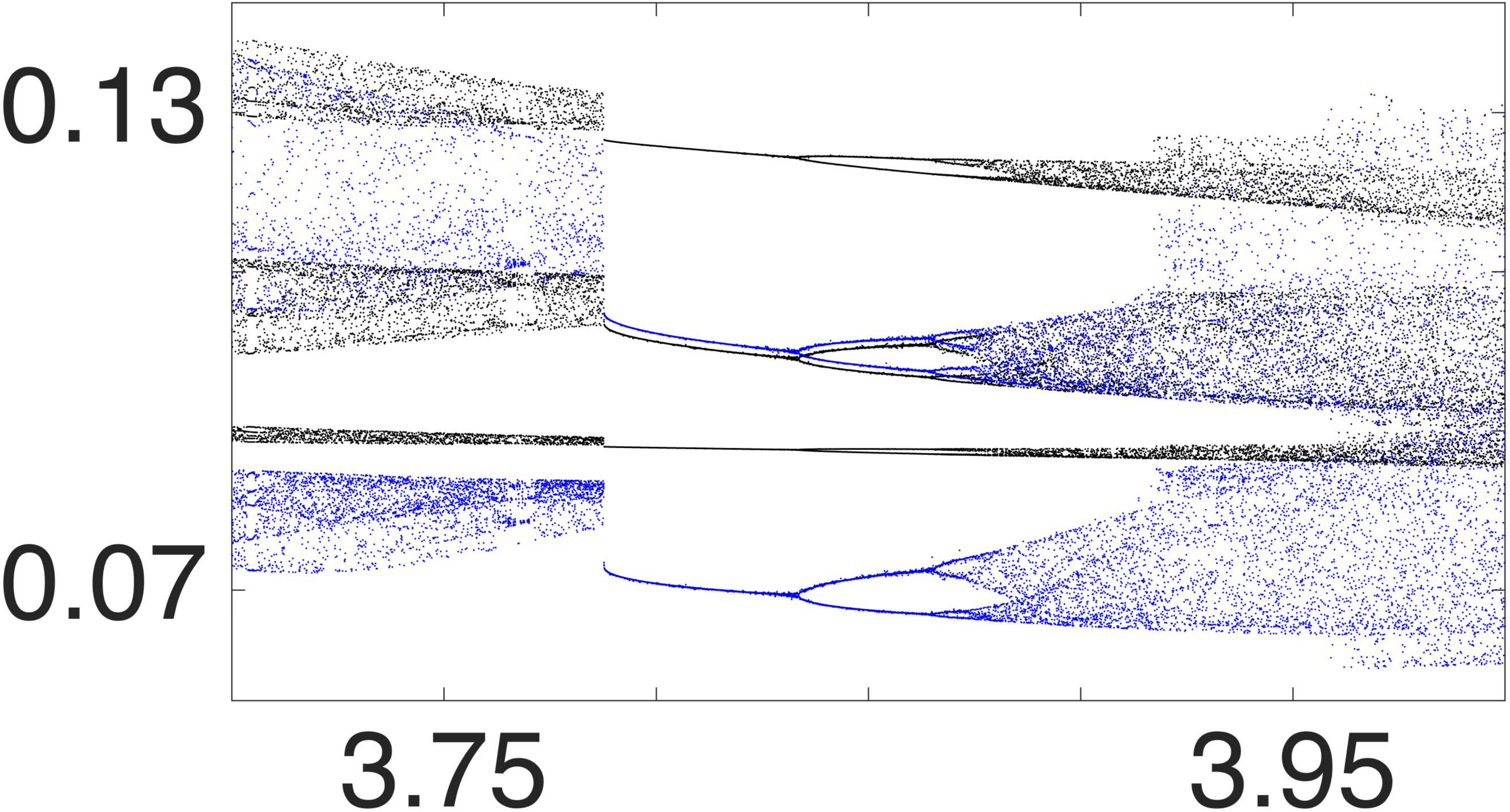}
\\[1mm]
\includegraphics[width=\columnwidth]{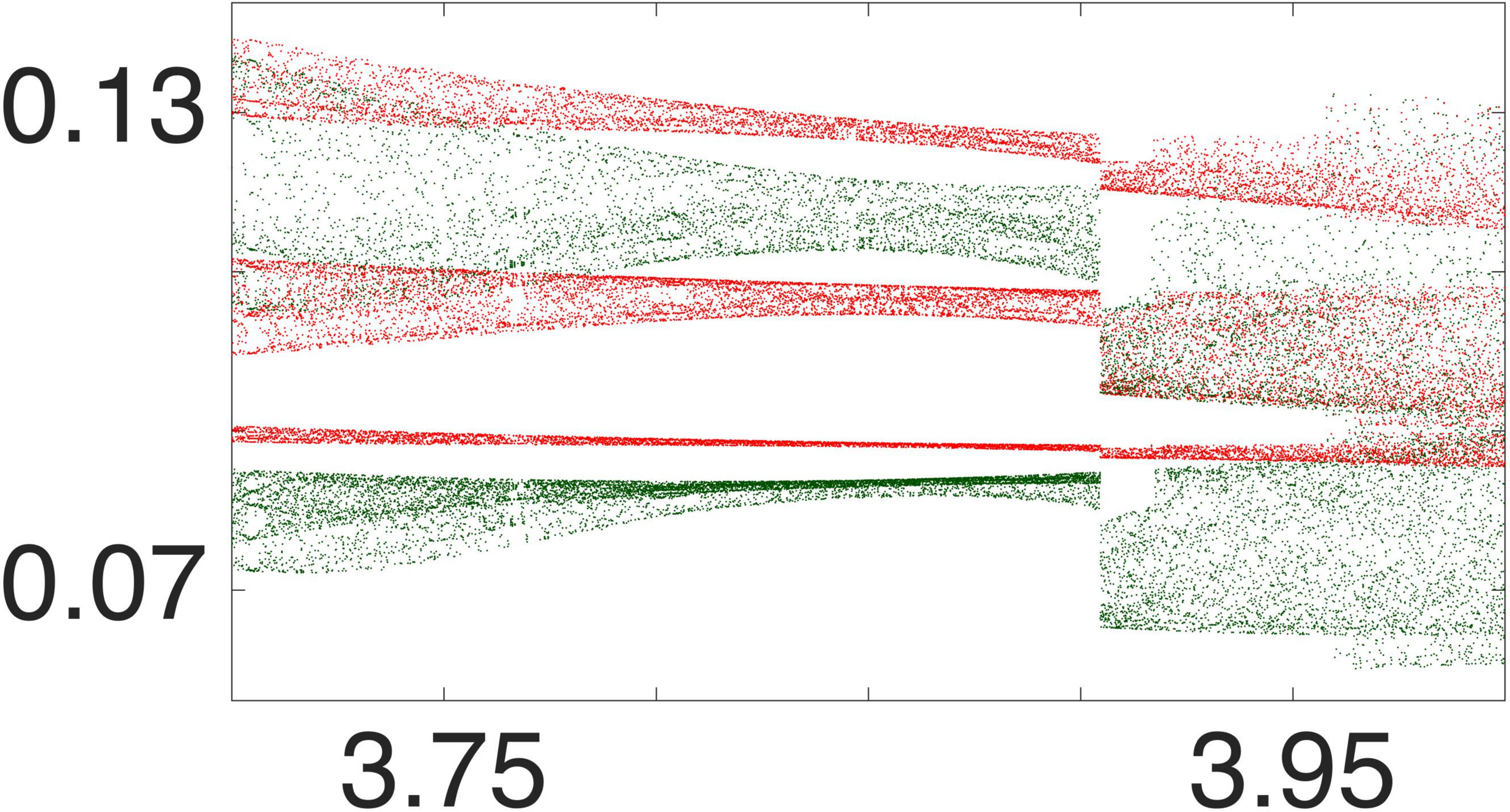}
\end{minipage}
\vspace*{-2mm}
\caption{Orbit diagram showing local maxima and minima of solutions of \eqref{Qprime} as a function of the delay $\tau$,
with $\kappa=0.865$ and other parameters taking their values from Table~\ref{tab.model.par}.
The red and green dots denote respectively the local maxima and minima computed
along a mesh with 30400 points for increasing $\tau$,
while the black and blue dots denote the local maxima and minima
computed by decreasing $\tau$.
The bifurcation points of primary branch are identified using the same symbols as in Figures~\ref{fig:KappaCont} and~\ref{fig2_HSC_Biftool1D_KappaDA}.
The upper side panel shows a detail from the main panel, while the other two side panels show just the
decreasing and increasing parameter scans, illustrating bistability and hysteresis in the system.}
\label{fig.chaos0}
\end{figure}
\begin{figure}[th]
\includegraphics[width=0.49\textwidth,height=44mm]{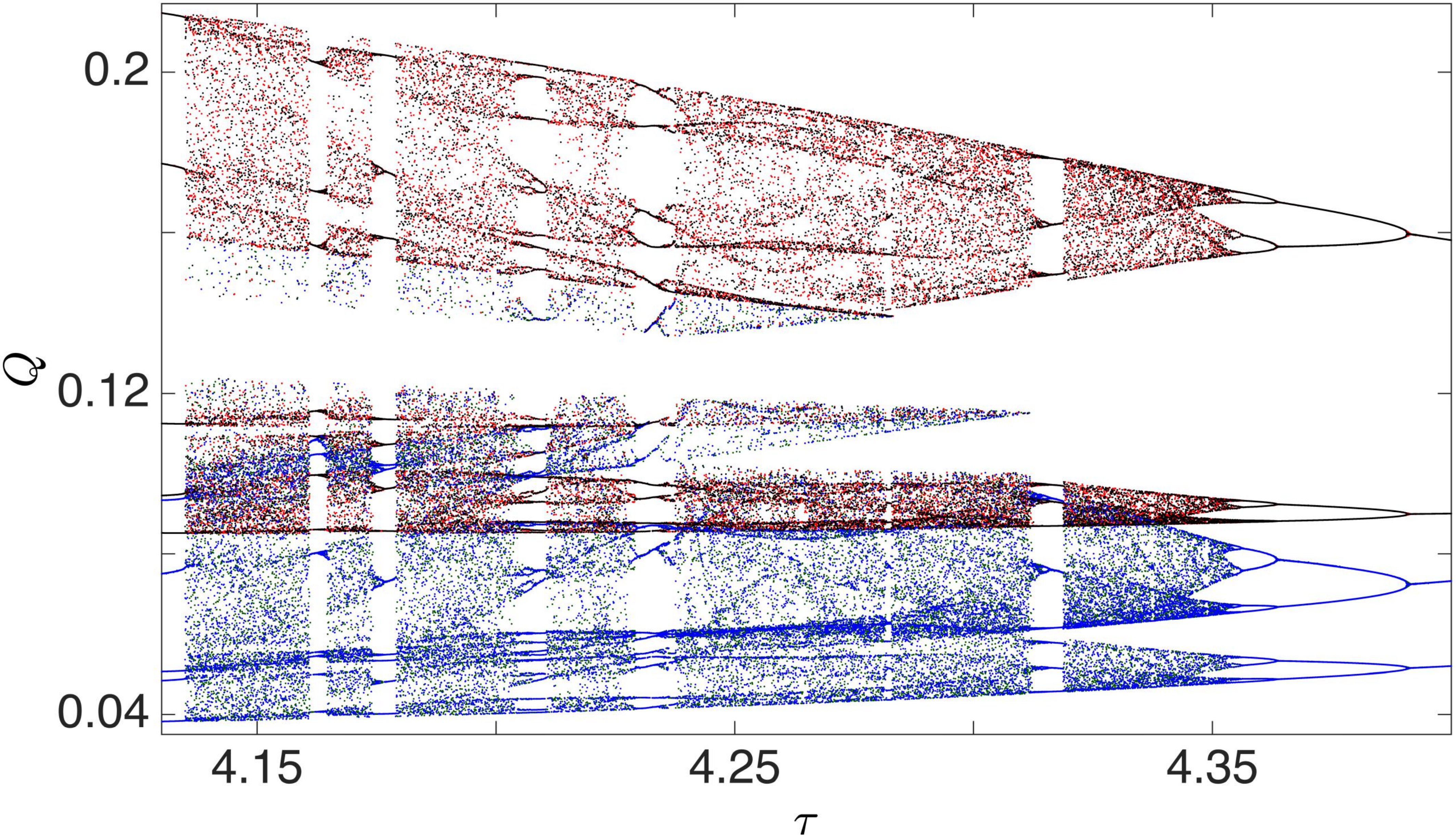}
\put(-245,2.5){\footnotesize{\textit{(i)}}}
\hspace*{1mm}
\includegraphics[width=0.49\textwidth,height=44mm]{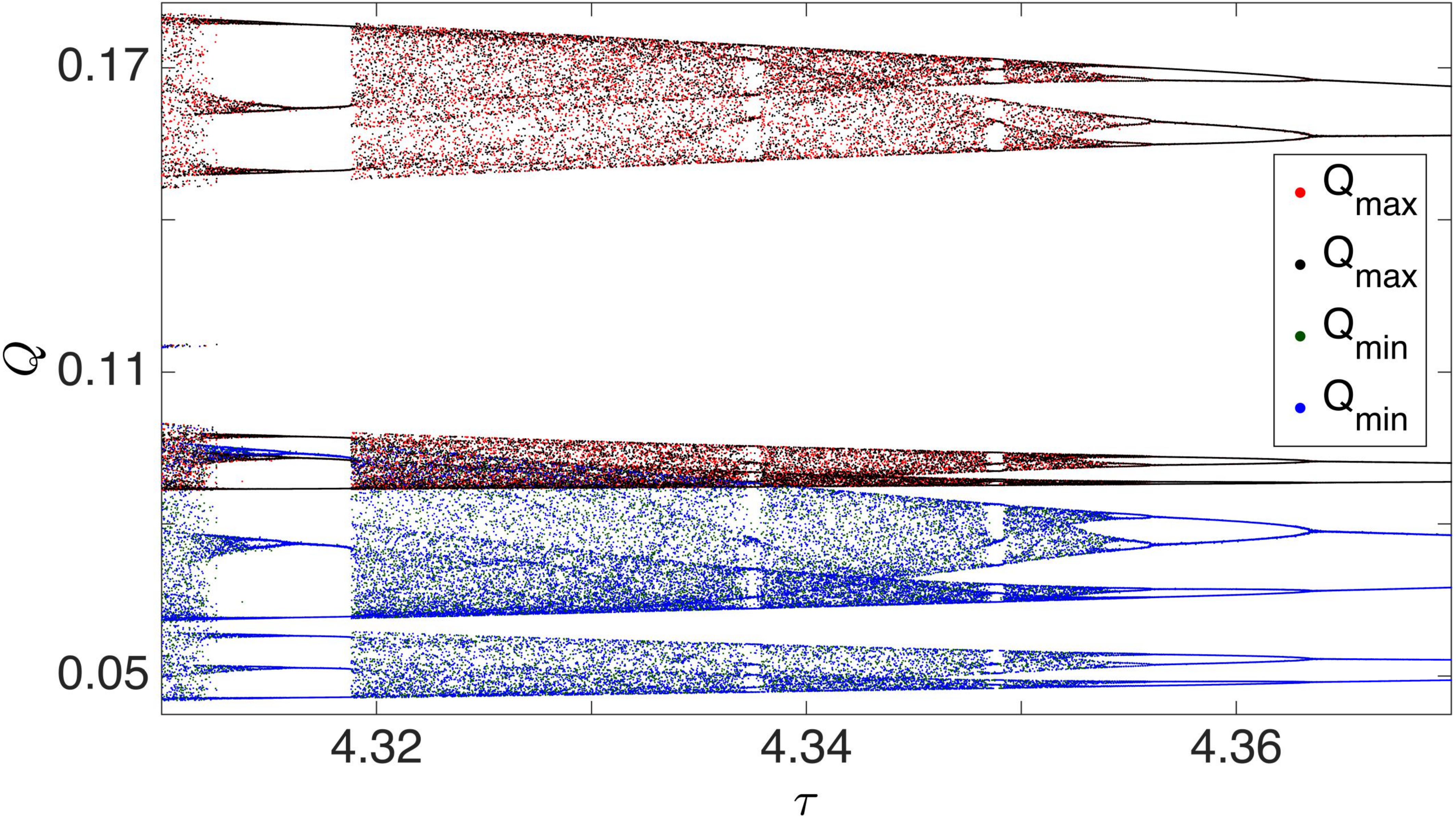}
\put(-245,2.5){\footnotesize{\textit{(ii)}}}
\vspace*{-2mm}
\caption{A sequence of windows of periodic dynamics with parameter intervals of apparent chaotic dynamics.
Panel (i) shows a zoom of part of the Figure~\ref{fig.chaos0}, while panel (ii) shows a zoom of part of panel (i).
}
\label{fig.chaosB}
\end{figure}

In Figure~\ref{fig2_HSC_Biftool1D_KappaDA} we see that for $\kappa\in(0.795,0.929)$ both steady states and the periodic orbits on the main branch and on the period doubled branch are all unstable. However from Theorem~\ref{theorem.bound} we know that the dynamics must remain bounded, and so there must be a global attractor for these parameters. This parameter interval of unstable solutions for the $\kappa$ continuation lies between two period doubling bifurcations in Figure~\ref{fig2_HSC_Biftool1D_KappaDA}, which is also inside the lobe of period-doubling bifurcations depicted in the two-parameter continuation in $\kappa$ and $\tau$ in Figure~\ref{fig:KappaTau_2D_Cont}, and we investigate the dynamics within this region.

In Figure~\ref{fig.chaos0} we present an orbit diagram for~\eqref{Qprime} as $\tau$ is varied across this region with $\kappa=0.865$. Orbit diagrams are usually produced for maps, and we reduce the solution of~\eqref{Qprime} to a map by considering the crossings of a Poincar\'e section. Previously, we considered Poincar\'e sections with $Q(t)$ constant, which would not work so well in this case because the value of $Q^*$ changes
as $\tau$ is varied,
and we would need to vary the constant to ensure that the orbits cross the Poincar\'e section. Instead, we consider the local maxima and minima of $Q(t)$ along the solution, or equivalently the points where $Q'(t)=0$ with $Q''(t)<0$ or $Q''(t)>0$ (respectively).
For each value of $\tau$ using the MATLAB \texttt{dde23} routine~\cite{Matlab} we integrate through a time interval of $50\tau$ days, then plot the value of $Q$ at its last local maxima and minima.
Since the dynamics are more interesting for some $\tau$ values then others,
we defined a $\tau$ mesh with $9121$ points from $1$ to $3.4$, $19000$ points from $3.4$ to $4.4$, and
$2281$ points from $4.4$ to $5$.
These three meshes were combined to form a mesh of $30400$ points
from 1 to 4 for increasing $\tau$.
A second mesh with 30399 points interleaved with the previous mesh was used for decreasing $\tau$.
For each mesh point the last $\tau$ time units of the solution was used as the initial function to compute the solution at the next mesh point.
The results displayed in Figure~\ref{fig.chaos0} clearly reveal the bifurcations already shown in Figure~\ref{fig:KappaTau_2D_Cont} including the Hopf bifurcations at $\tau=1.1364$ and $4.6841$,
the fold bifurcations near $\tau= 2.4379$, $2.4451$, $4.3281$ and $4.4364$ and the period doubling bifurcations at $\tau= 3.1303$, $4.3909$, $4.4215$ and $4.5575$. Between
those period doubling bifurcations, Figure~\ref{fig.chaos0} reveals numerous period doubling cascades and several parameter intervals of apparent chaotic dynamics with windows of periodic dynamics.

For some intervals of parameter values the results of sweeping left to right and right to left are significantly different, revealing the bistability of attracting states and hysteresis between them. The side panels to Figure~\ref{fig.chaos0} illustrate this for $\tau\approx3.85$, where increasing $\tau$ sequentially appears to reveal chaotic dynamics, but decreasing $\tau$ reveals a stable periodic-orbit which appears to undergo a period doubling cascade leading to a small interval of parameter values for $\tau\approx3.9$ for which there are apparently co-existing chaotic attractors.

Figure~\ref{fig.chaosB} shows two successive magnifications from a small region of Figure~\ref{fig.chaos0}. To reveal the finer structure, we recomputed the orbit diagram for each of these $\tau$ intervals for $30000$ equally spaced increasing $\tau$ values, and a second interleaved mesh with one fewer point
with decreasing $\tau$ values.
Figures~\ref{fig.chaos0} and~\ref{fig.chaosB}(i)-(ii) together suggest a self-similarity of the structure with sequences of windows of periodic dynamics separated by intervals of apparent chaotic dynamics, on ever smaller parameter intervals.
While it would be interesting to study the scaling in the period doubling cascades, this is very difficult to do because the mapping is only implicitly defined, and requires that we numerically solve the DDE~\eqref{Qprime} between each extremum of $Q(t)$. Instead here, we will investigate the nature of the chaotic solutions.
%
%
\begin{figure}[t]
\includegraphics[width=0.49\textwidth,height=44mm]{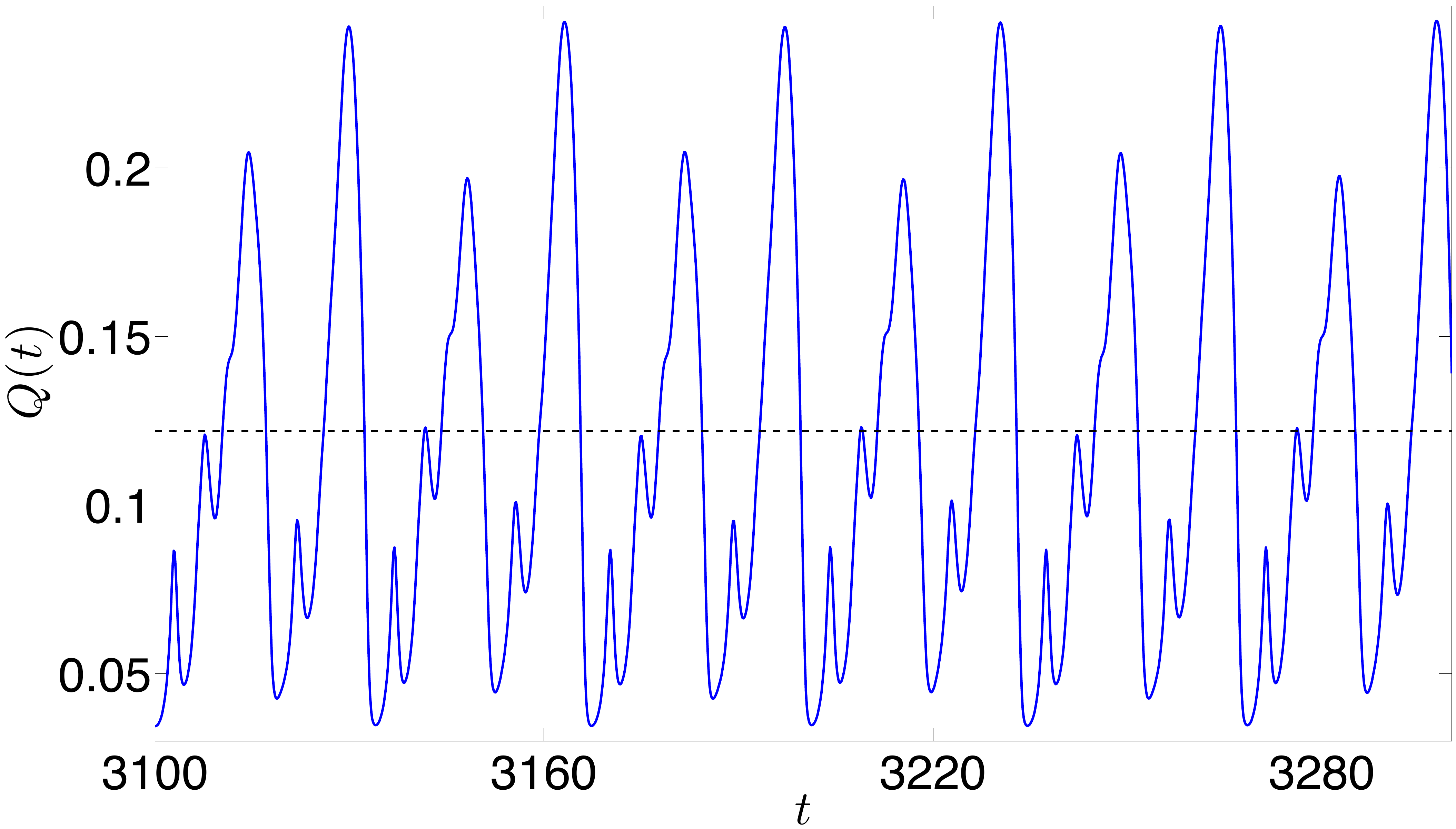}
\put(-245,2.5){\footnotesize{\textit{(i)}}}
\hspace*{1mm}
\includegraphics[width=0.49\textwidth,height=44mm]{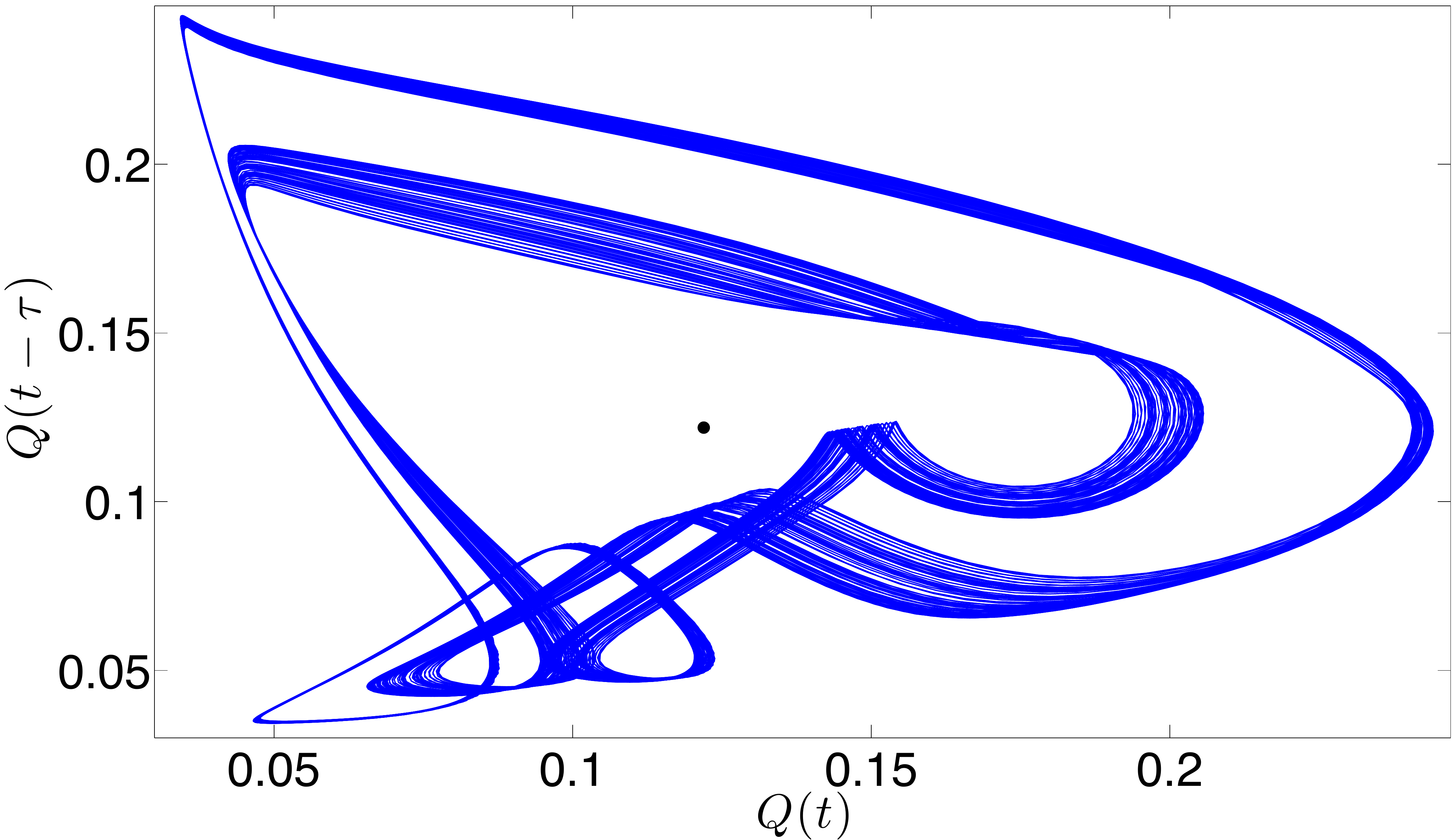}
\put(-245,2.5){\footnotesize{\textit{(ii)}}}

\vspace*{2mm}

\includegraphics[width=0.49\textwidth,height=44mm]{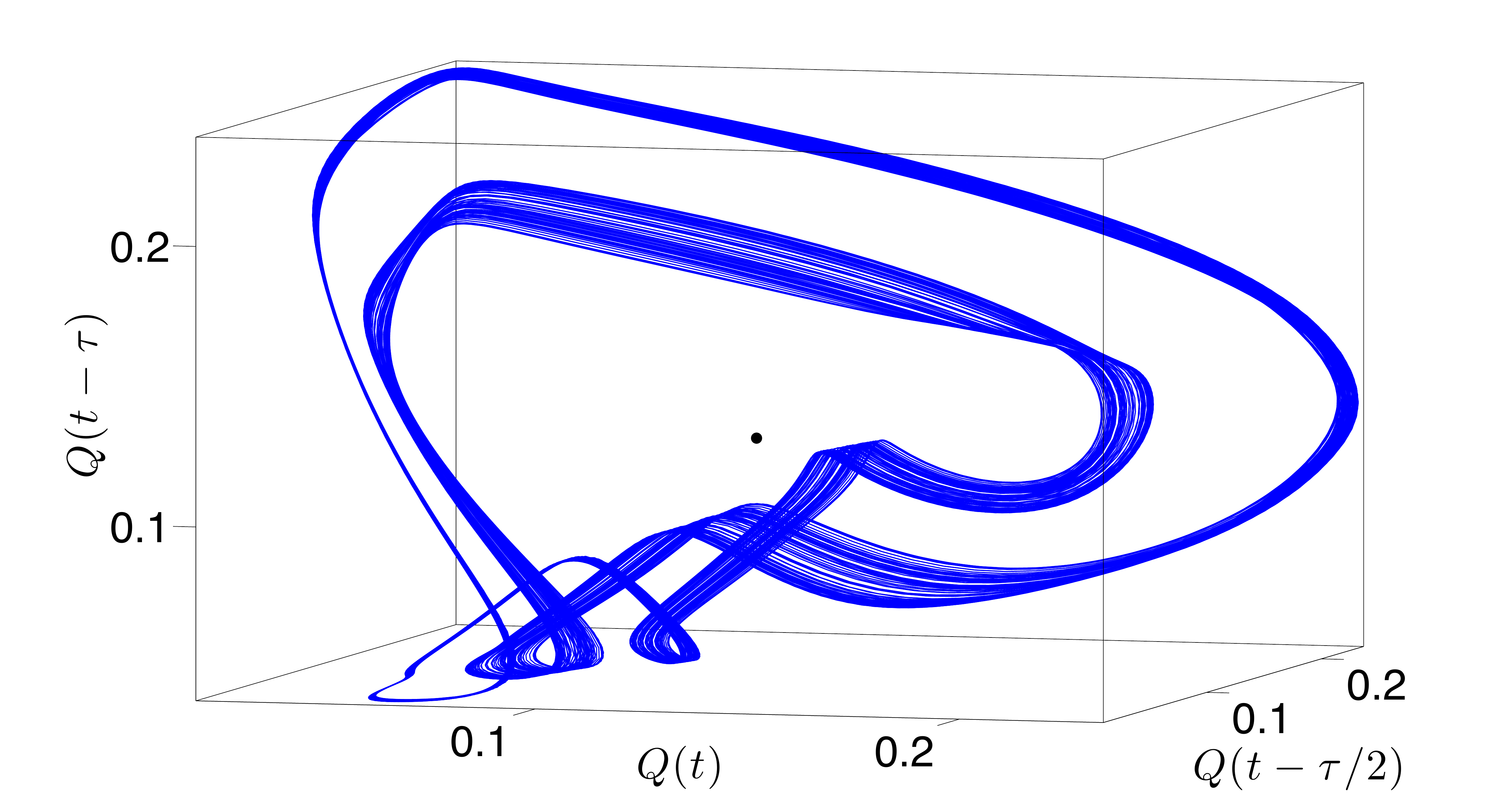}
\put(-245,2.5){\footnotesize{\textit{(iii)}}}
\hspace*{1mm}
\includegraphics[width=0.49\textwidth,height=44mm]{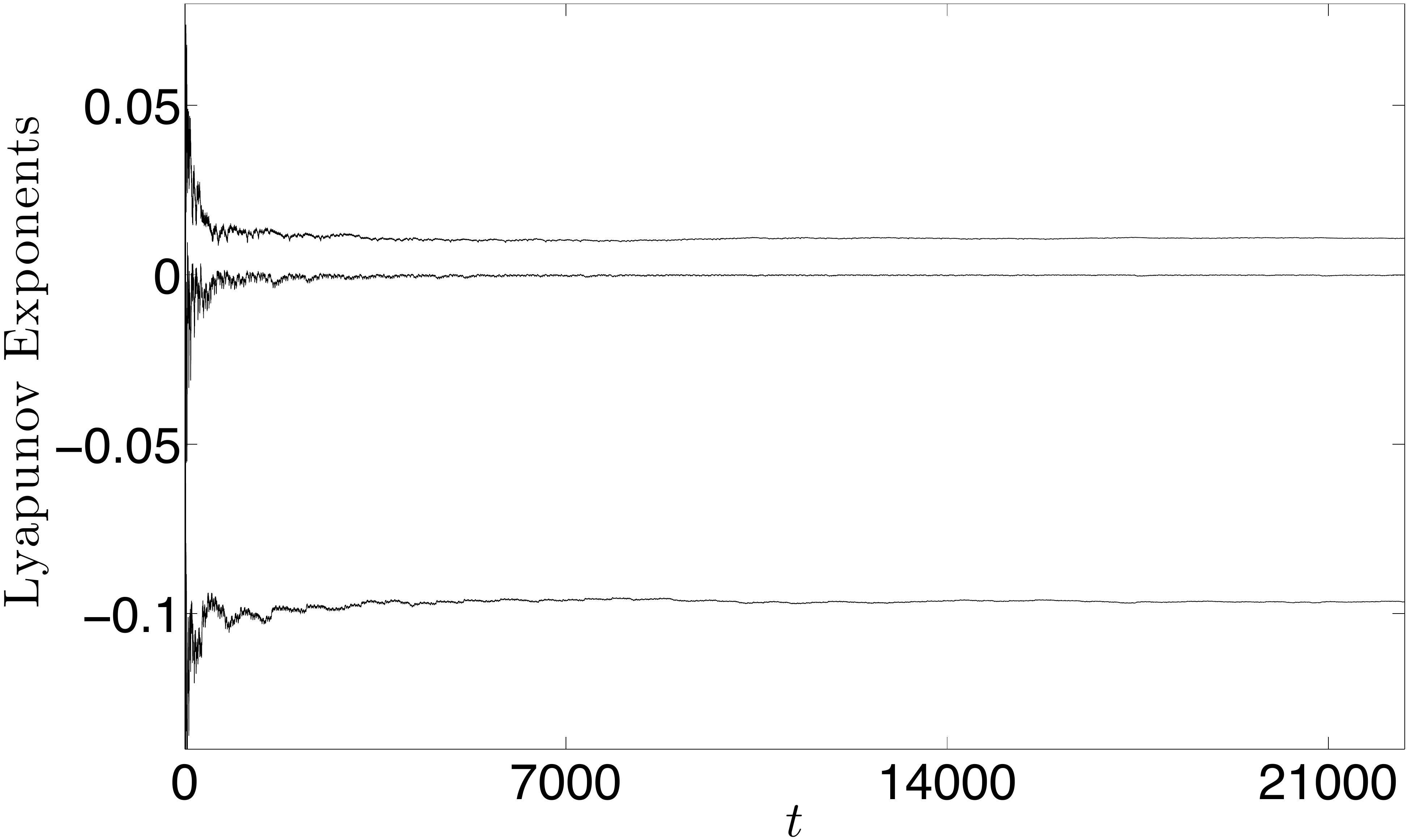}
\put(-245,2.5){\footnotesize{\textit{(iv)}}}
\vspace*{-2mm}
\caption{Chaotic orbit for $\tau=3.9$ and $\kappa=0.865$.
(This parameter set is indicated by the red triangle in Figure~\ref{fig:KappaTau_2D_Cont} inset)
(i) Orbit segment.
(ii) Time-delay embedding  $(Q(t),Q(t-\tau))$ and (iii) solution space $(Q(t),Q(t-\tau/2),Q(t-\tau))$  of the solution for $t\in[3000,6000]$.
The black line and black dots in (i)-(ii) represent the unstable steady state $Q^*$.
(iv) Convergence of the first three Lyapunov exponents.
}
\label{fig.chaos1}
\end{figure}

With $\kappa=0.865$ and $\tau=3.9$ the orbit diagram suggests that the dynamics should be chaotic, and this case is illustrated in Figure~\ref{fig.chaos1}. At first glance the time series in panel (i) resembles a period-doubled solution, but the maxima close to $Q(t)=0.2$ actually alternate in height, so the solution is closer to a period-quadrupled solution. However, the time-delay embeddings in panel (ii) and (iii) appear to show that the orbit is not periodic but that there is a very structured low-dimensional attractor.
The solutions were computed by taking a constant initial history function close to $Q^*$ and integrating with \texttt{dde23} through the transient dynamics until the orbit converges to the attractor. The segment of the solution trajectory that is displayed in Figure~\ref{fig.chaos1}(ii) and (iii) spans 3000 days.
The initial convergence of the first three Lyapunov exponents is illustrated
in Figure~\ref{fig.chaos1}(iv), but the full computation
of the exponents,
using the method
of Breda and Van Vleck~\cite{Breda_2014},
is over a time interval of $30000$ days, or 82 years.
The dynamics are not periodic over this time interval and the
leading Lyapunov exponents are computed numerically to be
$0.0107$, $-0.0002$, $-0.0966$ and $-0.1577$. The second Lyapunov exponent here is $0$ to numerical accuracy, and the presence of a positive Lyapunov exponent indicates chaos. The appearance of the orbit being close to a period-quadrupled orbit is most likely just due to the provenance of the chaotic orbit being created through a period-doubling cascade. Had we only looked at the time-series we could have been wrongly led to conclude that the dynamics was not chaotic; the time-series of the solution alone is very rarely sufficient to determine the nature of the dynamics in the interesting cases.
For the attractor shown in Figure~\ref{fig.chaos1} the Lyapunov dimension is computed from~\eqref{lyapdim} to be $d=2.11$.

\begin{figure}[t]
\includegraphics[width=0.49\textwidth,height=44mm]{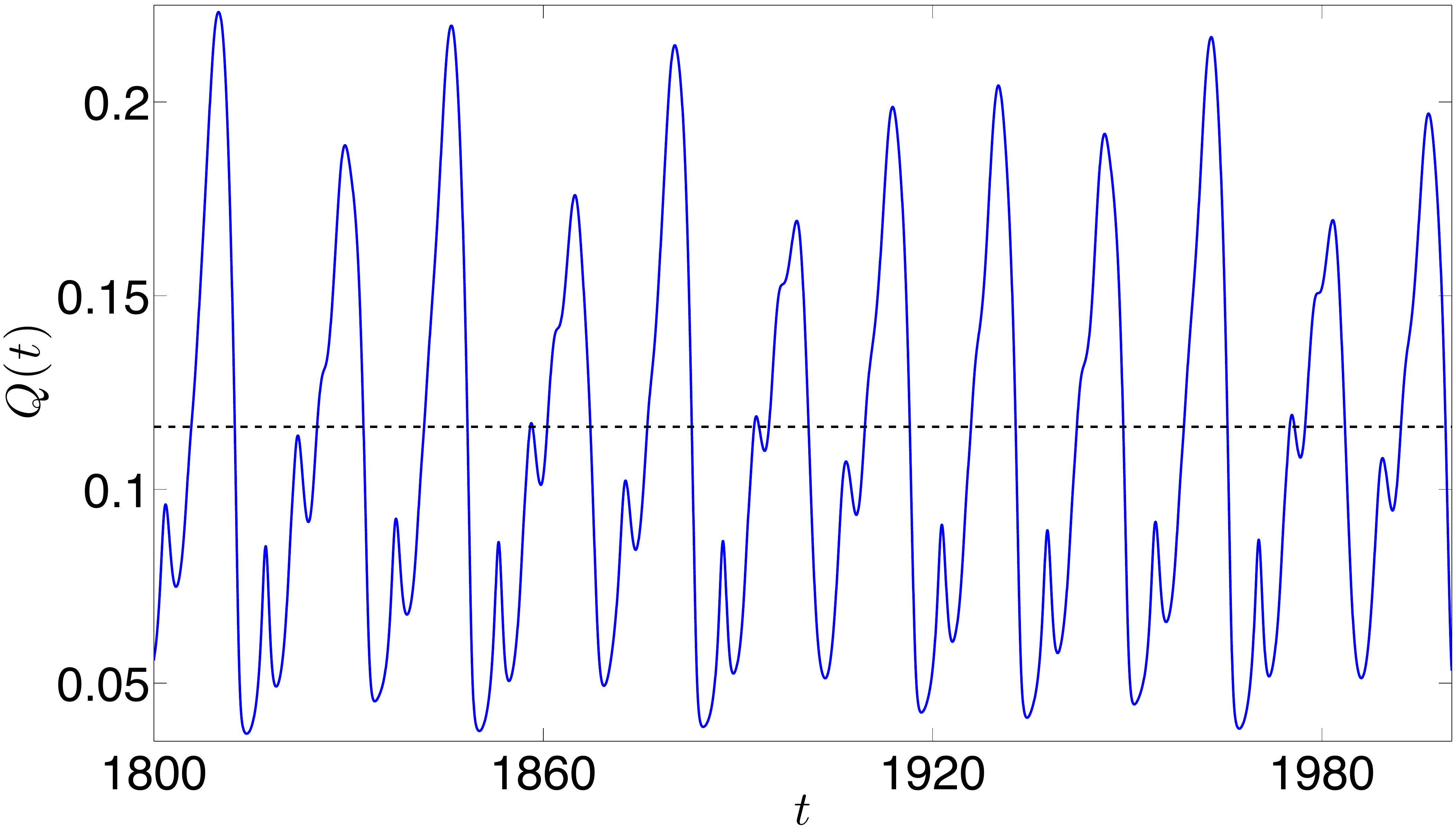}
\put(-245,2.5){\footnotesize{\textit{(i)}}}
\hspace*{1mm}
\includegraphics[width=0.49\textwidth,height=44mm]{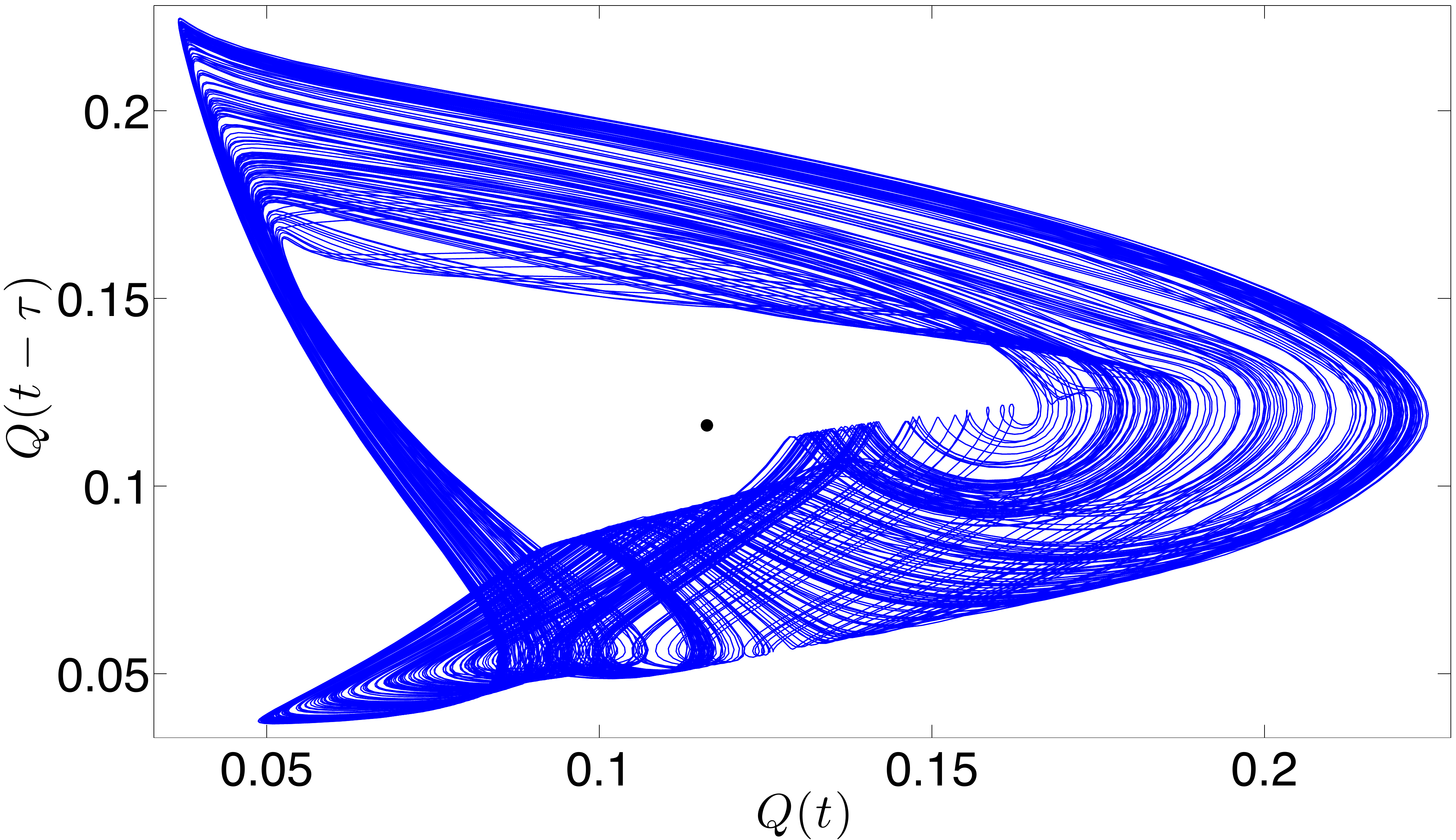}
\put(-242,2.5){\footnotesize{\textit{(ii)}}}

\vspace*{2mm}

\includegraphics[width=0.49\textwidth,height=44mm]{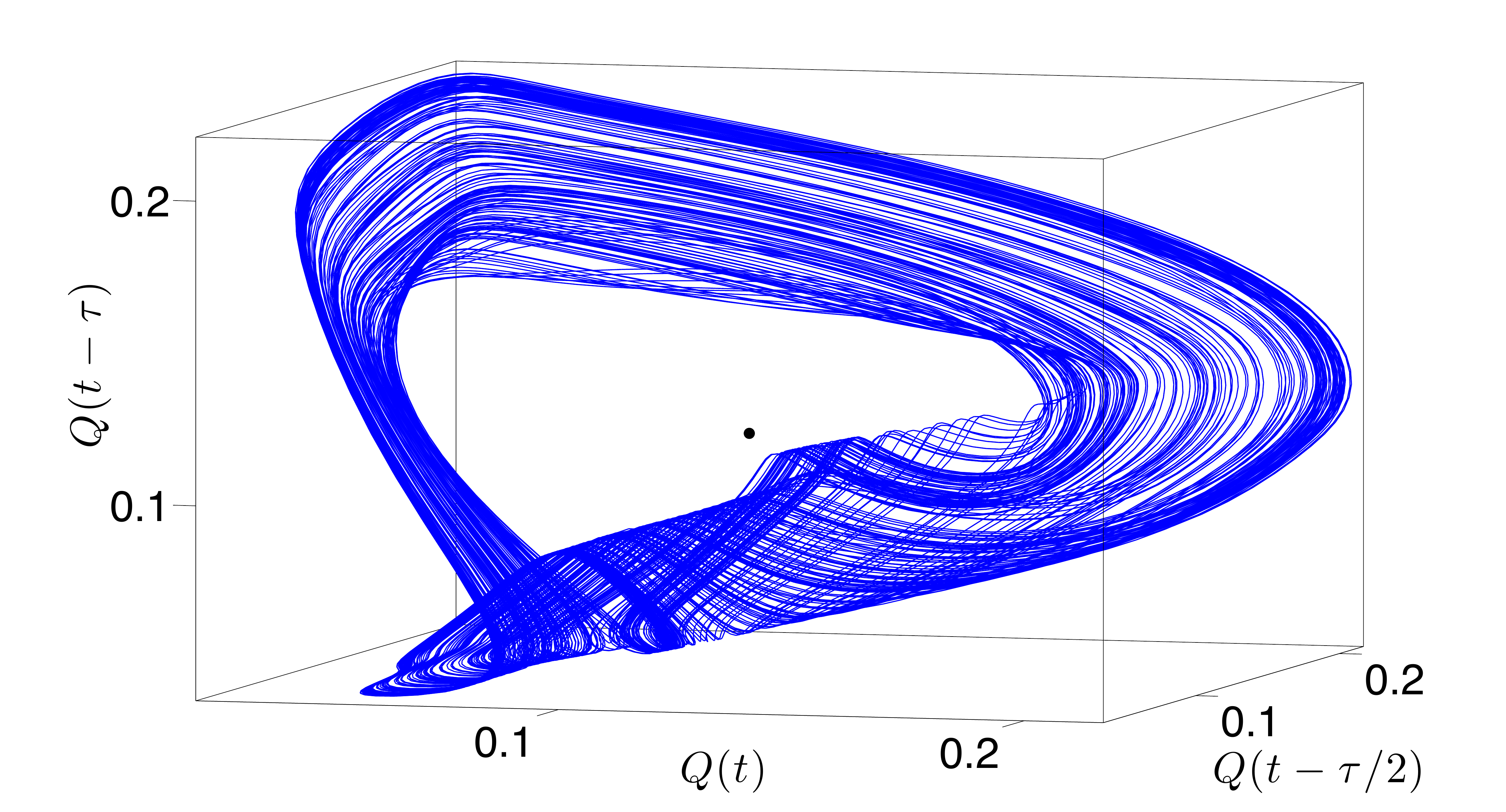}
\put(-240,2.5){\footnotesize{\textit{(iii)}}}
\hspace*{1mm}
\includegraphics[width=0.49\textwidth,height=44mm]{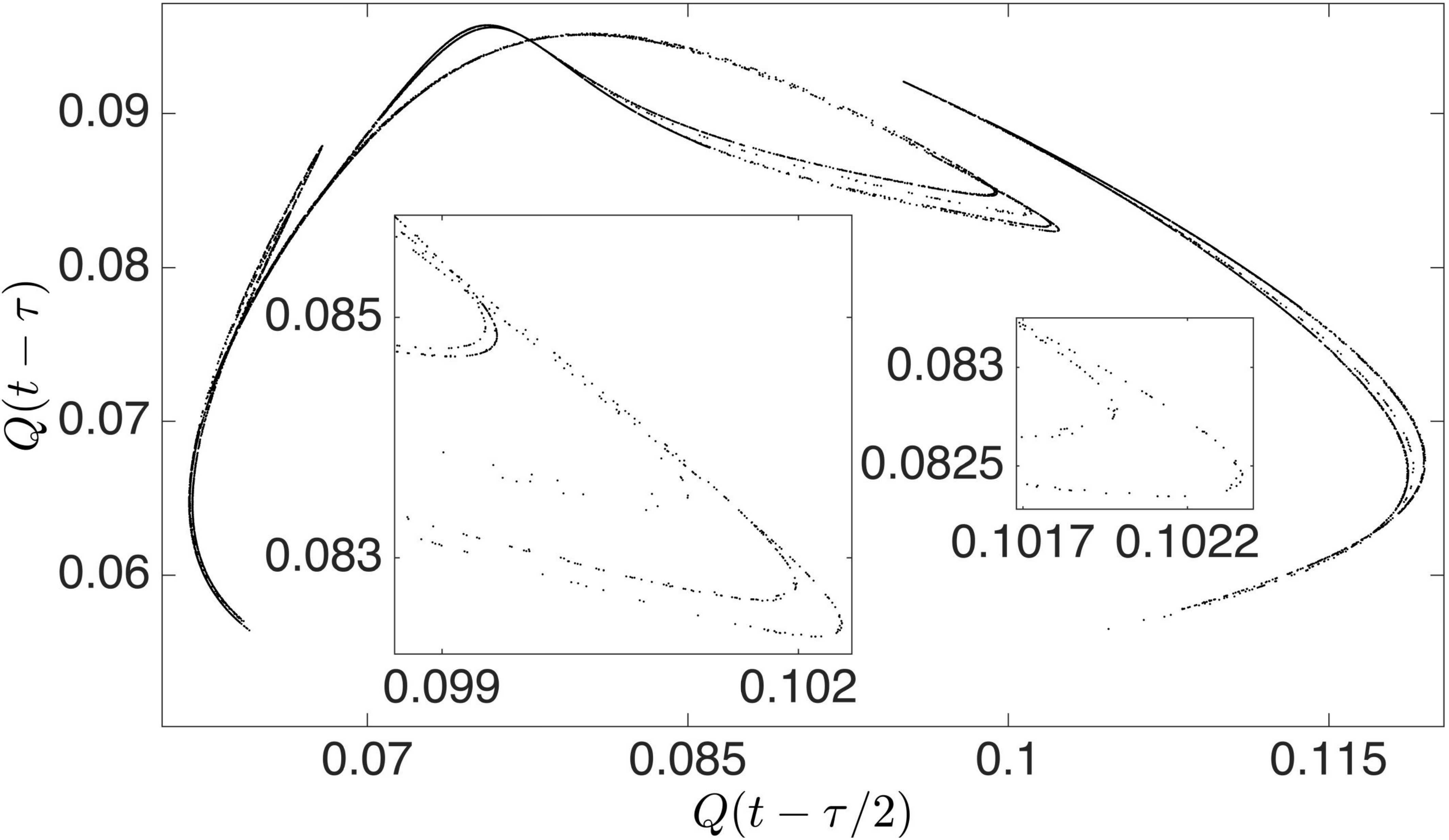}
\put(-240,2.5){\footnotesize{\textit{(iv)}}}
\vspace*{-2mm}
\caption{Chaotic orbit for $(\kappa,\tau)=(0.865,4.07)$.
(i) Segment of solution time series on the attractor.
Time-Delay embeddings (ii) $(Q(t),Q(t-\tau))$, and, (iii) $(Q(t),Q(t-\tau/2),Q(t-\tau))$ for $t\in[3000,6000]$.
The black line and black dots in (i)-(iii) represent the unstable steady state.
(iv) Projection $P(u_t)=(u_t(-\tau/2),u_t(-\tau))$ of function elements $u_t$ in the Poincar\'e section $\cP_\tau=\{u_t: u_t(0)=Q^{*}, u_t'(0)<0\}$.
}
\label{fig5_DdeStemKappaQ0865Torus05}
\end{figure}

As can be seen from Figure~\ref{fig.chaos0} the character of the chaotic dynamics is very sensitive to changes in the parameter values.
Changing $\tau$ from $3.9$ to $4.07$ while keeping $\kappa$ and all the other parameters at their values in Figure~\ref{fig.chaos1}
(see the blue triangle in Figure~\ref{fig:KappaTau_2D_Cont} inset)
the dynamics becomes as shown in Figure~\ref{fig5_DdeStemKappaQ0865Torus05}.
Now the time series in Figure~\ref{fig.chaos1}(i) is visually non-periodic, and the time-delay embedding in Figure~\ref{fig.chaos1}(ii) and (iii) appear to fill more of phase space. This is reflected in the Lyapunov dimension. Computing out to $1.8\times10^5$ days (about $500$ years) the leading Lyapunov exponents are estimated to be $0.03027$, $-0.00009$, $-0.11271$ and $-0.153236$. Using~\eqref{lyapdim} the Lyapunov dimension is computed to be $d=2.268$, larger than in the previous example.

\begin{figure}[t]
\includegraphics[height=36mm,width=1\textwidth]{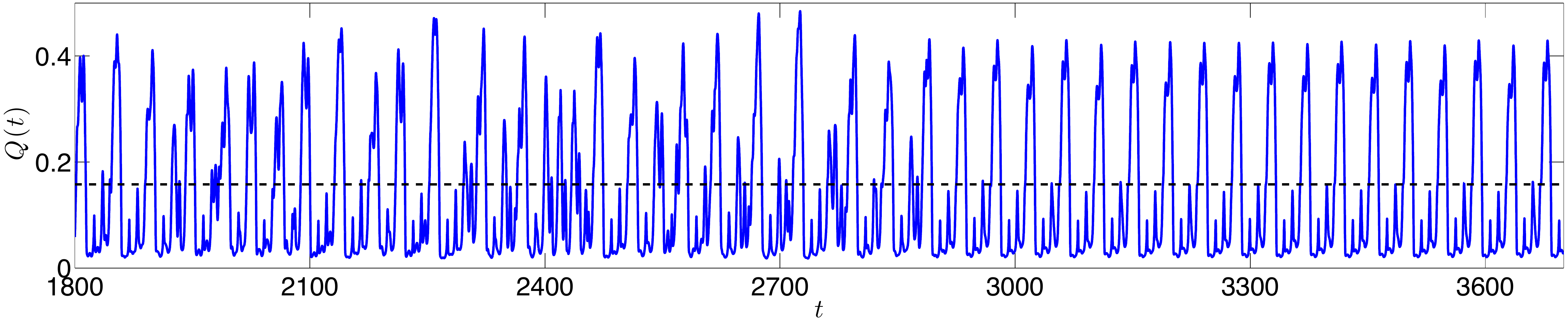}
\put(-503,2.5){\footnotesize{\textit{(i)}}}

\vspace*{2mm}

\includegraphics[width=0.49\textwidth,height=44mm]{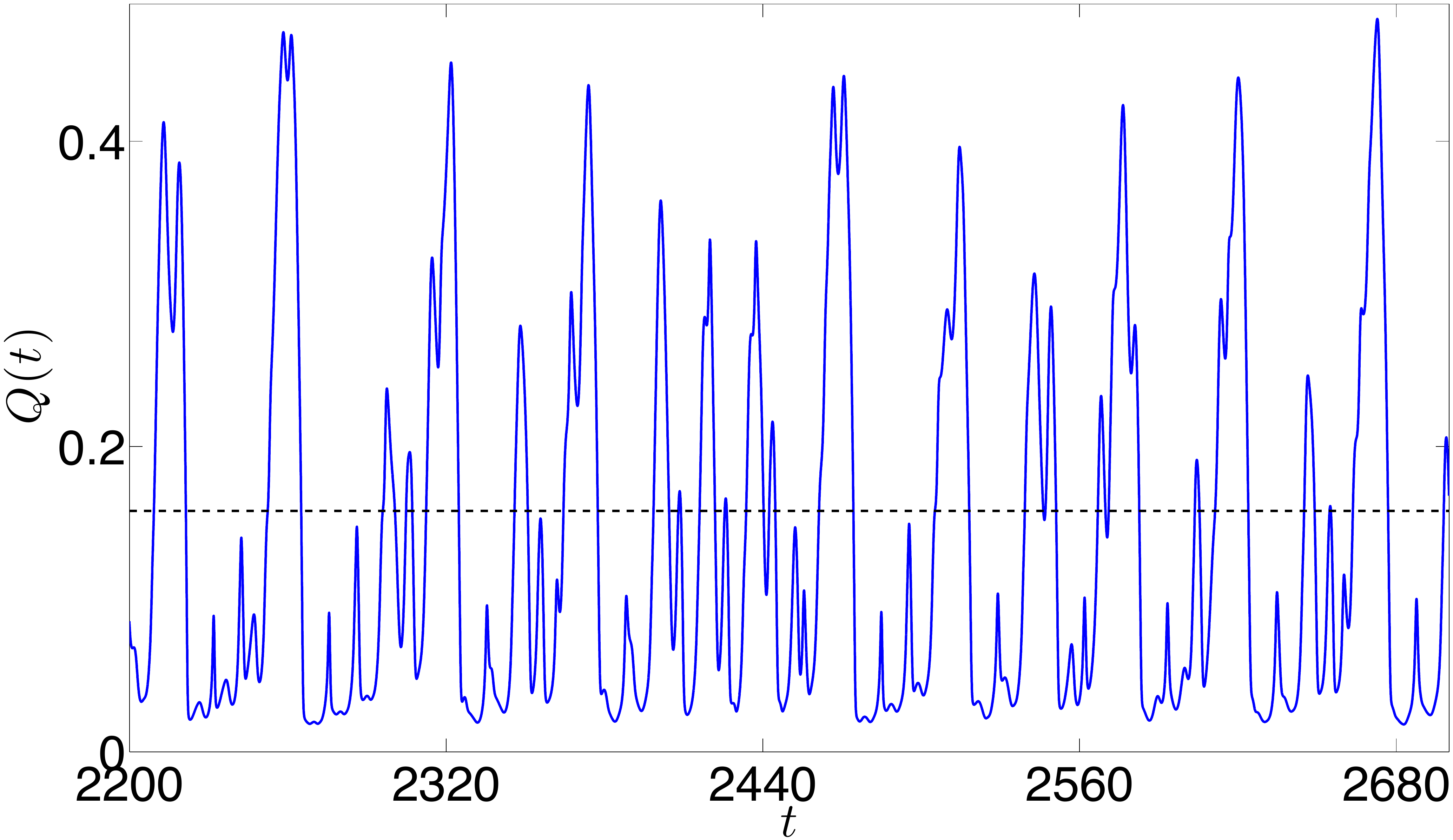}
\put(-251,2.5){\footnotesize{\textit{(ii)}}}
\hspace*{1mm}
\includegraphics[width=0.49\textwidth,height=44mm]{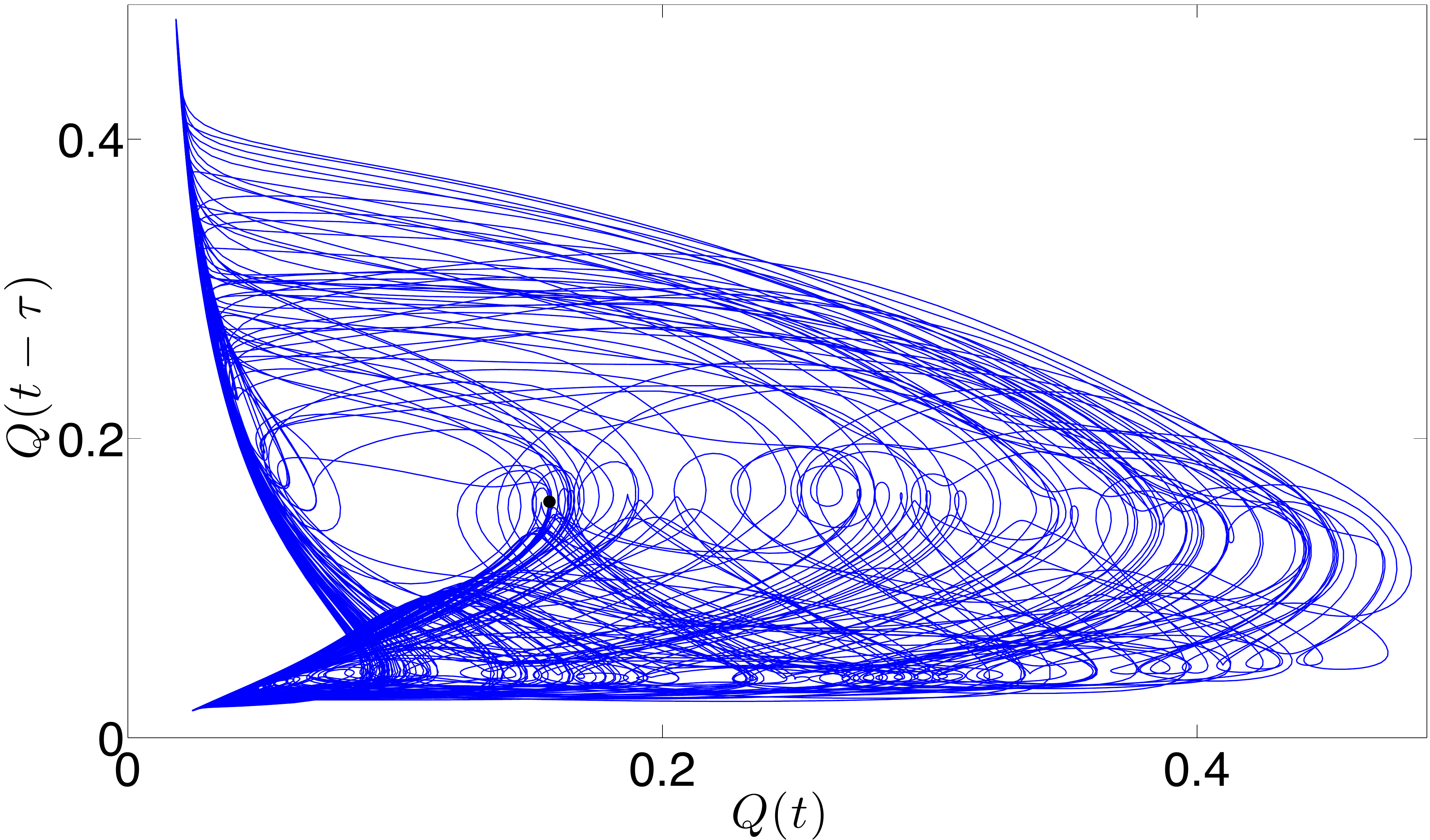}
\put(-247,2.5){\footnotesize{\textit{(iii)}}}

\vspace*{2mm}

\includegraphics[width=0.49\textwidth,height=44mm]{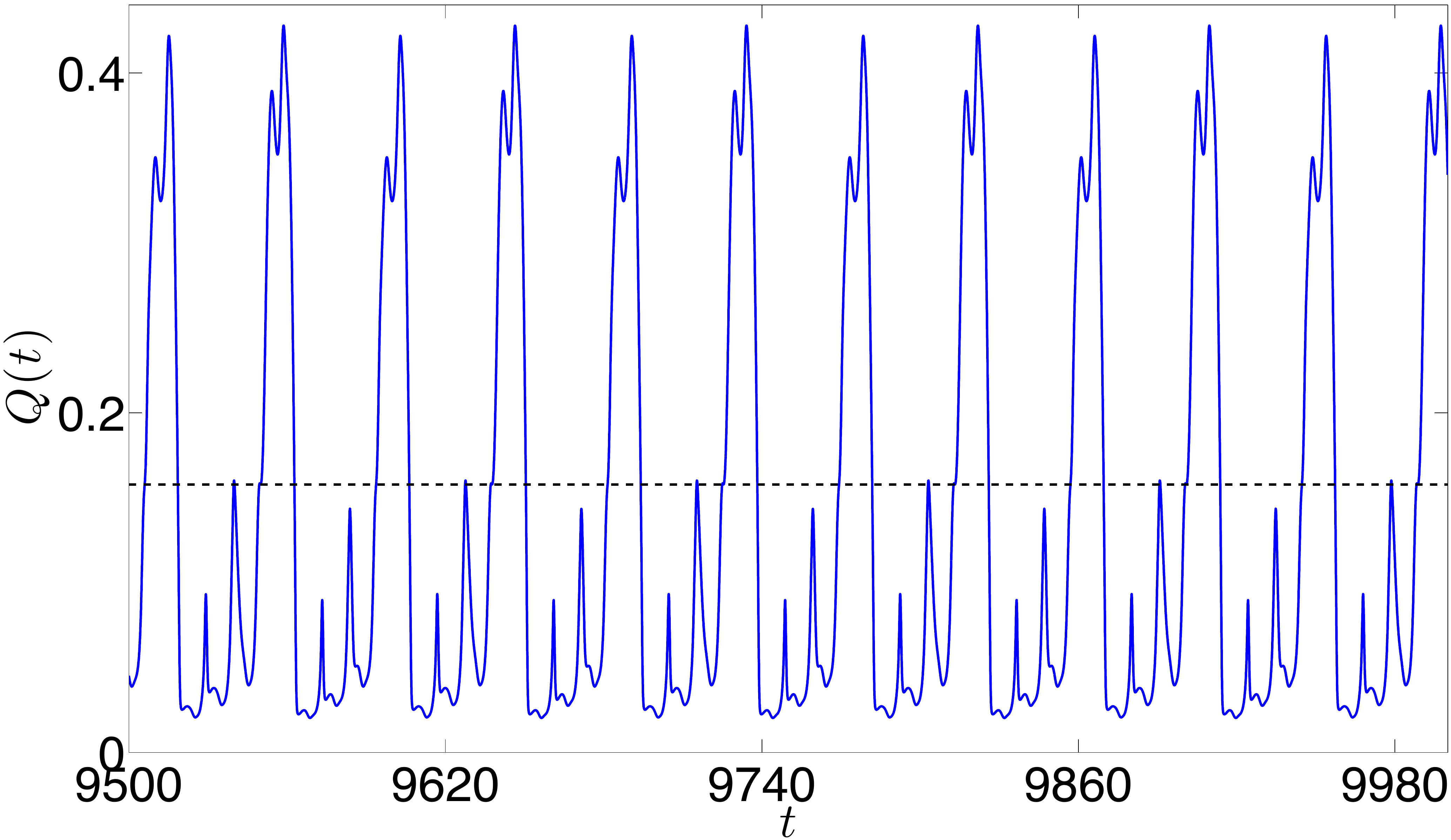}
\put(-251,2.5){\footnotesize{\textit{(iv)}}}
\hspace*{1mm}
\includegraphics[width=0.49\textwidth,height=44mm]{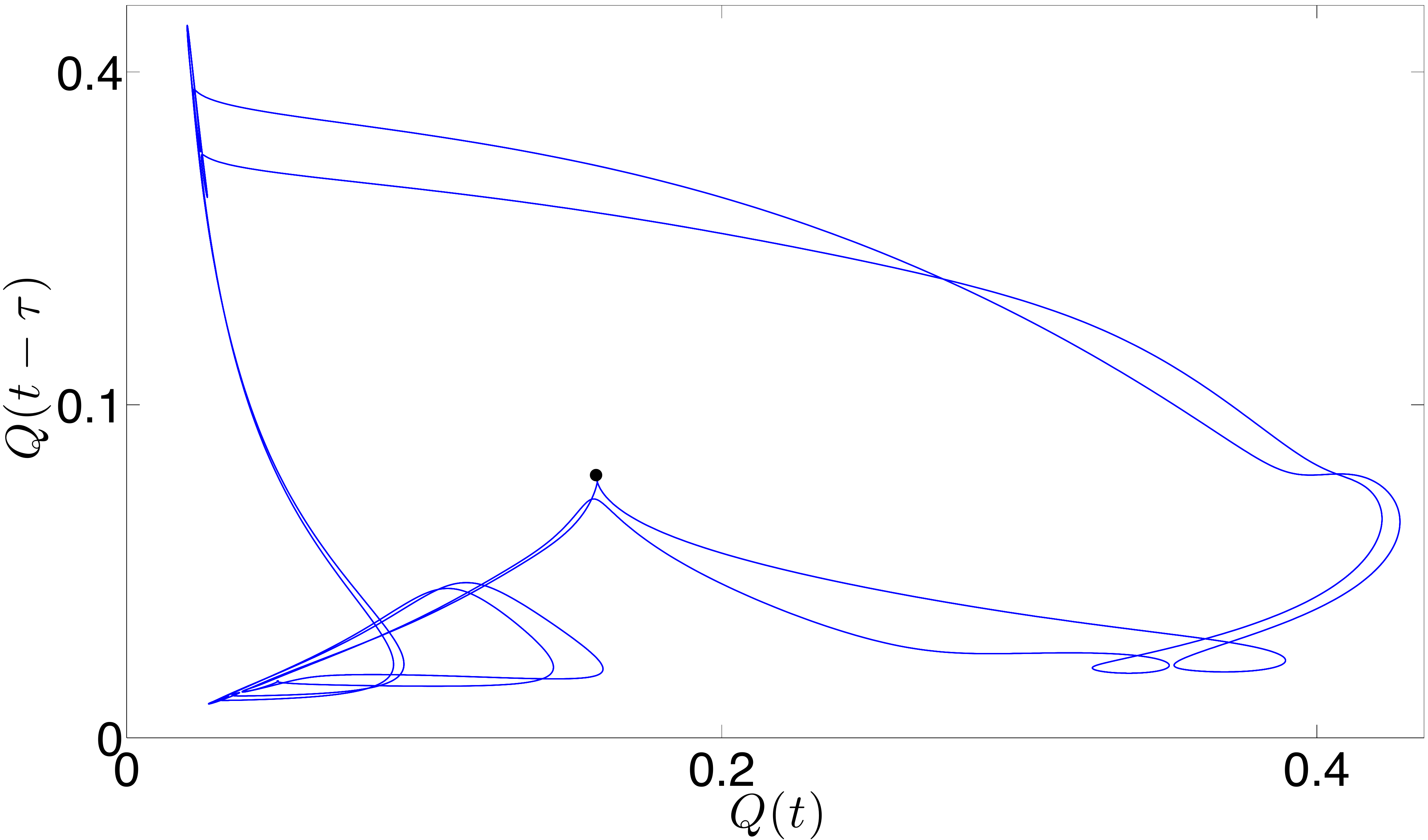}
\put(-245,2.5){\footnotesize{\textit{(v)}}}
\vspace*{-2mm}
\caption{For $\kappa=0.68$, $\gamma=0.0354608$ and $\tau=9.88888$ an orbit which appears to display transient chaos. (i) The transition from non-periodic to periodic motion.
(ii)-(iii) Time series $Q(t)$ and delay embedding $(Q(t-\tau),Q(t))$ for
the non-periodic part of the orbit, and (iv)-(v) the periodic orbit.
}
\label{fig_Transition}
\end{figure}

The Lyapunov exponents could have been obtained with a shorter integration interval; the reason to integrate out to $500$ years was to obtain many crossings of the Poincar\'e section $Q(t)=Q^*$ in order to try to reveal the fractal structure of the attractor. This is difficult to achieve because the mapping between the intersections with the Poincar\'e section is only implicitly defined by the solution of the DDE~\eqref{Qprime} which has to be solved numerically. Nevertheless Figure~\ref{fig5_DdeStemKappaQ0865Torus05}(iv) shows a projection of the crossing of the Poincar\'e section, with insets which reveal some of the fractal structure of the attractor.


If we vary all three of $\kappa$, $\gamma$ and $\tau$, while still holding all the other parameters at their homeostasis values from Table~\ref{tab.model.par}, further interesting chaotic solutions can be found. Figure~\ref{fig_Transition} shows an orbit that appears to display transient chaos.
We interpret this as co-existence of a chaotic invariant set which is not asymptotically stable along with a periodic orbit which is stable. The orbit initially appears to be chaotic with a high-dimensional attractor (see panels (ii) and (iii)) but after about 2850 days transitions to the stable period-doubled periodic orbit which has a period of about $87.75$ days.
This orbit was found by taking parameters close to a point where two period-doubling bifurcation branches cross each other in a  bifurcation diagram on parameter space $(\gamma,\tau)$ (not shown), similar to the diagram from Figure~\ref{fig:GammaTau_2D_Cont} but with $\kappa$ not at its homeostasis value.
If the value of $\kappa$ is changed to $\kappa=0.662$, but all the other parameters are held at their values from Figure~\ref{fig_Transition}, then the chaos becomes persistent. The attractor (not shown) looks very similar to
Figure~\ref{fig_Transition}(iii), but for $\kappa=0.662$ the chaos persists through at least $3\times10^4$ days.
That the attractor is of higher dimension than the previous examples can be inferred by comparing how disordered the two-dimensional projection seen in Figure~\ref{fig_Transition}(ii) looks compared to the previous examples.
The first six Lyapunov exponents are computed numerically to be $+0.02444$, $+0.008055$, $-0.00004119$, $-0.006071$,
$-0.01771$, and $-0.02882$. So for this example there are two positive Lyapunov exponents, the sum of the first five exponents is positive, and the Lyapunov dimension of the attractor is $d=5.3$. This dimension is relatively high compared to our previous examples and many of the classical examples of chaotic attractors in ODEs,
such as the Lorenz attractor~\cite{Lorenz_1963},
for which the dimension is often between $2$ and $3$. However, DDEs define
infinite-dimensional dynamical systems, and it is well-known that they can generate high-dimensional chaotic
attractors~\cite{Longtin1998}.

\subsection{Snaking branch}
\label{sec.snaking}

Continuation in $\tau$ with all the other parameters at their values from Table~\ref{tab.model.par} was illustrated in Section~\ref{sec.1p} (see Figures~\ref{fig:TauCont}-\ref{fig:TauOrb}), and appears to show a canard explosion,
similar to the canard explosion for $\gamma$ continuation, described in Section~\ref{sec.longp.hsc}. Different behaviour is observed
if we vary all three parameters $\gamma$, $\kappa$ and $\tau$.

\begin{figure}[t]
\includegraphics[width=0.49\textwidth,height=44mm]{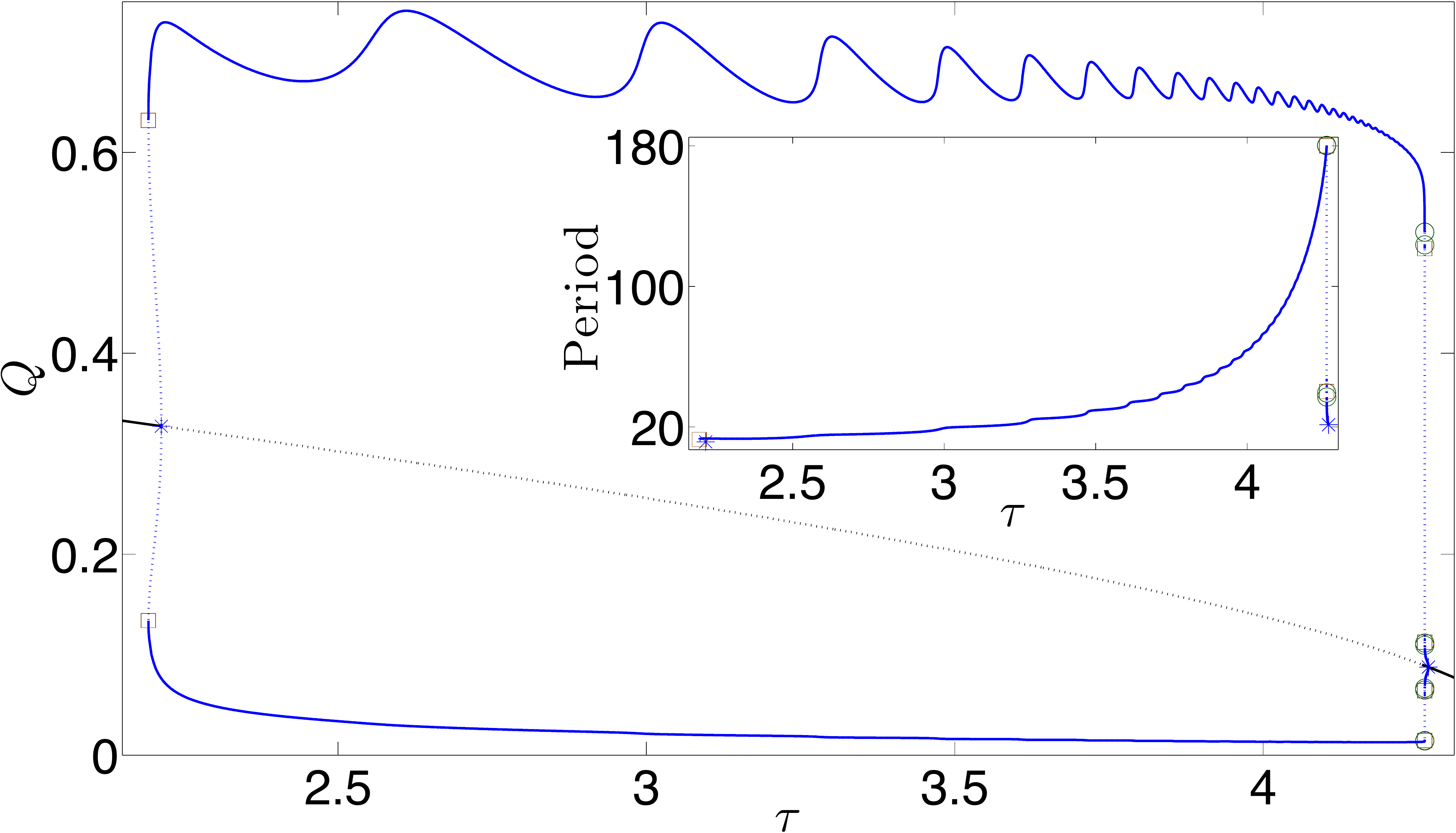}
\put(-245,2.5){\footnotesize{\textit{(i)}}}
\hspace*{1mm}
\includegraphics[width=0.49\textwidth,height=44mm]{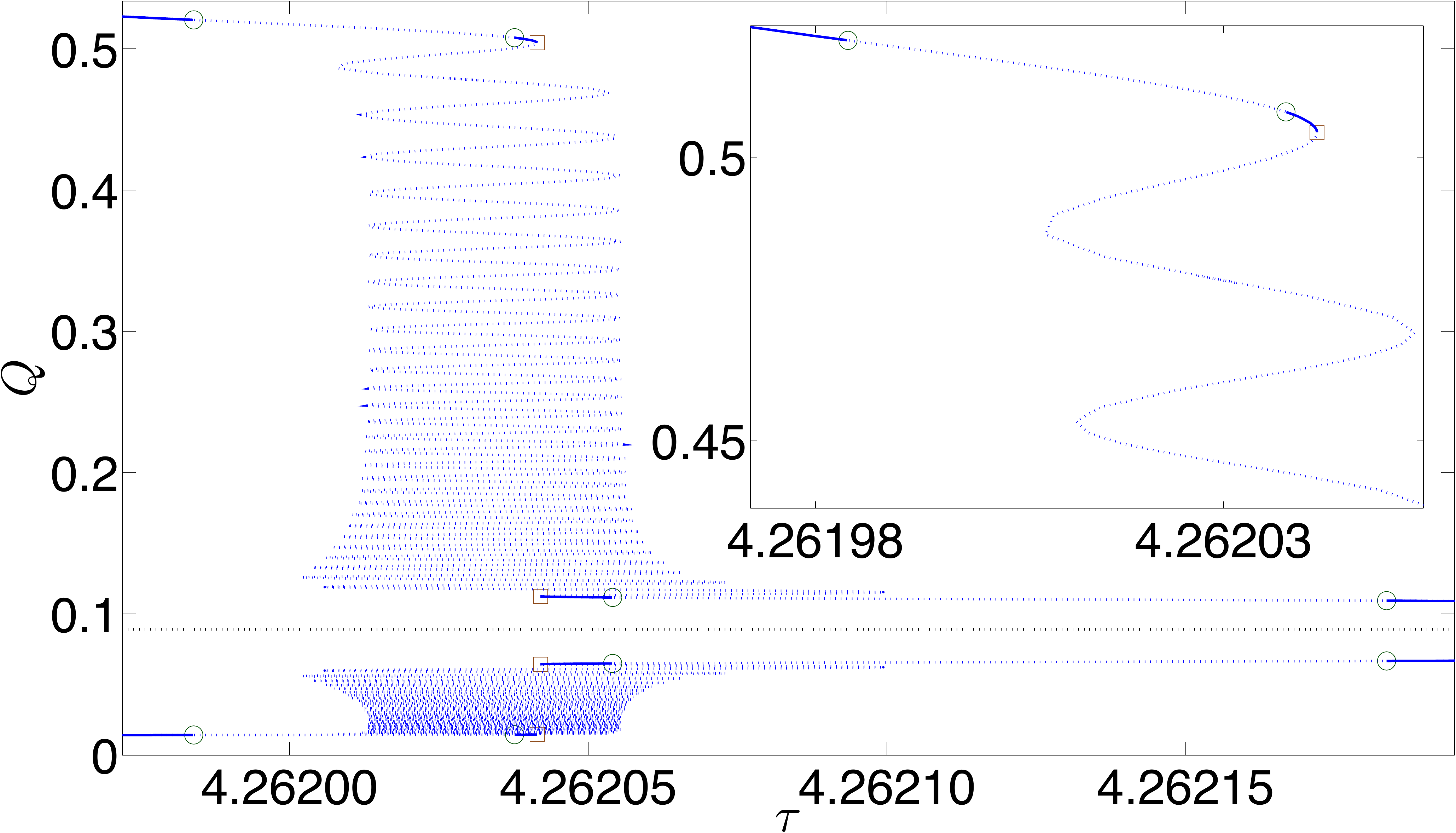}
\put(-245,2.5){\footnotesize{\textit{(ii)}}}

\vspace*{2mm}

\includegraphics[width=0.49\textwidth,height=44mm]{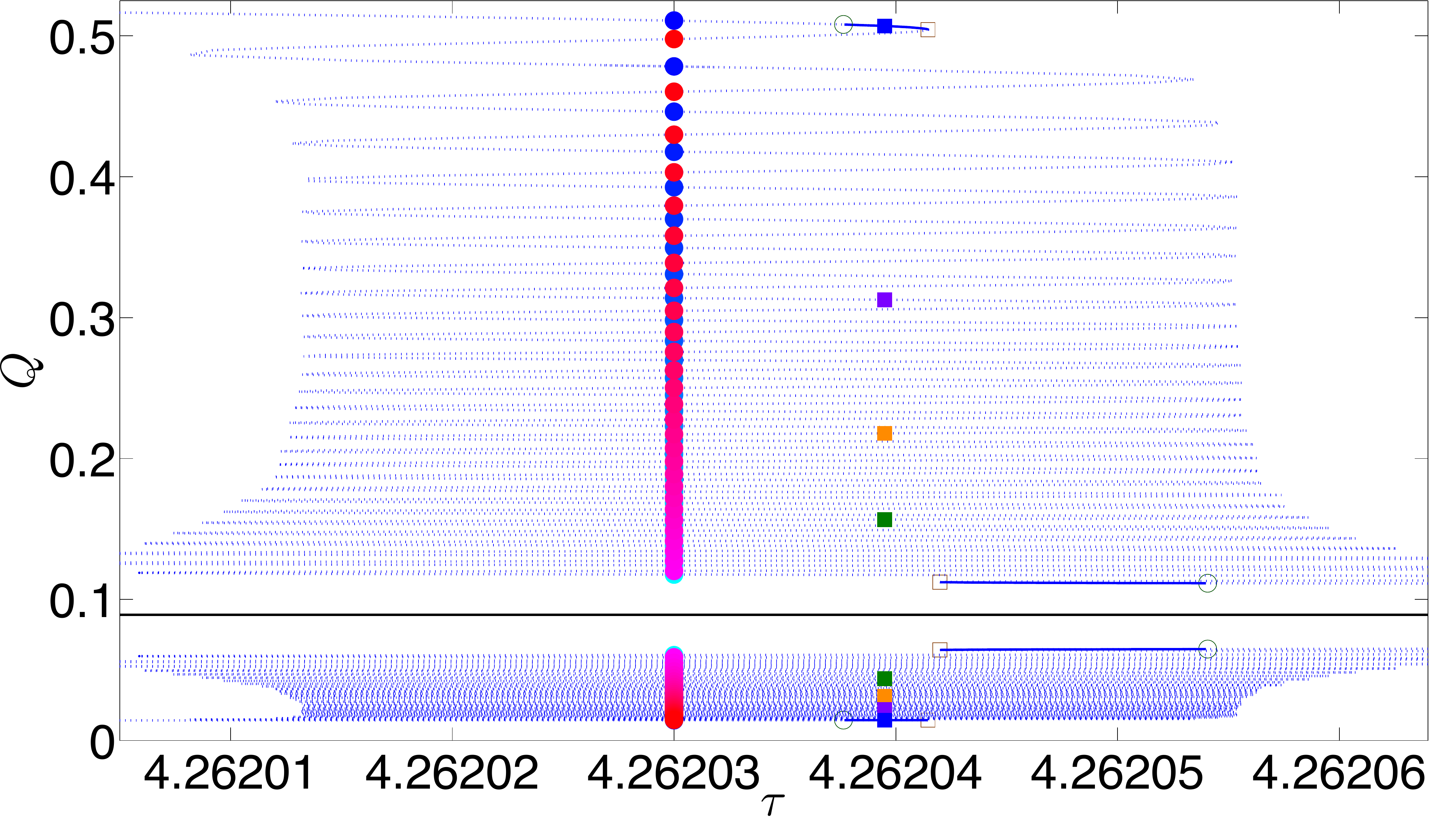}
\put(-245,2.5){\footnotesize{\textit{(iii)}}}
\hspace*{1mm}
\includegraphics[width=0.49\textwidth,height=44mm]{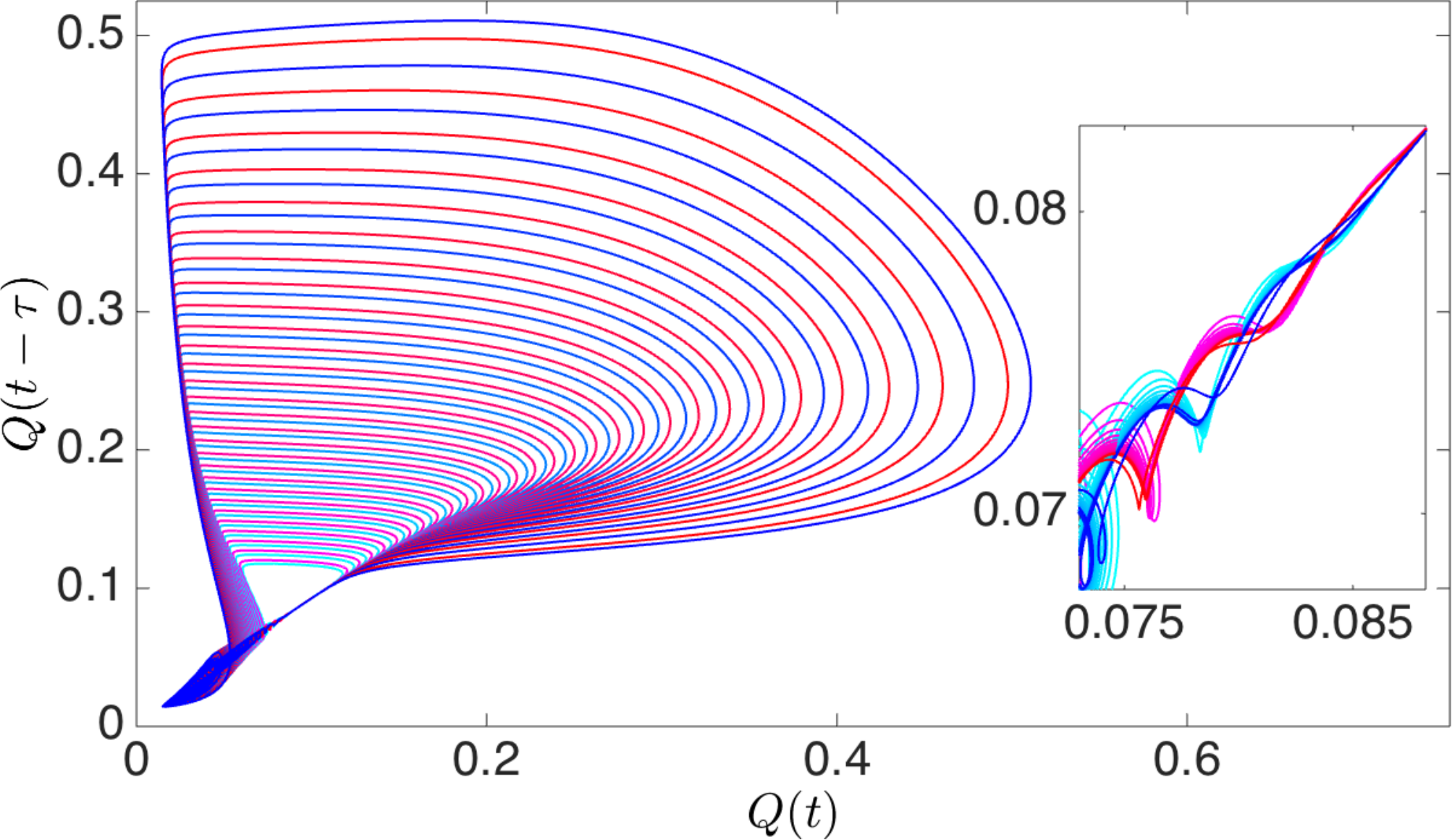}
\put(-245,2.5){\footnotesize{\textit{(iv)}}}

\vspace*{2mm}

\includegraphics[width=0.49\textwidth,height=44mm]{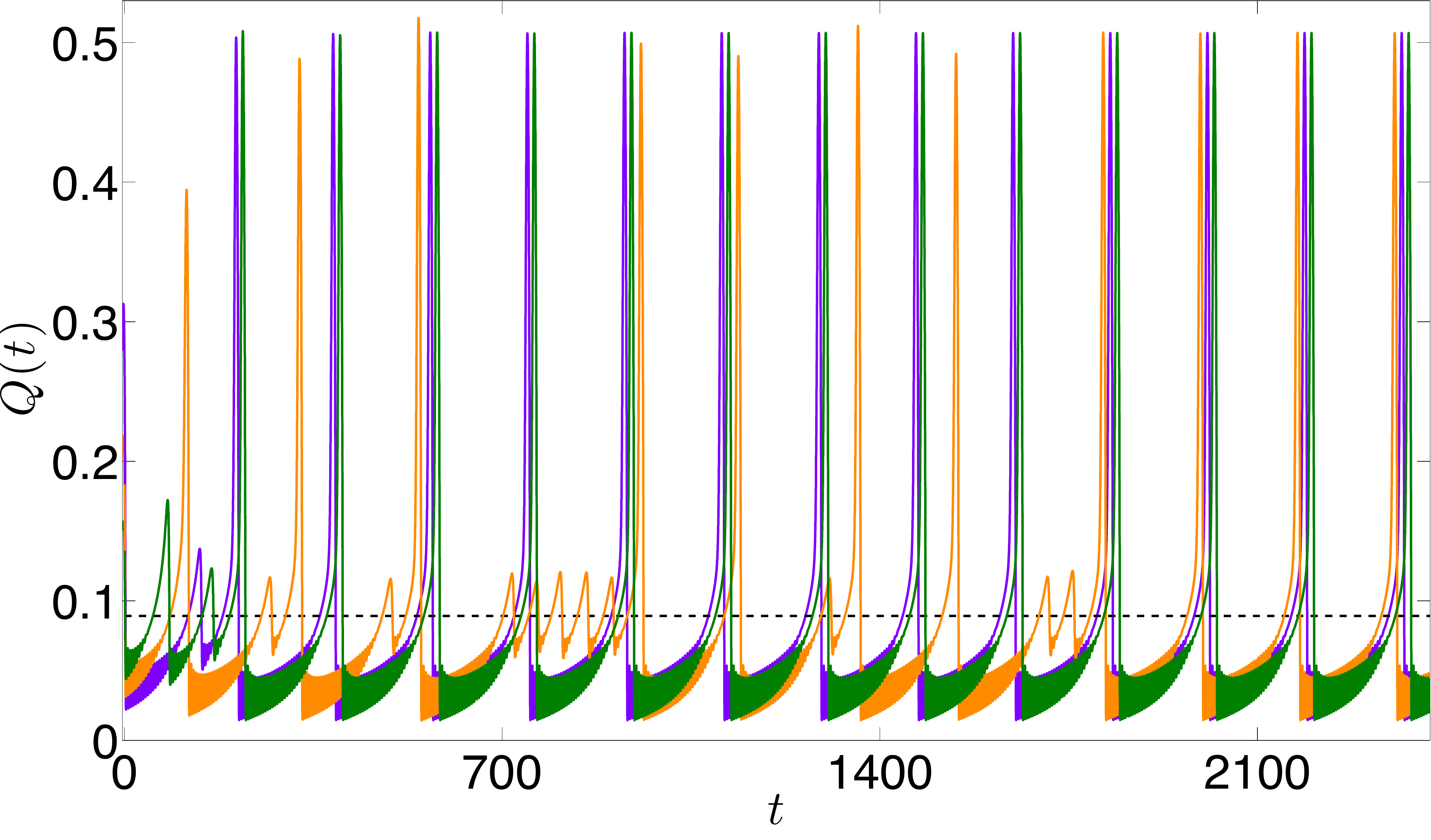}
\put(-245,2.5){\footnotesize{\textit{(v)}}}
\hspace*{1mm}
\includegraphics[width=0.49\textwidth,height=44mm]{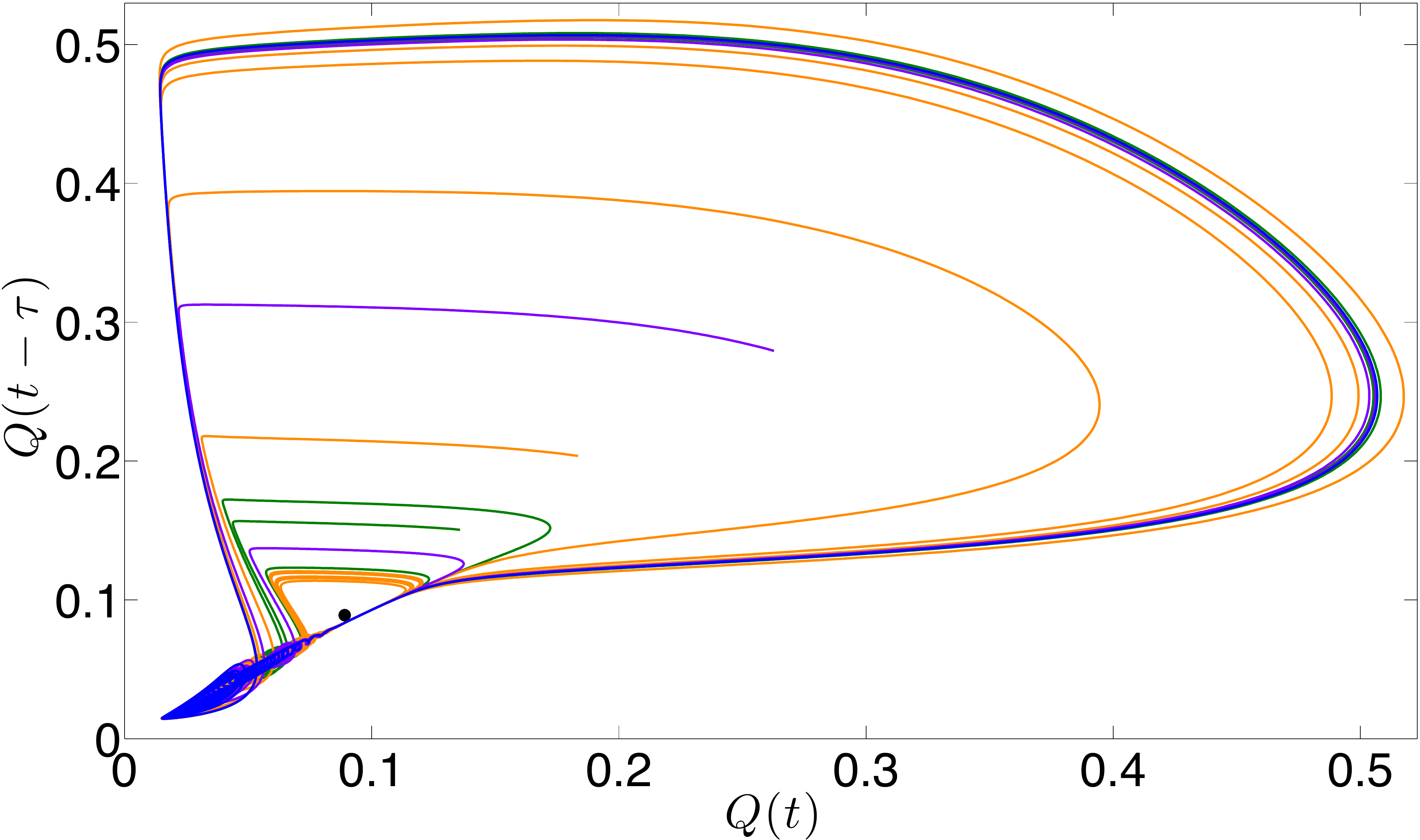}
\put(-245,2.5){\footnotesize{\textit{(vi)}}}
\vspace*{-2mm}
\caption{Parameter continuation in $\tau$ for periodic orbits with $\gamma=0.15$, $\kappa=0.2$.
(i) Bifurcation diagram showing Hopf bifurcations at $\tau=2.21327$ and $4.26817$, fold bifurcation of periodic orbits at $\tau=2.19228$ and $4.262041$, and period-doubling bifurcation of periodic orbits at $4.261983$, $4.262037$, $4.262054$ and $4.262183$.
The inset shows the period of the orbits.
(ii) Details of the snaking-branch region of the bifurcation diagram from panel (i).
(iii) Examples of co-existing periodic orbits for $\tau=4.26203$ and $\tau=4.2620395$.
(iv) Delay embeddings of co-existing periodic orbits for $\tau=4.26203$ and inset showing that the orbits are out of phase.
(v) Three solutions of Eq.~\eqref{Qprime} for $\tau=4.2620395$ computed using the MATLAB \texttt{dde23} routine~\cite{Matlab} with initial functions given by DDEBiftool solutions for the corresponding coloured dots shown in panel (iii). All three orbits converge to the large amplitude stable limit cycle seen in the
bifurcation diagram in panel (iii).
(vi) Delay embeddings of the three orbits shown in panel (v) along with the stable limit cycle to which they converge show in blue, also denoted by the blue square in panel (iii).}
\label{figSnakCont}
\end{figure}

In Figure~\ref{figSnakCont} we present the results of one-parameter continuation in $\tau$
with $\gamma=0.15$, $\kappa=0.2$, and the other parameters at their values from Table~\ref{tab.model.par}.
The bifurcation diagram in Figure~\ref{figSnakCont}(i) appears to show similar behaviour to the earlier $\tau$ continuation, with the steady state stable except between a pair of Hopf bifurcations. There is again a subcritical Hopf bifurcation leading to an interval of bistability between the steady state and a stable limit cycle, and there are again ripples in the amplitude and period of solutions along the branch of stable periodic orbits. The period of the orbits but not the amplitude increases significantly to reach $180$ days just before the period collapses to $21.2$ days at the Hopf bifurcation.

As illustrated in Figure~\ref{figSnakCont}(ii) there is \emph{not} a canard this time.
Instead the bifurcation branch snakes about $28$ times across $\tau=4.262041$ creating a small interval of $\tau$ values for which there are 57 co-existing periodic orbits.
If the periodic orbits had been computed just by simulating to only find the stable solutions, it would appear that the amplitude and period both suddenly increase as $\tau$ is decreased through $4.262041$, suggesting the possibility of a canard explosion. But the DDEBiftool computations, which allow us to compute unstable periodic orbits just as well as stable ones, show this not to be the case.


At the top and bottom of the snake there is a pair
of fold bifurcations of periodic orbits both at $\tau\approx4.262041$, with the $\tau$ values of the bifurcations points agreeing to at least $7$ significant figures. The large amplitude orbit at the top of the snaking branch is stable for very small interval of $\tau$ values ($\tau\in(4.262037,4.262041)$), before losing stability in a period doubling bifurcation at $\tau\approx4.262037$. The small amplitude orbit at the bottom of the snake is stable for $\tau\in(4.262037,4.262054)$, before also losing stability in a period doubling bifurcation at $\tau\approx4.262054$.
We will come back to the dynamics resulting from these period doublings at the end of this section.
On the snaking branch between the two fold bifurcations at $\tau\approx4.262041$ all the periodic orbits are unstable.

Figure~\ref{figSnakCont}(iv) shows the delay embeddings for the 57 unstable limit cycles
that co-exist when $\tau=4.26203$. The positions of these orbits on the snaking branch are indicated on
Figure~\ref{figSnakCont}(iii), where we use shades of pink to red to indicate orbits which are on the legs of the snake for which the amplitude increases as $\tau$ increases, and shades of cyan to blue for orbits on the legs of the branch where the amplitude decreases as $\tau$ increases.
Although Figure~\ref{figSnakCont}(iv) is very reminiscent of Figure~\ref{fig:linearslowman}, there are crucial differences between the dynamics. In particular the orbits shown in Figure~\ref{figSnakCont}(iv) are all unstable and all co-exist, whereas those of Figure~\ref{fig:linearslowman} are stable and exist over an exponentially small parameter interval, with a unique orbit existing for each of the parameter value. Nevertheless, there are significant similarities between the dynamics in the two cases with
Figure~\ref{figSnakCont}(iv) also appearing to indicate the presence of a slow manifold which is stable for a certain range of $Q$ values, with the orbits appearing to spiral onto the slow manifold.
The inset in Figure~\ref{figSnakCont}(iv) shows that the phase of this convergence is different on the two legs of the snaking branch.

In Figure~\ref{figSnakCont}(v)-(vi) we illustrate the dynamics with $\tau= 4.2620395$
when the large amplitude orbit (indicated by the blue dot on Figure~\ref{figSnakCont}(iii)) is stable.
For three different initial functions corresponding to unstable periodic orbits on the snaking branch (also indicated by coloured dots on panel (iii))
we take a part of the periodic orbit
generated by DDEBiftool as the initial function, then use the  MATLAB \texttt{dde23} routine~\cite{Matlab}
to simulate the solution. All three orbits are seen to converge to the stable large amplitude limit cycle, with period about $180$ days, but the nature of that convergence is not simple to explain. All of the periodic orbits along with their unstable manifolds are squeezed very close together when the orbits follow the slow manifold before diverging from each other again when the slow manifold becomes unstable, and probably as a consequence of this the connecting orbits between the limit cycles do not appear to have a simple structure. In the figure we see that the orbit shown in orange passes close to the slow manifold many times before approaching the stable limit cycle, around $t=2000$ days, while the other two initial functions lead to solutions which converge to the stable periodic orbit relatively quickly.

\begin{figure}[t]
\includegraphics[width=0.49\textwidth,height=44mm]{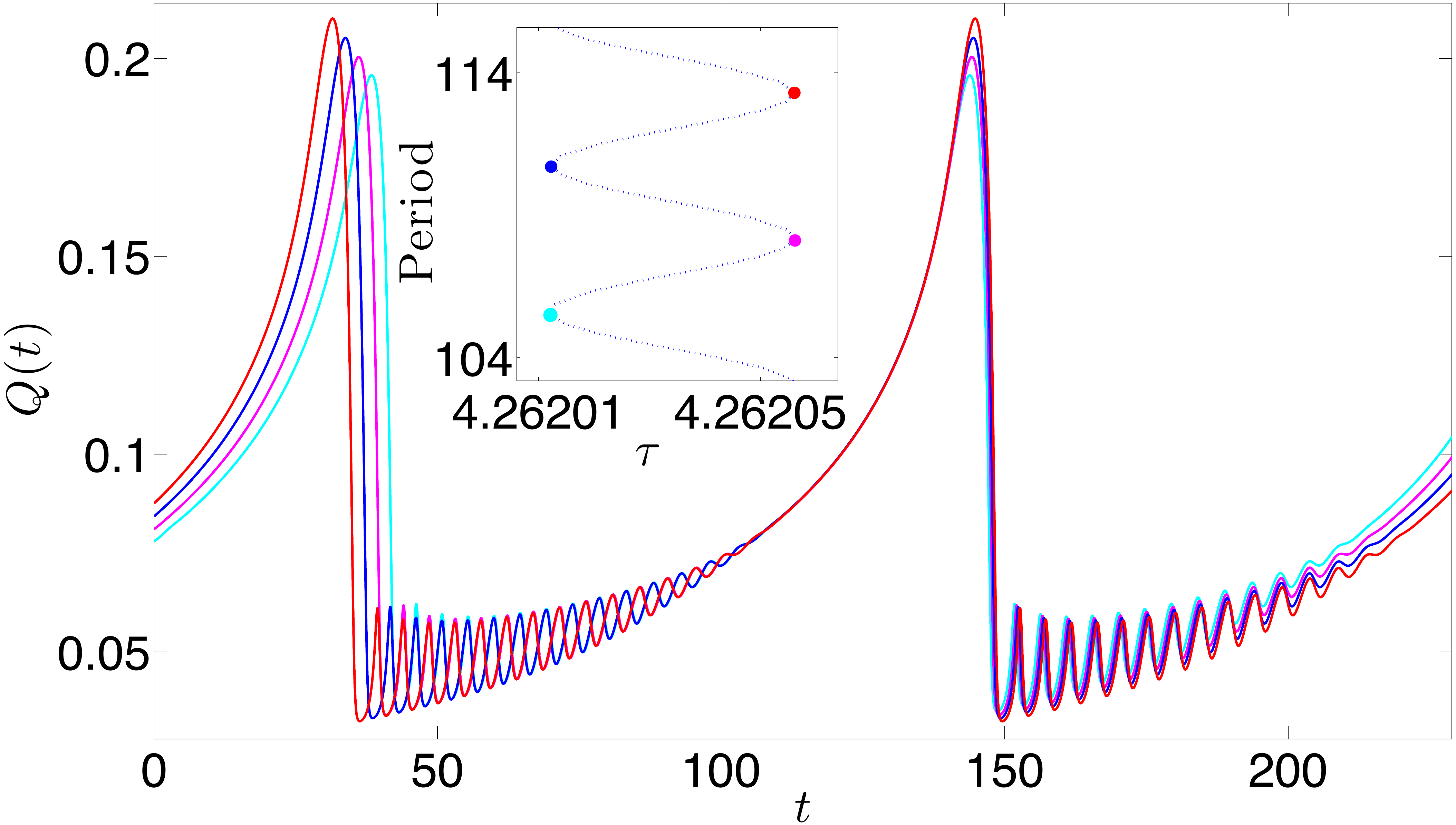}
\put(-245,2.5){\footnotesize{\textit{(i)}}}
\hspace*{1mm}
\includegraphics[width=0.49\textwidth,height=44mm]{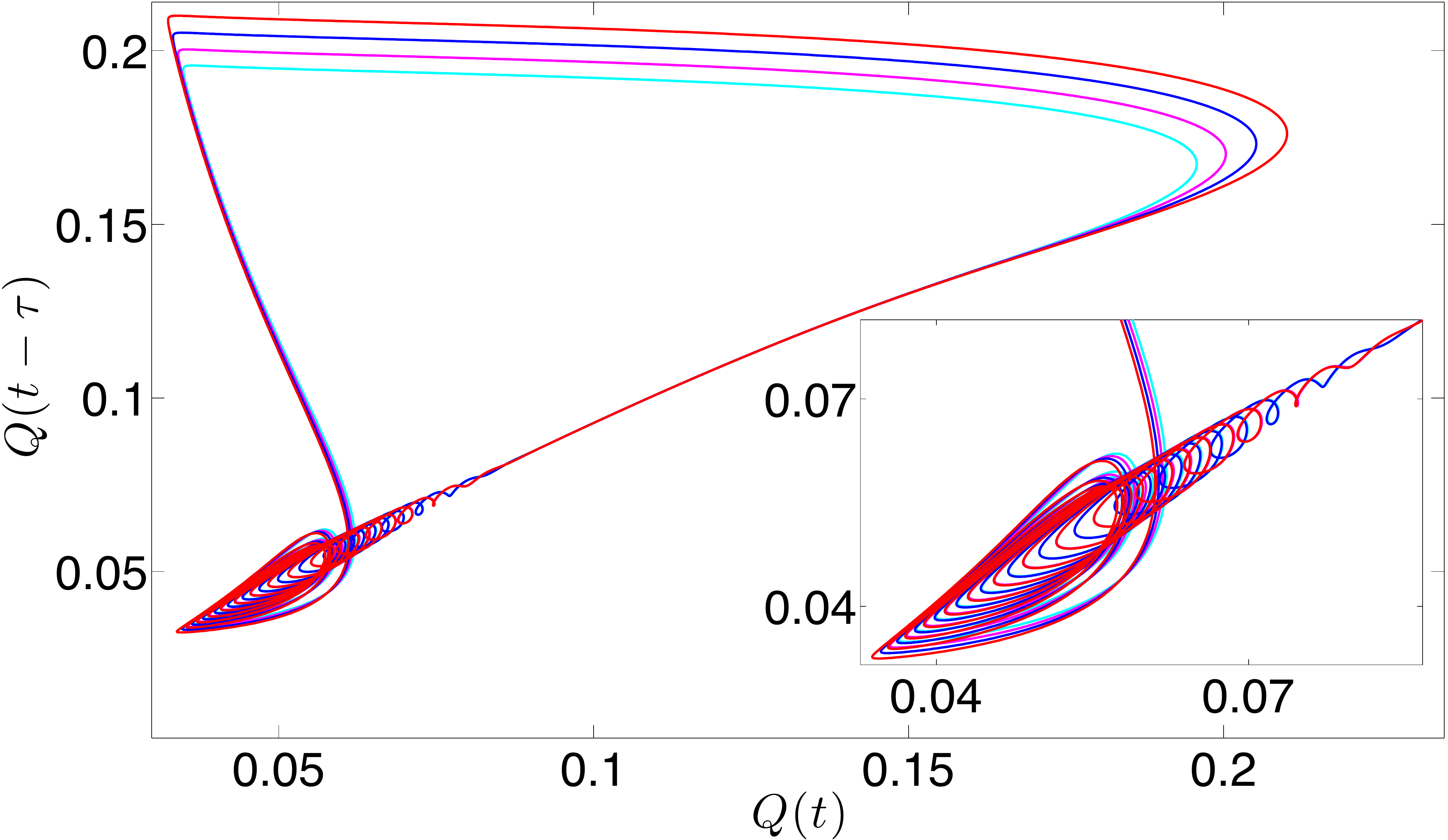}
\put(-245,2.5){\footnotesize{\textit{(ii)}}}
\vspace*{-2mm}
\caption{Example periodic orbits and their delay embedding from the snaking branch shown in Figure~\ref{figSnakCont}. The delay embedding shows that oscillations in the $(Q(t),Q(t-\tau))$ projection appear to be in anti-phase between the left and right sides of the snaking branch. The orbit profiles show the same behaviour when they are plotted with final time points and $Q$ values equal.}
\label{figSnakExtr}
\end{figure}

\def\big{\includegraphics[width=0.87\textwidth]{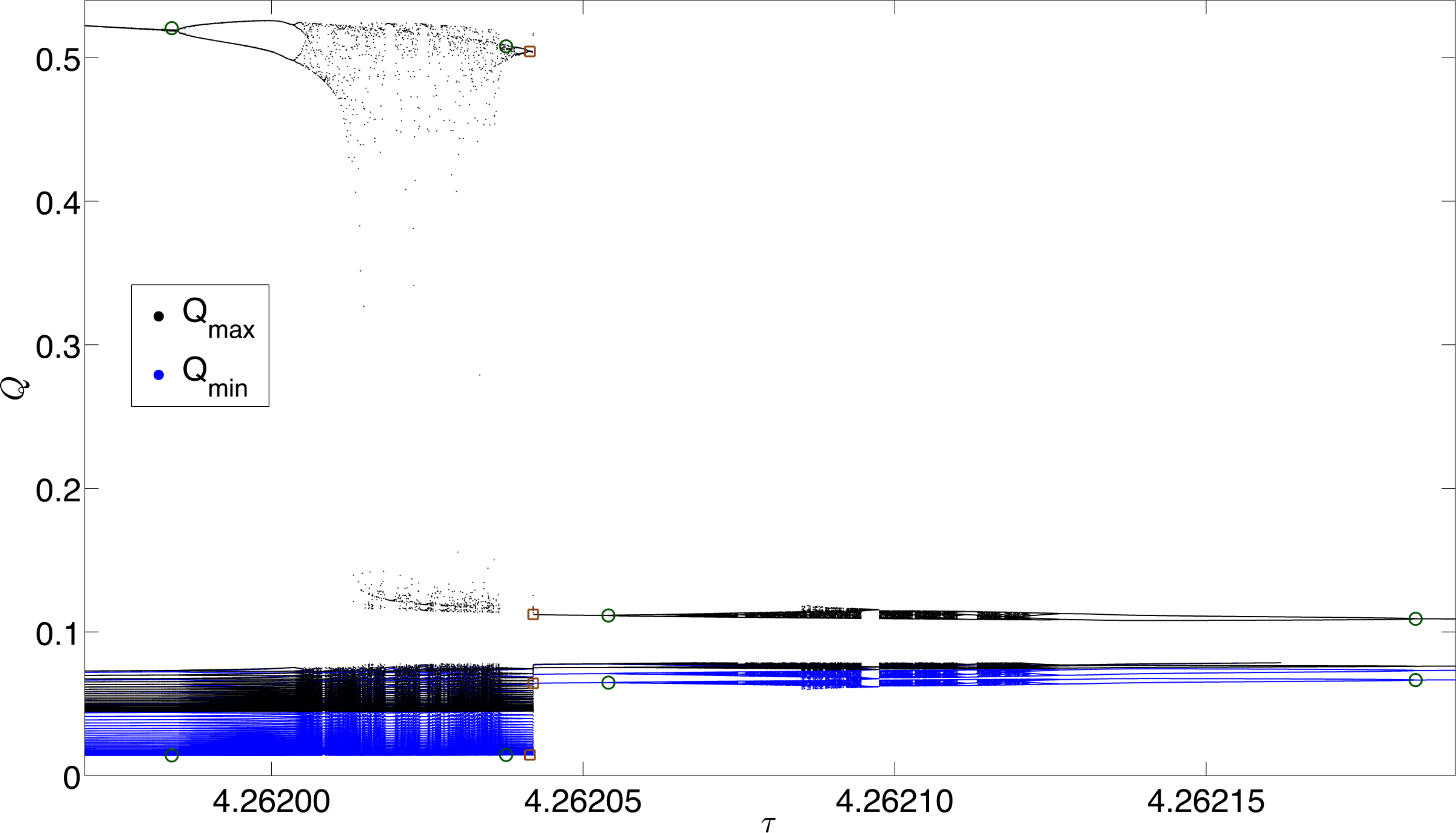}}
\def\little{\includegraphics[height=5.4cm]{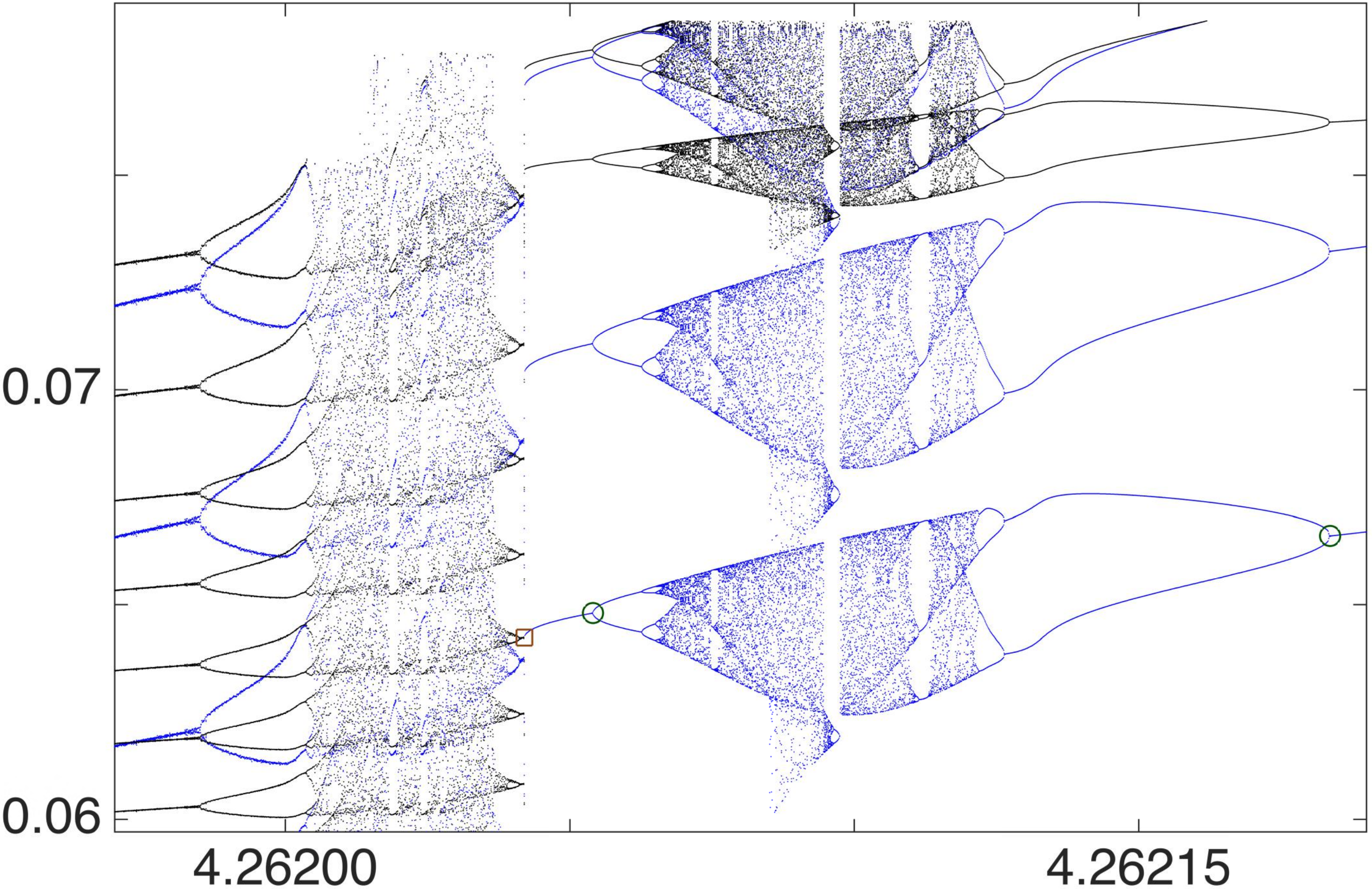}}
\begin{figure}[t]
\centering
\topinset{\little}{\big}{1.5mm}{24.8mm}
\vspace*{-3mm}
\caption{Orbit diagram showing local maxima and minima of solutions segments~\eqref{Qprime} as function of the delay $\tau$,
with $\gamma=0.15$, $\kappa=0.2$ and other parameters taking their values from Table~\ref{tab.model.par}.
Black and blue dots represent respectively the local maxima and minima for a decreasing sequence of $\tau$ values.
The bifurcation points of primary branch are identified using the same symbols as in Figure~\ref{fig:KappaCont} and~\ref{fig2_HSC_Biftool1D_KappaDA}. The inset shows the same interval of $\tau$ values as the main figure but for a restricted range of $Q$ values revealing details of the bifurcations.}
\label{figSnakBif}
\end{figure}

In Figure~\ref{figSnakExtr} we show 4 orbits located at adjacent local extrema of $\tau$ on the snaking branch, as shown in the inset of panel (i). The profiles in panel (i) and delay embeddings in panel (ii) illustrate how the periodic orbit changes along the snaking curve of solutions as the amplitude increases.
In Figure~\ref{figSnakExtr}(i) the phase of the orbits is aligned so that they all have the global maximum and minimum aligned (close to $t=150$). Looking back one period to the previous occurrence of the global maxima and minima, we
see that the position on the snaking branch of solutions is related to the number of short period oscillations seen as the solution converges onto the slow manifold.
Crossing each leg of the snaking branch with increasing amplitude corresponds to adding half a short period oscillation to the whole periodic orbit. So the points on the snaking branch at minima of $\tau$ display one less/more short oscillation than seen at the next minima of $\tau$ directly above/below them on the branch, and half a short period oscillation less/more than seen at the adjacent maxima of $\tau$ with
larger/smaller amplitude.  The time-delay embeddings in Figure~\ref{figSnakExtr}(ii) and its inset reveal that orbits located in the same extrema of the snaking branch converge to the slow manifold in phase with each other,
and in antiphase to orbits located in opposite extrema. Although we have seen how the solution changes along the snaking branch, this does not explain why the branch itself snakes; similar solution behaviour
but without branch snaking
was observed in Section~\ref{sec.longp.hsc} for the canard explosion.

As noted near the beginning of the section, either side of the fold points at $\tau=4.262041$ there are period doubling bifurcations. These bifurcations actually come in pairs, resulting in two separate intervals, one each side of the snaking part of the branch, for which the periodic solutions on the principal branch are unstable. For large amplitude solutions this occurs for
$\tau\in(4.261983,4.262037)$ with a period doubling bifurcation at each end of this interval.
For small amplitude solutions, the unstable part of the branch between the period-doubling bifurcations
is for $\tau\in(4.262054,4.262183)$. To explore the dynamics as $\tau$ is varied over these parameter intervals in Figure~\ref{figSnakBif} we present an orbit diagram showing the local maxima and minima of $Q(t)$ along the solutions of~\eqref{Qprime} as $\tau$ is varied across this region with $\gamma=0.15$, $\kappa=0.2$. This is computed similarly to Figure~\ref{fig.chaos0}, but this time integrating through a transient of $1530\tau$ days, then plotting all the maxima and minima that occur over the next $170\tau$ days.
A mesh of two thousand equally spaced points for $\tau\in[4.26197,4.26219]$ was used for decreasing $\tau$. In this case we did not observe any noticeable hysteresis effects.
For each mesh the solution over the last $\tau$ days was used as the initial history to start the transient computation for the next adjacent $\tau$ value.

The results displayed in Figure~\ref{figSnakBif} clearly reveal the bifurcations already shown in Figure~\ref{figSnakCont}(ii) including the fold bifurcation near
$\tau= 4.262041$ and the period doubling bifurcations at $\tau= 4.261984$, $4.262038$, $4.262054$ and $4.262184$. Between the pairs of period-doublings, much richer dynamics are displayed than we had expected.
Several period-doubling cascades are clearly visible (in the inset to the figure), leading to several intervals of apparently stable chaotic dynamics. There are also windows of stable periodic dynamics, including a period-3 window, which suggests the possibility of unstable chaotic dynamics (period-3 implies chaos only for one-dimensional maps).

\section{Dynamical Diseases}\label{sec.physiol}

In dynamic hematological diseases oscillations are observed
in the circulating concentrations of one or more of the cell lines \cite{Foley_2009a}.
Mathematical interest has often focused on what have been termed periodic hematological disorders, including cyclic neutropenia (CN), cyclic thrombocytopenia (CT) and
periodic chronic myelogenous leukemia (PCML).

CN is one of the most studied of these periodic diseases, with the concentration of circulating neutrophils varying from very low to normal or high levels with a period of about 19 to 21 days~\cite{Bernard_2003,Colijn_2005b}. Patients experience a bout of neutropenia (abnormally low neutrophil concentrations) each period, during which time the immune system is impaired and they are more susceptible to infection~\cite{Dale_2016}.
For patients with CT, oscillations in platelet counts from normal to very low values are observed with periods between 20 to 40 days~\cite{Haurie_1998}.
For patients with PCML, cycling in white blood cells from normal to high levels with periods from approximately 30 to 100 days~\cite{Haurie_1998} and 40 to 80 days~\cite{Pujo_Menjouet_2005} is reported.

Many mathematical models of hematopoiesis have been developed in an effort to understand these diseases and the origins of the oscillatory dynamics \cite{Bernard_2003,Pujo_Menjouet_2004,ZhugeMackeyLeiJTB2019}.
Efforts have often focused on deriving mathematical models and associated parameter sets for which the model has a stable limit cycle with a period commensurate with a particular
disease under consideration \cite{Colijn_2005a,Colijn_2005b,Langlois2017}.
Clinical efforts focus in entirely different directions, typically concentrating on alleviating the cytopenia (dangerously low blood cell concentrations) either by raising the concentration nadir or by decreasing the time interval that concentrations are below the recognised cytopenia threshold. Periodic oscillations in the strict mathematical sense are of limited clinical interest, and in the clinical literature the terms periodic and cyclic are often used as synonyms for episodic, and it is not
implied that the time intervals between episodes are fixed. Consequently, there are many other hematological disorders which at least for some subjects display dynamics with a periodic signature,
but for which there are only scattered case reports of the periodicity.
Examples include cyclic 100-day pancytopenia \cite{Birgens_BJH93},
cyclic (approximately 60 day) bicytopenia with Shapiro syndrome \cite{Roeker_CyclicBicyt_CRH2013},
and Polycyth{\ae}mia Vera \cite{Morley_AAM69} with approximately 28 day cycling.

%
%
%
%
%
%

CT typically involves oscillations of just the platelets~\cite{Foley_2009a}, though one case of multi-lineage CT has recently been reported \cite{Langlois_CCR2018}, while for
CN and PCML oscillations of all of the major blood cell groups are observed~\cite{Foley_2009a}.
This suggests that for CN and PCML the cycling in all cell lineages may be due to a dynamic destabilization at the stem cell level~\cite{Foley_2009a}. This destabilization occurs through different mechanisms in these two diseases with leukemic HSCs typically presenting a chromosome abnormality in PCML~\cite{Pujo_Menjouet_2005}. In CN a mutation in the ELANE gene leads to increased apoptosis in the neutrophil progenitor cells during mitosis~\cite{Dale_2002}, and the destabilization of the HSCs appears to be caused by a feedback mechanism from the neutrophil lineage.

Considerable variation in the oscillatory periods is observed within and between these disorders.
A Lomb periodogram~\cite{Langlois2017} is typically used to extract a periodic signature from the data,
but the data itself is never truly periodic.
There can be many reasons for this including data sampling, measurement error, intrinsic stochasticity of cell proliferation and differentiation, environmental variation, adaptation of the model parameters, or simply that the actual dynamics are not
periodic.

In Table~\ref{tab.model.par} we gave specific values of the model parameters from which we start our bifurcation analysis. Other authors use somewhat different values, or more correctly report ranges for the parameter values~\cite{Bernard_2003,Colijn_2005a}. Through inter-individual variability we should
expect that a single parameter set will not be suitable for all subjects.
However, as seen in Section~\ref{sec.bifurc.hsc} there are no bifurcations near to the stated homeostasis parameters. Hence, using other similar parameter values in the model will also lead to an asymptotically stable steady state.

To provoke a qualitative change in the dynamics of \eqref{Qprime} requires a
large change in the parameters. This situation was already envisioned by
Glass and Mackey \cite{Glass_Mackey_1979}
who coined the term dynamical disease to describe physiological systems where the control system itself is intact, but operating in a parameter range leading to abnormal dynamics.
With significant changes to one or more parameters we do observe non-trivial dynamics.
These dynamics only become of physiological, rather than mathematical, interest when they produce oscillations with characteristics similar to the reported diseases, and we do observe behaviour
reminiscent of CN, PCML and CT.

An increased apoptosis rate $\gamma$ during the cell cycle, as illustrated in
Figures~\ref{fig:GammaCont} and~\ref{fig:GammaOrb} results in stable oscillations in the HSCs of period between about 75 and 100 days for $\gamma\in[0.2278,0.24]$.
The shortest period orbit illustrated in  Figure~\ref{fig:GammaOrb}(i) is of interest.
This has a maximal value of $Q(t)$ greater than 70\% of $Q^h$,
and hence maximum differentiation rate $\kappa Q(t)$ to peripheral blood cell precursors
above 70\% of the homeostatic rate,
while the interval of severely reduced HSC numbers is relatively short (below 4 weeks).
Such cycling in the HSCs would naturally result in pancytopenia in a full model of the hematopoietic system. If the apoptosis rate $\gamma$ is increased slightly above $0.24$ longer periodic orbits result, but as seen from Figure~\ref{fig:GammaOrb}(ii) these have severely reduced HSC numbers for intervals of hundreds of days, which is much longer than the lifespan of circulating erythrocytes and which would induce a fatal anemia. Still higher values of $\gamma$ result in complete depletion of the HSCs with $Q=0$ becoming the globally attracting stable steady state.

In Figures~\ref{fig:KappaCont} and~\ref{KappaCont_Orbits_Bistab_kappa_plots}
we illustrate periodic dynamics of the HSCs for increased values of the differentiation rate $\kappa$. The stable periodic orbits in Figure~\ref{fig:KappaCont} exist when the rate constant $\kappa$ is $6$ or more times its homeostatic value, meaning that in this scenario the rate $\kappa Q(t)$ at which HSCs differentiate to precursors of circulating hematopoietic cells can be elevated compared to the homeostasis value, even when the number of HSCs $Q(t)$ is less than $Q^h$. While the periods observed in
Figure~\ref{fig:KappaCont} are too short for PCML, longer periods of
30-100 days consistent with PCML can be obtained by also increasing the cell cycle time $\tau$ as seen in Figure~\ref{fig2_HSC_Biftool1D_KappaDA}(i)
and  Figure~\ref{figSnakCont}(i)-inset.


The largest periods seen in Figure~\ref{fig:KappaCont}(ii), corresponding to the largest amplitude orbits on the main branch, and also the period-doubled orbits have period about 17 days, which is close to but a little shorter than typical periods for CN. Varying three parameters in
\eqref{Qprime} it is possible to find periodic orbits with periods typical of CN, for example
$(\kappa,\gamma,\tau)\approx(0.09,0.28,3)$ results in stable limit cycles with period between 19 and 21 days.
Stable large amplitude limit cycles are observed in Figure~\ref{fig2_HSC_Biftool1D_KappaDA}
with periods in the 20 to 40 day range typical of CT.

We observed numerous instances of bistability, which allows for the possibility that a
therapeutic intervention or some other outside affect on the
hematopoietic system could cause it to flip between different stable states.
This has been observed in practice, where for example
G-CSF can induce neutrophil oscillations with a period of about 7 to 15 days for neutropenic individuals~\cite{Haurie_1998}.

In Section~\ref{sec.longp.hsc} we explored a canard explosion.
The very long period orbits that we found
are likely not physiologically relevant, as they
include long time intervals during which the HSCs are severely depleted. During these intervals the production of peripheral blood cells would be so severely compromised, that a fatal cytopenia would likely result. Although we do not rule out the possibility that a canard explosion with other parameters
might lead to physiologically feasible long period orbits, the singular parameter $\epsilon$ suggests this is unlikely. We see from \eqref{eq:epspars} and \eqref{Qstarsing} that
$\kappa Q^*\sim\epsilon$, so in the parameter regime $0<\epsilon\ll 1$ where we might expect the canard to exist the differentiation of HSCs towards mature blood cell lines will be severely comprised.

The quasi-periodic and chaotic solutions observed in Section~\ref{sec.chaos.hsc} may be of more physiological relevance for two reasons. Firstly, although these solutions all have significantly reduced HSC concentrations compared to homeostasis, they are found in parameter regions where the differentiation rate $\kappa$ is significantly increased, so that the differentiation $\kappa Q(t)$ out of the HSC compartment is at or above the homeostatic rate when $Q(t)$ is close to a local maxima. These HSC dynamics would likely lead to episodic pancytopenia in a full model of the hematopoietic system, which could be an interesting topic for follow up study. A second reason why these dynamics are of physiological relevance is that they show the system generating non-constant non-periodic dynamics which is more akin to what seen in real data than the purely periodic solutions that we investigated earlier.

In this section we highlighted some of the solutions that we observed with periods in ranges characteristic of dynamical diseases. The two-parameter continuations of Section~\ref{sec.2p}
could be used as a starting point for an extended study to find additional parameter regions with periodic solutions commensurate with dynamical diseases. Although it would be tractable to do that
for the HSC model \eqref{Qprime}, such a study would be more interesting in a
model of the hematopoietic system that incorporates multiple mature cell lines. We have clearly shown that our HSC model can demonstrate the oscillatory dynamics characteristic of dynamical diseases, without the need for any feedback loops from more mature cell lines.
However, many of the solutions with interesting dynamics are associated with an increased differentiation rate $\kappa$. It remains an open question in particular dynamical diseases whether the differentiation rate is actually raised, and if so whether this is intrinsic to the disease-state HSC dynamics, or caused
by feedback from the peripheral blood cell dynamics.


\section{Discussion and Conclusions}\label{sec.conc}

We set out to show that the HSC model~\eqref{Qprime} could generate limit cycles of periods typical in dynamical diseases, simply by changing some of the parameter values in the model. Long period orbits had previously been observed
by varying $s$~\cite{Pujo_Menjouet_2006,Pujo_Menjouet_2005,Pujo_Menjouet_2004}.
We varied the parameters $\gamma$, $\kappa$ and $\tau$ and
found periodic orbits of periods from about one week up to 9 years, encompassing the
19-21 days typical of CN, 20-40 days of CT and the 30-100 days of PCML.
Whereas the model~\eqref{Qprime} treats the HSCs as a single homogeneous population, more recent mathematical models couple multiple copies of~\eqref{Qprime} together~\cite{Adimy_2006a,Qu_2010} to represent the different maturity levels of HSCs, and should be able to generate similar dynamics.

We also observed a plethora of more exotic dynamics including mixed mode oscillations, period-doubling cascades and chaotic solutions.
In Section~\ref{sec.torus} we showed that the DDE~\eqref{Qprime} admits stable torus solutions.
Elsewhere, in Section~\ref{sec.longp.hsc} we studied a putative canard explosion, identified the singular variable, and constructed an approximation to the slow manifold and nearby dynamics. We showed that the
local stability of the slow manifold changes very close to the point where the stability of the critical manifold changes. Our analysis of the canard explosion is incomplete. Established analysis and constructions rely on separating the slow and fast variables \cite{Wech13}. In contrast, equation~\eqref{Qprime} is scalar, and does not have a simple natural separation into fast and slow subsystems. We believe this to be the first demonstration of canard-like behaviour in a scalar system, and a full analysis will require an extension to current theory. In the current work we present a detailed numerical investigation of the phenomenon, with the hope that it will intrigue the theoreticians to complete the analysis.

Equation~\eqref{Qprime} clearly displays mixed mode oscillations
(see the time plots in Figures~\ref{fig:GammaOrb},~\ref{fig:TauOrb},~\ref{fig:GammaCont_Canard}).
Such dynamics are usually associated with slow-fast systems and coupled oscillators, and it is rather curious to see these phenomena in the scalar DDE~\eqref{Qprime}. It is well-known that such dynamics are possible when there are multiple delays, and in the case of two state-dependent delays no other nonlinearity is required other than the state-dependency of the delays~\cite{Calleja_2017}. In that case it seems that essentially the two delay terms interact as if they are coupled oscillators. However, equation~\eqref{Qprime} is scalar with only one delay, and has no Hopf-Hopf bifurcations. Equation~\eqref{Qprime} is in the general class of problems
$$\dot{u}(t)=-u(t) - \alpha h(u(t))+Ah(u(t-\tau),$$
where $h(u)$ is a unimodal function. Problems of this form, have been studied in the case $\alpha=0$, but we are not aware of systemic theoretical studies of the more general case with $\alpha\ne0$. It seems likely to us that the dynamics reminiscent of coupled oscillators are generated by an interaction between the two instances of the nonlinearity evaluated at the current time $t$ and the delayed time $t-\tau$.

We found many examples of bistability in \eqref{Qprime}. These include bistability between pairs of periodic orbits (Figures~\ref{fig:KappaCont},~\ref{fig2_HSC_Biftool1D_KappaDA}),
a periodic orbit and a stable steady state (Figures~\ref{fig:KappaCont},~\ref{fig:GammaCont},~\ref{fig:TauCont},~\ref{fig2_HSC_Biftool1D_KappaDA}),
a periodic orbit and a torus (Figures~\ref{fig2_HSC_Biftool1D_KappaDA},~\ref{fig1_DdeStemKappaQTori01}),
as well as bistability between chaotic and nonchaotic solutions (Figures~\ref{fig.chaos0}).
Bistability of periodic orbits is caused by pairs of fold bifurcations of limit cycle which originate in cusp bifurcations (seen in
Figures~\ref{fig:KappaTau_2D_Cont},~\ref{fig:GammaKappa_2D_Cont}).

We found both subcritical and supercritical Hopf bifurcations and
bistability between a stable limit cycle and a steady state is associated with
the Bautin or generalised Hopf bifurcation (denoted in Figures~\ref{fig:KappaTau_2D_Cont},~\ref{fig:GammaTau_2D_Cont}) where the criticality of the Hopf bifurcation changes. A curve of fold bifurcations of limit cycles emerges from this point which
results in the interval of bistability seen in the one-parameter continuations between the fold and the subcritical Hopf bifurcation.
Few previous studies have been sufficiently detailed to detect the criticality of the Hopf bifurcations, but those that were only found supercritical Hopf bifurcations~\cite{Bernard_2003,Bernard_2004,Milton_1989}, though
Bernard \textit{et al}~\cite{Bernard_2003} did find Hopf bifurcations which were close to a criticality change.

Mathematical studies of differential equations that model hematopoiesis frequently focus on existence
and stability of a nontrivial solution. Once a Hopf bifurcation is found, the steady state becomes unstable, and secondary bifurcations to more complex dynamical structures are often not pursued. On the other hand, peripheral blood samples are often only taken for a few days at a time during a hospital stay, and are otherwise not taken, or taken at widely and irregular spaced intervals. The data, even when well sampled, frequently appears noisy, and it is unheard of to see solutions that are exactly periodic. Often, a periodic signature is only revealed by a frequency test, such as the Lomb periodogram. In this context, the bistable, long period, quasi-periodic, and transient and persistent chaotic dynamics that we find are very interesting. The same individual can present very different looking dynamics during different sampling intervals. It might be that the dynamics are actually periodic, but the period is so long that
different parts of the periodic solution are revealed by different sampling intervals. Another possibility is that the dynamics are actually chaotic (but not random), and different parts of the chaotic attractor are revealed at different times. As we saw in Figure~\ref{fig.chaos1} a time series of chaotic dynamics can appear to be surprisingly regular, while in Figure~\ref{fig.chaos0} where the time series of the dynamics was clearly not regular, the system actually spends significant time near to the attractor of the first example.
In a period doubling cascade to chaos the initial seed orbit and its low order period-doublings continue
to exist after they become unstable, and can be expected to have some organising influence on the structure of the dynamics. Thus it is natural to
expect there to be some periodic signal contained in the time series, even of a chaotic solution, and
it is very unlikely that sufficient blood measurements would be taken from a single subject to discern genuinely chaotic dynamics. A widely used strategy  for determining perturbed parameters associated with dynamical diseases
is to try to find parameters which generate a periodic solution which is closest to the data~\cite{Langlois2017}.
Given the difficulty in distinguishing between chaotic and periodic dynamics, and the ability of the mathematical models to generate both, this strategy may not be optimal, and we should consider that the disease dynamics may not generate a simple periodic orbit, but that  there may be multiple bifurcations between the homeostasis and diseased states, leading to more complex dynamics.
%
\section*{Acknowledgments}
We are grateful to Mike Mackey for useful discussions and feedback on a draft of this work. We also thank John Mitry for our discussions on canards.
We are grateful to Dimitri Breda for sharing his code for the computation of Lyapunov exponents in DDEs.  Finally, we wish to thank the referees and editor for their many constructive suggestions.

DCS was supported by National Council for Scientific and Technological
Development of Brazil (CNPq) postdoctoral fellowship 201105/2014-4.
ARH is supported by a Discovery Grant from the Natural Science and Engineering Research Council (NSERC), Canada.
%
\section*{References}

\end{document}